\documentclass[12pt]{article}
\usepackage[utf8]{inputenc}
\usepackage{graphicx}
\usepackage[percent]{overpic}
\usepackage{xcolor}
\usepackage{natbib}
\usepackage[toc,page]{appendix}
\usepackage{geometry}
\usepackage{microtype}
\usepackage{setspace}
\usepackage{fullpage}
\usepackage{tabularx}
\usepackage{hyperref}%

\usepackage{amsmath, amssymb, amsthm}
\theoremstyle{plain}
\newtheorem{theorem}{Theorem}[section]
\newtheorem{lemma}[theorem]{Lemma}
\newtheorem{proposition}[theorem]{Proposition}
\newtheorem{corollary}[theorem]{Corollary}
\theoremstyle{definition}

\newtheorem{assumption}{Assumption}
\newtheorem{example}[theorem]{Example}

\newtheorem{manualtheoreminner}{Assumption}
\newenvironment{manualtheorem}[1]{%
    \IfBlankTF{#1}
    {\renewcommand{\themanualtheoreminner}{\unskip}}
    {\renewcommand\themanualtheoreminner{#1}}%
  \manualtheoreminner
}{\endmanualtheoreminner}

\theoremstyle{remark}
\newtheorem*{remark}{Remark}

\newcommand{\E}{\mathsf{E}}

\newcommand{\pr}{\mathsf{P}}

\newcommand{\tsp}{\mathrm{\scriptscriptstyle T}}
\newcommand{\rN}{\mathrm{N}}

\newcommand{\emax}{\lambda_{\mathrm{max}}}
\newcommand{\emin}{\lambda_{\mathrm{min}}}

\newcommand{\ncm}{\newcommand}
\ncm{\bfm}[1]{\mbox{\boldmath $#1$}}
\ncm{\sbfm}[1]{\mbox{\scriptsize\boldmath $#1$}}
\ncm{\scr}[1]{\mbox{\scriptsize #1}}
\ncm{\bfmscr}[1]{\mbox{\scriptsize{\boldmath $ #1$}}}

\ncm{\R}{{\mathbb{R}}}
\ncm{\Z}{{\mathbb{Z}}}

\ncm{\cA}{{\cal A}}
\ncm{\cB}{{\cal B}}
\ncm{\cC}{{\cal C}}
\ncm{\calF}{{\cal F}}
\ncm{\cD}{{\cal D}}
\ncm{\cG}{{\cal G}}
\ncm{\cN}{{\cal N}}
\ncm{\cI}{{\cal I}}
\ncm{\cJ}{{\cal J}}
\ncm{\cU}{{\cal U}}
\ncm{\cV}{{\cal V}}
\ncm{\cW}{{\cal W}}
\ncm{\cT}{{\cal T}}
\ncm{\cX}{{\cal X}}
\ncm{\cY}{{\cal Y}}
\ncm{\cZ}{{\cal Z}}
\ncm{\cQ}{{\cal Q}}
\ncm{\cR}{{\cal R}}
\ncm{\cP}{{\cal P}}
\ncm{\cO}{{\cal O}}
\ncm{\cPzer}{{\cal P}_0}
\ncm{\cPone}{{\cal P}_1}
\ncm{\cPk}{{\cP_{\mbox{\scr{known}}}}}
\ncm{\cF}{{\cal F}}
\ncm{\cE}{{\cal E}}
\ncm{\cMD}{{\cal MD}}
\ncm{\tcV}{\tilde{\cal V}}
\ncm{\cCobs}{{\cal C}_{\scr{obs}}}

\ncm{\Om}{\Omega}
\ncm{\om}{\omega}
\ncm{\va}{\varepsilon}
\ncm{\de}{\delta}
\ncm{\De}{\Delta}
\ncm{\ga}{\gamma}
\ncm{\Ga}{\Gamma}
\ncm{\la}{\lambda}
\ncm{\ka}{\kappa}
\ncm{\Si}{\Sigma}
\ncm{\La}{\Lambda}
\ncm{\eps}{\epsilon}

\ncm{\bY}{\bfm{Y}}
\ncm{\bI}{\bfm{I}}
\ncm{\bZ}{\bfm{Z}}
\ncm{\bG}{\bfm{G}}
\ncm{\bT}{\bfm{T}}
\ncm{\bM}{\bfm{M}}
\ncm{\bv}{\bfm{v}}
\ncm{\bw}{\bfm{w}}
\ncm{\bwpr}{\bfm{w}^\prime}
\ncm{\bhp}{\bfm{h}^\prime}
\ncm{\bm}{\bfm{m}}
\ncm{\bh}{\bfm{h}}
\ncm{\bg}{\bfm{g}}
\ncm{\be}{\bfm{e}}
\ncm{\bxi}{\bfm{\xi}}
\ncm{\bom}{\bfm{\om}}

\ncm{\sbM}{\sbfm{M}}
\ncm{\sbw}{\sbfm{w}}

\ncm{\heta}{\hat{\eta}}
\ncm{\hde}{\hat{\delta}}
\ncm{\hth}{\hat{\theta}}
\ncm{\hs}{\hat{s}}
\ncm{\hI}{\hat{I}}
\ncm{\hw}{\hat{w}}
\ncm{\htau}{\hat{\tau}}
\ncm{\hxi}{\hat{\xi}}

\ncm{\mast}{m^\ast}
\ncm{\cast}{c^\ast}
\ncm{\fast}{f^\ast}
\ncm{\siast}{\si^\ast}
\ncm{\psiast}{\psi^\ast}
\ncm{\tsiast}{\tilde{\si}^\ast}
\ncm{\alfast}{\alpha^\ast}
\ncm{\tkaast}{\tilde{\kappa}^\ast}

\ncm{\Depr}{\Delta^\prime}
\ncm{\wpr}{w^\prime}
\ncm{\jp}{j^\prime}
\ncm{\ip}{i^\prime}
\ncm{\Ip}{I^\prime}
\ncm{\kp}{k^\prime}
\ncm{\tpr}{t^\prime}
\ncm{\hp}{h^\prime}
\ncm{\lp}{l^\prime}
\ncm{\np}{n^\prime}
\ncm{\Gp}{G^\prime}
\ncm{\Sp}{S^\prime}
\ncm{\Rp}{R^\prime}
\ncm{\dep}{\delta^\prime}
\ncm{\Mp}{M^\prime}
\ncm{\Cp}{C^\prime}
\ncm{\ap}{a^\prime}
\ncm{\vp}{v^\prime}
\ncm{\up}{u^\prime}
\ncm{\npr}{n^\prime}
\ncm{\xp}{x^\prime}
\ncm{\yp}{y^\prime}
\ncm{\zp}{z^\prime}
\ncm{\phpr}{\phi^\prime}

\ncm{\wbis}{w^{\prime\prime}}

\ncm{\tih}{\tilde{h}}
\ncm{\tZ}{\tilde{Z}}
\ncm{\tY}{\tilde{Y}}
\ncm{\tX}{\tilde{X}}
\ncm{\tz}{\tilde{z}}
\ncm{\tI}{\tilde{I}}
\ncm{\tJ}{\tilde{J}}
\ncm{\tmu}{\tilde{\mu}}
\ncm{\tOm}{\tilde{\Omega}}
\ncm{\tnu}{\tilde{\nu}}
\ncm{\tsi}{\tilde{\sigma}}
\ncm{\tth}{\tilde{\theta}}
\ncm{\tal}{\tilde{\alpha}}
\ncm{\tbeta}{\tilde{\beta}}
\ncm{\tde}{\tilde{\delta}}
\ncm{\txi}{\tilde{\xi}}
\ncm{\tmathV}{\tilde{\V}}
\ncm{\tV}{\tilde{V}}
\ncm{\tW}{\tilde{W}}
\ncm{\ta}{\tilde{a}}
\ncm{\tv}{\tilde{v}}
\ncm{\tw}{\tilde{w}}
\ncm{\tu}{\tilde{u}}
\ncm{\twpr}{\tilde{w}^\prime}
\ncm{\td}{\tilde{d}}
\ncm{\tp}{\tilde{p}}
\ncm{\tf}{\tilde{f}}
\ncm{\tn}{\tilde{n}}
\ncm{\tR}{\tilde{R}}
\ncm{\tS}{\tilde{S}}
\ncm{\tC}{\tilde{C}}
\ncm{\tL}{\tilde{L}}
\ncm{\tl}{\tilde{l}}
\ncm{\tP}{\tilde{P}}
\ncm{\tSmath}{\tilde{\mathbb{S}}}
\ncm{\tT}{\tilde{T}}
\ncm{\tK}{\tilde{K}}
\ncm{\tM}{\tilde{M}}
\ncm{\tka}{\tilde{\ka}}
\ncm{\tva}{\tilde{\va}}
\ncm{\tla}{\tilde{\la}}
\ncm{\tLa}{\tilde{\La}}
\ncm{\tpi}{\tilde{\pi}}
\ncm{\tpsi}{\tilde{\psi}}
\ncm{\tbe}{\tilde{\bfm{e}}}
\ncm{\tbom}{\tilde{\bfm{\om}}}
\ncm{\tbxi}{\tilde{\bfm{\xi}}}

\ncm{\baA}{\bar{A}}
\ncm{\baU}{\bar{U}}
\ncm{\baV}{\bar{V}}
\ncm{\baW}{\bar{W}}
\ncm{\bacW}{\bar{\cW}}
\ncm{\bacV}{\bar{\cV}}
\ncm{\bau}{\bar{u}}
\ncm{\bav}{\bar{v}}
\ncm{\baf}{\bar{f}}
\ncm{\baw}{\bar{w}}
\ncm{\baZ}{\bar{Z}}
\ncm{\baY}{\bar{Y}}
\ncm{\baS}{\bar{S}}
\ncm{\baH}{\bar{H}}
\ncm{\baI}{\bar{I}}
\ncm{\baD}{\bar{D}}
\ncm{\baC}{\bar{C}}
\ncm{\bah}{\bar{h}}
\ncm{\bal}{\bar{l}}
\ncm{\bam}{\bar{m}}
\ncm{\bae}{\bar{e}}
\ncm{\bacR}{\bar{{\cal R}}}
\ncm{\bacP}{\bar{{\cal P}}}
\ncm{\baka}{\bar{\kappa}}
\ncm{\bamu}{\bar{\mu}}
\ncm{\banu}{\bar{\nu}}
\ncm{\bala}{\bar{\la}}
\ncm{\bachi}{\bar{\chi}}
\ncm{\baga}{\bar{\ga}}
\ncm{\bapsi}{\bar{\psi}}

\ncm{\chnu}{\check{\nu}}
\ncm{\chla}{\check{\la}}
\ncm{\chLa}{\check{\La}}
\ncm{\chZ}{\check{Z}}
\ncm{\chY}{\check{Y}}
\ncm{\chX}{\check{X}}

\ncm{\rmp}{\mbox{p}}
\ncm{\rmq}{\mbox{q}}

\ncm{\scrp}{\scr{p}}
\ncm{\scrq}{\scr{q}}

\ncm{\Leq}{\, \stackrel{\cal L} =}
\ncm{\Lto}{\, \stackrel{\cal L} \longrightarrow}
\ncm{\pto}{\, \stackrel{p} \longrightarrow}
\ncm{\asto}{\, \stackrel{\rm a.s.} \longrightarrow}
\ncm{\Cov}{\mbox{Cov}}
\ncm{\Var}{\mbox{Var}}
\ncm{\sameord}{\stackrel{\cup}{{\scriptstyle \cap}}}

\ncm{\ith}{i^{\scr{th}}}
\ncm{\jth}{j^{\scr{th}}}
\ncm{\kth}{k^{\scr{th}}}
\ncm{\lth}{l^{\scr{th}}}
\ncm{\bth}{b^{\scr{th}}}
\ncm{\Id}{\mbox{Id}}
\ncm{\hId}{\widehat{\mbox{Id}}}
\ncm{\IBD}{\mbox{IBD}}
\ncm{\kamin}{\kappa_{\scr{min}}}
\ncm{\kamax}{\kappa_{\scr{max}}}
\ncm{\Ts}{\T_s}
\ncm{\Spairs}{S_{\scr{pairs}}}
\ncm{\Sgprs}{S_{\scr{g-prs}}}

\ncm{\beq}{\begin{equation}}
\ncm{\eeq}{\end{equation}}
\ncm{\beqr}{\begin{eqnarray}}
\ncm{\eeqr}{\end{eqnarray}}
\ncm{\beqrn}{\begin{eqnarray*}}
\ncm{\eeqrn}{\end{eqnarray*}}
\ncm\rthm[1]{\ref{#1}}
\ncm\lb[1]{\label{#1}}
\ncm\re[1]{(\ref{#1})}
\ncm{\slut}{
  {\unskip\nobreak\hfill\penalty100\hskip1em\vadjust{}\nobreak
  \hfill\mbox{$\Box$}\parfillskip=0pt\finalhyphendemerits=0}}

\title{Asymptotics for likelihood ratio tests of boundary points with singular information and unidentifiable nuisance parameters}
\author{Karl Oskar Ekvall$^{\dagger,\ddagger}$, Ola H\"ossjer$^{\star}$, Matteo Bottai$^{\ddagger}$, J.M. Patrik Albin$^{\mathsection}$ \\
{\normalsize $^\dagger$Department of Statistics, University of Florida} \\
{\normalsize $^\star$Department of Mathematics, Stockholm University} \\
{\normalsize $^\ddagger$Division of Biostatistics, Institute of Environmental
Medicine, Karolinska Institutet}\\
{\normalsize $^\mathsection$Department of Mathematical Sciences, Chalmers University of Technology}}
\date{\normalsize \today}
\date{\today} 

\begin{document}

\maketitle

\begin{abstract}
We establish the asymptotic distribution of likelihood ratio tests (LRTs) in
settings where some of the nuisance parameters are unidentifiable under the null
hypothesis, parameters of interest lie on the boundary of the parameter space,
and the information matrix of the identifiable parameters may be singular. Our
work is motivated by mixture models and genetic linkage analysis, which exhibit
all three features simultaneously, but it is applicable more broadly to other
problems such as change-point detection. Under suitable regularity conditions,
the asymptotic distribution of the LRT statistic under the null hypothesis is
the supremum of a $\bar{\chi}^2$-process, that is, a stochastic process whose
marginal distributions are mixtures of $\chi^2$-distributions with weights
depending on the nuisance parameter. Under local alternatives, the asymptotic
distribution of the LRT statistic is the supremum of a noncentral
$\bar{\chi}^2$-process, whose marginal distributions are mixtures of truncated,
noncentral $\chi^2$-distributions. In contrast to prior work on singular
information, where singularity stems from the parameter of interest and changes
the form of the limit distribution, here singularity is determined by the
nuisance parameter and the limit has the same form as in the nonsingular case.
Existing results for boundary inference with nonsingular information or without
nuisance parameters are obtained as special cases, and several existing
application-specific results for mixture models and genetic linkage analysis are
recovered and extended.
\end{abstract}

\section{Introduction}\lb{Sec:Intro}

Under regularity conditions, the likelihood ratio test (LRT) statistic has an
asymptotic $\chi_p^2$-distribution under the null hypothesis, with the degrees
of freedom $p$ determined by the number of restrictions imposed by the null
\citep{wilks1938largesample}. In many applications, however, such regularity
conditions do not hold and, consequently, inference based on the
$\chi_p^2$-distribution is unreliable. We consider settings in which three
nonregularities can be present simultaneously: (i) some nuisance parameters are
unidentifiable under the null hypothesis; (ii)
parameters of interest lie on the boundary of the parameter space under the
null; and (iii) the information matrix for the parameters of interest may be
singular at some values of the nuisance parameter. As we discuss in more detail
shortly, these issues have been studied separately in the literature and in
particular models, but not jointly in a general parametric or semi-parametric
framework.

Important examples include LRTs for mixture models \citep{daga1999,
chen2001large, chen2003tests} and genetic linkage analysis
\citep{sham1998statistics}. In mixture models, parameters of a component
distribution are unidentifiable under a null hypothesis assigning zero weight to
that component. Moreover, the mixing distribution may only be known to belong to
an infinite-dimensional family. In genetic linkage analysis, the null hypothesis
is that no disease gene is present along a chromosome of length $T$; the
alternative is that a gene is located at some position $\tau \in [0,T]$. The
gene location $\tau$ is then unidentifiable under the null. The parameter of
interest is the genetic variance attributable to the disease gene: it is
nonnegative and equals zero under the null, placing it on the boundary of the
parameter space, and the Fisher information can be singular. Further details on
these examples, including references to previous work, are in Sections
\ref{sec:mix_ex} and \ref{sec:linkage}. Other problems for which the LRT
statistic exhibits nonstandard asymptotics include testing the number of
components in segmented regression \citep{fe1975, da1987}, the closely related
problem of testing whether a change point exists \citep{anpl1995}, testing for
a threshold effect in autoregression \citep{anpl1995}, testing the relevance of
some explanatory variables in nonlinear regression \citep{anpl1995}, detection
of a discrete frequency component of time series \citep{da1987}, and testing
the order of an ARMA time series \citep{daga1999}.

Our main results give conditions under which the likelihood ratio statistic
converges in law to the supremum of a stochastic process, and we characterize
that limiting process. Under the null hypothesis and appropriate regularity
conditions, this process is a $\bar{\chi}^2$-process, with marginal
distributions that are mixtures of $\chi^2$-distributions and weights determined
by the local geometry of the parameter space. Under contiguous (local)
alternatives, the process is a noncentral $\bar{\chi}^2$-process, with marginal
distributions that are mixtures of truncated noncentral $\chi^2$-distributions,
with weights not only determined by the local geometry of the parameter space,
but also by the sequence of local alternatives.

The closest existing results are those of \citet{andrews2001testing}, which
concern boundary inference with nonsingular information and nulls that fix a
parameter subvector to a point; we allow both singular information and more
general composite null hypotheses. Even without these generalizations, however,
the settings are complementary rather than nested. Our findings also recover, as
special cases, results obtained by removing some of the complicating features.
This includes classical results for boundary inference without nuisance
parameters
\citep{chernoff1954distribution,self1987asymptotic,geyer1994asymptotics}. In
addition, our framework incorporates results for testing whether a
single parameter of interest is zero or positive, in the presence of nuisance
parameters, when either all \citep{da1977, da1987} or some \citep{risk2005} of
these nuisance parameters are unidentifiable under the null hypothesis. 

The source of singularity here differs from that in
\citet{rotnitzky2000likelihoodbased}, \citet{ekvall2022confidence}, \citet{bottai2003confidence},
and \citet{guedon2024bootstrap}. There, the rank
of the information matrix is determined by the parameter of interest, whereas
here it is determined by the nuisance parameter. In
\citet{rotnitzky2000likelihoodbased}, singularity fundamentally changes the form
of the asymptotic distribution, the likelihood is bimodal near the singular
point, and only information matrices of rank one less than full are treated.
Here, the rank at singular points is arbitrary and, under smoothness
assumptions, the limit distribution has the same form as in the nonsingular
case; the singularity complicates the analysis but not the conclusion.


\section{Quadratic Approximation and \texorpdfstring{$\bar{\chi}^2$}{chi-bar-square} Process}\lb{Sec:QA}

We assume an underlying probability space $(\Omega, \mathcal{S}, \pr)$ and call
any subset of $\Omega$ an event, whether it is measurable or not. Similarly, we
call any function mapping elements of $\Omega$ to some set a random element of
that set. In particular, let $\ell_n$ be a random real-valued function with
domain $\Theta$; that is, for every $\omega \in \Omega$, $\ell_n(\cdot) =
\ell_n(\cdot; \omega)$ is a real-valued function on $\Theta$ (the parameter space). Assume $\theta \in
\Theta$ satisfies $\theta = (\xi, t)$ for $t$ in some arbitrary set
$\mathcal{T}$ and $\xi \in \Xi(t) \subseteq \R^{p}$. For now $\ell_n$ can be an
arbitrary objective function, but later we assume it is a log-likelihood, that $\xi$
is a parameter of interest, and that $t$ is a nuisance parameter.

A key object in our theory is the stochastic process $X_n$ indexed by
$\mathcal{T}$ and defined by
\begin{equation} \label{eq:X_n-process}
    X_n(t) = \sup_{\xi \in \Xi(t)}2\{\ell_n(\xi, t) - \ell_n(\xi_0, t)\},
\end{equation}
where $\xi_0$ is a particular value of $\xi$. Later, $\xi_0$ will be the null
value of $\xi$. This corresponds to hypothesis testing with $H_0: \xi = \xi_0$ versus $H_1: \xi \in \Xi \setminus \{\xi_0\}$, where $\Xi = \cup_{t\in\cT}\Xi(t)$. We will also assume that the nuisance parameter $t$ is
unidentifiable under the null, meaning that $\ell_n(\xi_0, t)$ does not depend
on $t$. Then, the likelihood ratio test statistic for testing $H_0$ against $H_1$ is
\begin{align} \label{eq:lrt_stat}
    \Lambda_n = \sup_{t \in \mathcal{T}} X_n(t) .
\end{align}
First, however, we discuss the process $X_n$ when $\ell_n$ is not necessarily
the log-likelihood and $\Lambda_n$ not necessarily a LRT statistic. For each $t
\in \mathcal{T}$ and $n \in \{1, 2,\dots\}$, let $U_n(t)$ denote a random vector
and let $I(t) \in \R^{p\times p}$ be deterministic and positive definite.
Consider a quadratic expansion 
\begin{align} \label{eq:ln_expand}
    \ell_n(\xi, t) - \ell_n(\xi_0,t) &=  \sqrt{n}(\xi - \xi_0)^\tsp  U_n(t) - \frac{n}{2} (\xi - \xi_0)^\tsp I(t) (\xi - \xi_0) + n\Vert I(t)^{1/2} (\xi - \xi_0)\Vert_2^2 R_n(\xi, t),
\end{align}
where ${}^\tsp$ denotes transpose, $\Vert \xi\Vert_2^2 = \xi^\tsp \xi $ is the
squared Euclidean norm in $\R^p$, and $R_n(\xi, t)$ is a remainder term to be
specified. We call $U_n$ and $I$ a generalized score and information matrix,
respectively. When $\ell_n$ is a log-likelihood, they can often be the usual
score and Fisher information matrix for $\xi$, evaluated at $\xi_0$. In that
case, for a fixed $t$, \eqref{eq:ln_expand} is the usual quadratic expansion
used to derive the asymptotic distribution of the likelihood ratio test
\citep[see, for example,][]{serfling2002approximation}. Here, by contrast, it
will be important to control the behavior of $U_n$, $I$, and $R_n$ as elements
of appropriate spaces of functions on $\mathcal{T}$. Moreover, we will later
consider cases where $I(t)$ is singular for some $t$.

Put $\delta = n^{1/2}A(t)^\tsp(\xi - \xi_0)$ and $\tilde{\ell}_n(\delta, t) =
\ell_n(\xi_0 + n^{-1/2}A(t)^{-\tsp}\delta, t)$, where $A(t)A(t)^\tsp = I(t)$.
For example, $A(t)$ can be a Cholesky root or the symmetric matrix with the same
eigenvectors as $I(t)$ and square-root eigenvalues. It follows that the set of
possible values of $\delta$ is $\Delta_n(t) = n^{1/2}A(t)^\tsp(\Xi(t)-\xi_0)$.
Let also $Z_n(t) = A(t)^{-1}U_n(t)$ and $Q_n(\delta, t) = 2\delta^\tsp Z_n(t) -
\Vert \delta\Vert_2^2$. Then, by \eqref{eq:ln_expand}, the function
$\tilde{\ell}_n$ defined by $\tilde{\ell}_n(\delta, t) = \ell_n(\xi_0 +
n^{-1/2}A(t)^{-\tsp}\delta, t)$ satisfies
\begin{equation} \label{eq:ln_tilde_expand}
    2\{\tilde{\ell}_n(\delta, t) - \tilde{\ell}_n(0, t)\} = Q_n(\delta, t) + 2\Vert \delta\Vert_2^2 \tilde{R}_n(\delta, t),
\end{equation}
where $\tilde{R}_n(\delta, t) = R_n(\xi_0 + n^{-1/2}A(t)^{-\tsp}\delta, t)$. Thus,
\begin{equation} \label{eq:X_n-process-delta}
X_n(t) =  \sup_{\delta \in \Delta_n(t)} \left\{ Q_n(\delta, t) + 2\Vert \delta\Vert_2^2 \tilde{R}_n(\delta, t)\right\}.
\end{equation}
Under conditions in the next section, $Z_n \Lto Z$ and $\Delta_n(t) \to
\Delta(t)$, from which $X_n \Lto X$ follows. The limit set $\Delta(t)$ encodes
the boundary constraints on $\xi$: if $\xi_0$ is in the interior of $\Xi(t)$,
then $\Delta(t) = \R^p$, whereas if $\xi_0$ lies on the boundary, $\Delta(t)$ is
typically a proper convex cone. For each $t$, the limit process satisfies
\begin{equation}\label{eq:X-process}
    X(t) = \sup_{\delta \in \Delta(t)} Q(\delta, t);~~  Q(\delta, t) = 2\delta^\tsp Z(t) - \Vert \delta\Vert_2^2.
\end{equation}
Intuitively, then, the distribution of $\Lambda_n$ will be close to that of
\beq
\Lambda = \sup_{t \in \mathcal{T}} X(t)
\lb{La}
\eeq
if, as functions on $\mathcal{T}$, $Z_n$, $\Delta_n$, and $\tilde{R}_n(\delta, \cdot)$ are close to $Z$, $\Delta$,
and $0$, respectively. Under certain conditions, we have
\begin{align}\lb{XtProj}
X(t) = \Vert Z(t)\Vert_2^2 - \inf_{\delta \in \Delta(t)}\Vert Z(t) - 
\delta\Vert_2^2 = \Vert Z(t)\Vert_2^2 - 
\Vert Z(t) - P_{\Delta(t)} Z(t)\Vert_2^2 = \Vert P_{\Delta(t)} Z(t)\Vert_2^2.
\end{align}
Specifically, the first equality is always true, while the second holds if the
infimum is attained by some $\delta \in \Delta(t)$, in which case it defines the
projection $P_{\Delta(t)}$. If $\Delta(t)$ is closed, the projection exists and
is unique Lebesgue-almost everywhere \citep[Proposition
4.2]{geyer1994asymptotics}. The third equality holds if $\Delta(t)$ is a closed,
convex cone. 

Suppose in addition that $Z$ is a centered Gaussian process whose marginal
covariance matrix is the identity matrix $\Id_p$ of order $p$ for all $t\in\cT$.
This will typically be the case when $\ell_n(\xi,t)$ is a log-likelihood and the
null hypothesis $\xi=\xi_0$ holds. Then, for each fixed $t$, $X(t)$ has a
$\bar{\chi}^2$-distribution \citep{shapiro1985asymptotic,shapiro1988unified}.
That is, for some nonnegative weights $w_0(t), \dots, w_p(t)$ summing to one
(whose values depend on $\Delta(t)$)
\beq
X(t) \sim \sum_{j=0}^p w_j(t) \chi_j^2.
\lb{chibar}
\eeq
Then, we say $X$ is a $\bar{\chi}^2$-process indexed by $\mathcal{T}$.
Now suppose $\ell_n(\xi,t)$ is a log-likelihood and consider a sequence of
contiguous (or local) alternatives $\theta_n=(\xi_n,t_0)$ with $\xi_n\to\xi_0$
at an appropriate rate. Then $X_n$ converges weakly to a
process $X$ whose marginal distribution $X(t)$ is a mixture of truncated,
noncentral $\chi_j^2$-distributions, and we refer to $X$ as a noncentral
$\bar{\chi}^2$-process indexed by $\mathcal{T}$. The challenge here is not
merely to show $X_n(t) \Lto X(t)$ for fixed $t \in \mathcal{T}$, but rather to
prove weak convergence $X_n \Lto X$ on an appropriate function space.

\section{Asymptotic Distributions} \label{sec:asy_dist}

\subsection{Asymptotics with uniformly positive definite information} \label{sec:pd_inf}

We first consider the case where $I(t)$ is uniformly positive definite over
$\mathcal{T}$, meaning its smallest eigenvalue is bounded away from zero
uniformly in $t$. This simplifies the discussion and forms the basis for
Section~\ref{sec:singular}, where $I(t)$ may be singular.

Let $F^p(\mathcal{T})$ be the set of bounded $\R^{p}$-valued functions on
$\mathcal{T}$ (with $F^1 = F$); that is, $z\in F^p(\mathcal{T})$ if and only if
$\Vert z\Vert_{\mathcal{T}} = \sup_{t \in \mathcal{T}}\Vert z(t)\Vert_2 <
\infty$, where $\Vert \cdot \Vert_2$ is the Euclidean norm and $\Vert \cdot
\Vert_{\mathcal{T}}$ is defined by the first equality. We equip
$F^{p}(\mathcal{T})$ with the corresponding metric and Borel $\sigma$-algebra.
Because some random elements we shall consider need not be measurable, some
results concern outer probabilities and expectations, denoted by $\pr^*$ and
$\E^*$, respectively; we refer the reader to, for example,
\citet{vandervaart2023weak} for definitions and properties. We denote generic
constants by $c_i, C_i$, with $c_i$ small and $C_i$ large, $i \in \{1, 2,
\dots\}$.

\begin{assumption} \label{assm:Xn_bdd}
    For every $n$, $X_n$ is a random element of $F(\mathcal{T})$.
\end{assumption}

\begin{assumption} \label{assm:equicont}
    Equation \eqref{eq:ln_expand} holds with a remainder term that is
    stochastically, uniformly equicontinuous at $\xi_0$ in the sense that, for every $c_1,
    c_2 > 0$, there exists a $c_3 > 0$ such that
    \begin{align} \label{eq:equicont}
    \limsup_{n\to \infty} \pr^*\left(\sup_{t\in\mathcal{T}} 
    \sup_{\xi \in \bar{B}_{c_3}(\xi_0; t)} |R_n(\xi, t)| > c_1\right) < c_2,
    \end{align}
    where $\bar{B}_{c_3}(\xi_0; t) = \{\xi \in \Xi(t) : \Vert \xi - \xi_0\Vert_2
    \leq c_3\}$.
    
\end{assumption}

\begin{assumption} \label{assm:consistency}
    Approximate partial maximizers are uniformly consistent; that is, if for every $n$ and $t \in
    \mathcal{T}$, $\hat{\xi}_n(t)$ satisfies $\ell_n(\hat{\xi}_n(t), t) \geq
    \sup_{\xi \in \Xi(t)}\ell_n(\xi, t) - O_{p^*}(1)$, with the $O_{p^*}(1)$
    term not depending on $t$, then $\Vert \hat{\xi}_n -
    \xi_0\Vert_{\mathcal{T}} = o_{p^*}(1)$. 
\end{assumption}

\begin{assumption} \label{assm:inf_bound}
    For some constants $\underline{\kappa}$ and $\bar{\kappa}$,
    \[
       0 <  \underline{\kappa} \leq \inf_{t \in \mathcal{T}} \emin\{I(t)\} \leq 
       \sup_{t \in \mathcal{T}} \emax\{I(t)\} \leq \bar{\kappa} < \infty,
    \]
    where $\emin(\cdot)$ and $\emax(\cdot)$ are, respectively, the smallest and
    largest eigenvalues of the argument matrix. 
\end{assumption}

\begin{assumption} \label{assm:zn_conv}
    There is a tight, Borel-measurable $Z \in F^p(\mathcal{T})$ such
    that $Z_n \Lto Z$ in $F^p(\mathcal{T})$ as $n\to \infty$.
\end{assumption}

\begin{assumption} \label{assm:closed}
    For every $t\in \mathcal{T}$, $\Xi(t)$ is closed. 
\end{assumption}

\begin{assumption} \label{assm:cone}
    For every $t\in \mathcal{T}$, there is a closed cone $C(t) \subseteq \R^{p}$ such that
    \beq
        \lim_{s \downarrow 0} \sup_{t\in\mathcal{T}} d\left[s^{-1}\{\Xi(t) - \xi_0\}, C(t)\right] = 0,
        \lb{CtDef}
    \eeq
    where $d(\cdot, \cdot)$ is the distance for sets defined in Section
    \ref{app:proofs_sub} of the Supplementary material.
\end{assumption}

Assumption \ref{assm:Xn_bdd} ensures $X_n$ and its limit $X$ are elements of the
same metric space. It can be relaxed, for instance by working with a truncated version
$\min(X_n, n) \in F(\mathcal{T})$; assuming $\Lambda_n = O_{p^*}(1)$, we have
$\pr^*(\Lambda_n \leq n) \to 1$ and hence the truncation has no effect on the
asymptotic distribution. Neither $X_n$ nor $Z_n$ need be measurable.

In settings with independent and identically distributed (i.i.d.) observations,
Assumption \ref{assm:equicont} can often be verified with $U_n$ the score
function and $I$ the Fisher information matrix for one observation. This is formalized in
Proposition \ref{prop:assm_2_sufficient}.

If $\mathcal{T}$ is a singleton, which essentially corresponds to a setting
without nuisance parameters, Assumptions \ref{assm:equicont}--\ref{assm:cone}
are similar to common ones \citep[see, for example,][]{geyer1994asymptotics}. We
are mostly interested in cases where $\mathcal{T}$ is a more complicated set.
The results in this section make few explicit assumptions about $\mathcal{T}$,
though verifying Assumptions \ref{assm:equicont}--\ref{assm:cone} typically
requires some structure. For example, in some settings $I(\cdot)$ is a
continuous function from $\R^{q}$ to $\R^{p\times p}$, and then Assumption
\ref{assm:inf_bound} may require $\mathcal{T}$ to be a compact subset of $\R^q$.
Similarly, verifying Assumption \ref{assm:zn_conv} often amounts to showing a
class of functions indexed by $\mathcal{T}$ is Donsker, which in general
requires controlling the complexity of that class \citep[see, for
example,][Section 2]{vandervaart2023weak}. More generally, $Z$ need not be a
Gaussian process, but it will typically be so when $\ell_n$ is a log-likelihood.
In settings with i.i.d.\ observations, for example, $Z$ is often a Gaussian process,
which is centered under the null hypothesis $H_0: \xi=\xi_0$ and non-centered
under contiguous alternatives (see Section \ref{sec:ind}). 

Assumption \ref{assm:consistency} typically requires case-by-case verification,
as sufficient conditions depend on the specific model and parameter space.
Assumption \ref{assm:inf_bound} is common in the literature but will be relaxed
in Section \ref{sec:singular}, at the expense of making stronger assumptions
about $\mathcal{T}$, among other things. Assumption \ref{assm:closed} ensures
$\Delta_n(t)$ is closed, so that projections onto $\Delta_n(t)$ and $\Delta(t)$
exist. It can be relaxed if this can be ensured by other means. Assumption
\ref{assm:cone} is essentially a uniform version of Chernoff regularity
\citep{chernoff1954distribution}, which is common in the literature. It can only
hold if $\xi_0 \in \Xi(t)$ for all $t \in \mathcal{T}$. With the $C(t)$ given by
Assumption~\ref{assm:cone}, let $\Delta(t) = A(t)^\tsp C(t)$, as in the
definition of $X(t)$ in \eqref{eq:X-process}. 

We are ready to state the main result of the section, along with two
illustrative corollaries. The proof invokes several lemmas, which we state and
discuss shortly. With the lemmas in hand, the proof is a straightforward
application of a continuous mapping theorem and Slutsky's lemma. Therefore, it
is deferred to the Supplementary material, along with the proofs of other
results stated without proof in the main text.

\begin{theorem} \label{thm:main_nonsing} Under Assumptions
\ref{assm:Xn_bdd}-\ref{assm:cone}, $X$ is a Borel-measurable random element of
$F(\mathcal{T})$ and $X_n \Lto X$ in $F(\mathcal{T})$ as $n\to\infty$.
\end{theorem}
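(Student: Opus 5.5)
The plan is to write $X_n = \Phi_n(Z_n) + r_n$ and $X = \Phi(Z)$. Here $\Phi(z)(t) = \sup_{\delta\in\Delta(t)}\{2\delta^\tsp z(t) - \Vert\delta\Vert_2^2\}$ and $\Phi_n$ is the same map with $\Delta(t)$ replaced by $\Delta_n(t)$. The remainder $r_n$ satisfies $\Vert r_n\Vert_{\mathcal{T}} = o_{p^*}(1)$. The result then follows from the extended continuous mapping theorem \citep[Theorem 1.11.1]{vandervaart2023weak} applied to $\Phi_n \to \Phi$, together with Slutsky's lemma.

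Measurability of $X$ comes first. Because $\Delta(t) = A(t)^\tsp C(t)$ is a closed cone containing $0$, we have $0 \le \Phi(z)(t) \le \Vert z(t)\Vert_2^2$. For any closed set the elementary bound $|\sup_\delta Q(\delta,t;z) - \sup_\delta Q(\delta,t;z')| \le C\Vert z - z'\Vert_{\mathcal{T}}(\Vert z\Vert_{\mathcal{T}} + \Vert z'\Vert_{\mathcal{T}})$ holds, since maximizers satisfy $\Vert\delta\Vert_2 \le 2\Vert z(t)\Vert_2$. Hence $\Phi: F^p(\mathcal{T}) \to F(\mathcal{T})$ is continuous, and $X = \Phi(Z)$ is Borel measurable and tight.

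Next, the continuous-mapping step. I will show that $z_n \to z$ in $F^p(\mathcal{T})$ implies $\Phi_n(z_n) \to \Phi(z)$ uniformly. By the Lipschitz-type bound it suffices to handle fixed $z$ on balls $\{\Vert\delta\Vert_2 \le M\}$. Write $\Delta_n(t) = A(t)^\tsp s_n^{-1}\{\Xi(t)-\xi_0\}$ with $s_n = n^{-1/2}$. Assumption \ref{assm:cone} gives $\sup_t d[s_n^{-1}\{\Xi(t)-\xi_0\}, C(t)] \to 0$, and Assumption \ref{assm:inf_bound} bounds $\Vert A(t)\Vert$ and $\Vert A(t)^{-1}\Vert$ uniformly. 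Together these give uniform convergence of $\Delta_n(t)\cap\bar B_M$ to $\Delta(t)\cap\bar B_M$ in the set distance used in the supplement; I expect that distance to be a localized Hausdorff-type distance, and will use it as such. Since $\delta\mapsto 2\delta^\tsp z(t) - \Vert\delta\Vert_2^2$ is Lipschitz on $\bar B_M$ uniformly in $t$, the suprema converge uniformly in $t$.

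The main obstacle is controlling the remainder $r_n$ uniformly in $t$, that is, showing
\[
\sup_t\Big|X_n(t) - \sup_{\delta\in\Delta_n(t)}Q_n(\delta,t)\Big| = o_{p^*}(1).
\]
The first step is localization. Let $\hat\xi_n(t)$ be an approximate maximizer, within $1/n$ of the supremum. Comparing with $\xi_0$ shows $\ell_n(\hat\xi_n(t),t) \ge \sup_\xi\ell_n(\xi,t) - 1/n$, so Assumption \ref{assm:consistency} gives $\Vert\hat\xi_n - \xi_0\Vert_{\mathcal{T}} = o_{p^*}(1)$. With high probability $\hat\xi_n(t)\in\bar B_{c_3}(\xi_0;t)$ for all $t$, where $\sup|R_n| \le c_1$ by Assumption \ref{assm:equicont}.

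On that event, \eqref{eq:X_n-process-delta} gives $X_n(t) \le \sup_{\delta}\{2\delta^\tsp Z_n(t) - (1-2c_1)\Vert\delta\Vert_2^2\}$, up to an additive $2/n$ from the near-maximizer. Taking $c_1<1/4$, this yields $\Vert\hat\delta_n(t)\Vert_2 \le C\Vert Z_n\Vert_{\mathcal{T}} = O_{p^*}(1)$ by Assumption \ref{assm:zn_conv}. Thus $\hat\xi_n - \xi_0 = O_{p^*}(n^{-1/2})$ uniformly, and this rate holds without needing any $o_p(1)$ radius shrinking. Rerunning Assumption \ref{assm:equicont} with arbitrary $c_1$ on $\{\Vert\delta\Vert_2\le M\}$ then gives an upper bound of $\sup Q_n + 2M^2c_1$. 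For the lower bound, plug in a near-maximizer of $Q_n$ over $\Delta_n(t)$; its norm is at most $2\Vert Z_n(t)\Vert_2$ and it lies in the same small ball. Letting $c_1\downarrow 0$ gives $\Vert r_n\Vert_{\mathcal{T}} = o_{p^*}(1)$.

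Finally, Assumption \ref{assm:Xn_bdd} places $X_n$ in $F(\mathcal{T})$. The extended continuous mapping theorem gives $\Phi_n(Z_n)\Lto\Phi(Z)$, and Slutsky's lemma for outer-probability weak convergence gives $X_n\Lto X$.
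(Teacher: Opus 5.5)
Your proposal is correct and follows essentially the paper's route. The same approximate-maximizer argument (consistency, then a uniform $O_{\pr^*}(1)$ bound on $\hat\delta_n$ from the equicontinuous remainder, as in Lemmas \ref{lem:rate} and \ref{lem:Xtilde_Xn}) gives $\Vert X_n-\tilde X_n\Vert_{\mathcal T}=o_{\pr^*}(1)$, and the ingredients of Lemmas \ref{lem:set_dist} and \ref{lem:checkX_X} (uniform set convergence from Assumptions \ref{assm:inf_bound} and \ref{assm:cone}, plus Lipschitz continuity of the supremum map) handle the rest. The difference is only in packaging: you use the extended continuous mapping theorem for $\Phi_n(Z_n)=\tilde X_n\Lto\Phi(Z)$, which merges the paper's separate steps $\Vert\tilde X_n-\check X_n\Vert_{\mathcal T}=o_{\pr^*}(1)$ (Lemma \ref{lem:tildeX_checkX}) and $\check X_n\Lto X$ (Lemma \ref{lem:checkX_X}) into one, and you derive the $n^{-1/2}$ rate directly in $\delta$-coordinates rather than through a separate lemma.
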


\begin{corollary} \label{cor:lrt_nonsing} Under Assumptions
\ref{assm:Xn_bdd}-\ref{assm:cone}, and assuming $t \mapsto \ell_n(\xi_0, t)$ is
constant, $\Lambda = \Vert X\Vert_{\mathcal{T}}$ in \re{La} is Borel-measurable.
Moreover, for $\Lambda_n = \Vert X_n \Vert_{\mathcal{T}}$ in \re{eq:lrt_stat},
$\Lambda_n \Lto \Lambda$ as $n\to\infty$.
\end{corollary}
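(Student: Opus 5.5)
The corollary should follow from Theorem~\ref{thm:main_nonsing} by the continuous mapping theorem applied to the norm $\Vert\cdot\Vert_{\mathcal{T}}$ on $F(\mathcal{T})$.

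First, I will check that $\Lambda_n = \Vert X_n\Vert_{\mathcal{T}}$ really is the statistic in \re{eq:lrt_stat}. This needs $X_n(t)\ge 0$ for every $t$. Assumption~\ref{assm:cone} can only hold if $\xi_0\in\Xi(t)$ for all $t$, so $\xi=\xi_0$ is admissible in the supremum \eqref{eq:X_n-process}. That gives $X_n(t)\ge 2\{\ell_n(\xi_0,t)-\ell_n(\xi_0,t)\}=0$. Hence $\sup_t X_n(t)=\sup_t |X_n(t)| = \Vert X_n\Vert_{\mathcal{T}}$.

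The same argument applies to the limit. The set $\Delta(t)=A(t)^\tsp C(t)$ is a cone, and closed cones contain $0$, so $X(t)\ge Q(0,t)=0$. Therefore $\Lambda=\sup_t X(t)=\Vert X\Vert_{\mathcal{T}}$.

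The constancy of $t\mapsto \ell_n(\xi_0,t)$ is used only to identify $\Lambda_n$ with the likelihood ratio statistic. For that identification I will write
\[
\sup_{t}\sup_{\xi}2\{\ell_n(\xi,t)-\ell_n(\xi_0,t)\} = 2\Bigl\{\sup_{\theta}\ell_n(\theta)-\ell_n(\xi_0,\cdot)\Bigr\},
\]
which is valid because $\ell_n(\xi_0,t)$ does not depend on $t$.

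Next, the map $g:F(\mathcal{T})\to\R$ given by $g(x)=\Vert x\Vert_{\mathcal{T}}$ is $1$-Lipschitz, since $|\Vert x\Vert_{\mathcal{T}}-\Vert y\Vert_{\mathcal{T}}|\le\Vert x-y\Vert_{\mathcal{T}}$. In particular it is continuous, and hence Borel measurable. Theorem~\ref{thm:main_nonsing} gives that $X$ is a Borel-measurable random element of $F(\mathcal{T})$. So $\Lambda=g(X)$ is Borel measurable, as a composition of measurable maps.

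Theorem~\ref{thm:main_nonsing} also gives $X_n\Lto X$ in the Hoffmann-J{\o}rgensen sense, since the $X_n$ need not be measurable. The continuous mapping theorem for this notion of weak convergence \citep[see][]{vandervaart2023weak} only requires $g$ to be continuous at every point of a set supporting $X$. Here $g$ is continuous everywhere, so the theorem yields $\Lambda_n=g(X_n)\Lto g(X)=\Lambda$.

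I do not expect a serious obstacle. The one point that needs care is the nonnegativity step, that is, that $\xi_0\in\Xi(t)$ for all $t$ and $0\in\Delta(t)$. This is what allows the supremum to be replaced by the sup-norm in both $\Lambda_n$ and $\Lambda$. The rest is a direct use of the continuous mapping theorem in its outer-probability form.
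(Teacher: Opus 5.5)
Your proposal is correct and follows the paper's route: the paper also obtains the corollary directly from Theorem~\ref{thm:main_nonsing} and the continuous mapping theorem applied to the $1$-Lipschitz map $x\mapsto\Vert x\Vert_{\mathcal{T}}$, with Borel measurability of $\Lambda$ inherited from that of $X$. Your explicit check that $X_n(t)\ge 0$ and $X(t)\ge 0$, so that $\sup_t$ coincides with $\Vert\cdot\Vert_{\mathcal{T}}$, is a detail the paper leaves implicit.
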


Corollary \ref{cor:lrt_nonsing} is an immediate consequence of Theorem
\ref{thm:main_nonsing} and the Continuous Mapping Theorem. Unless we also assume
$C(t)$ is convex for every $t \in \mathcal{T}$, $X$ need not be a
$\bar{\chi}^2$-process, as discussed following \eqref{eq:X-process}.

\begin{remark}
Our principal application of Theorem \ref{thm:main_nonsing} and
Corollary \ref{cor:lrt_nonsing} is when $\ell_n(\xi,t)$ is a log-likelihood and
$\ell_n(\xi_0,t)$ does not depend on $t\in\cT$. Then $\Lambda_n$ is the
likelihood ratio test (LRT) statistic for testing the simple null hypothesis
$H_0:\xi=\xi_0$ against $H_1:\xi\in\Xi\setminus\{\xi_0\}$. If $\xi=\xi_0$, the
convergences in Theorem \ref{thm:main_nonsing} and
Corollary \ref{cor:lrt_nonsing} provide weak convergence of $X_n$ and the LRT
statistic $\Lambda_n$ under $H_0$, respectively. In particular, the
significance level $\alpha_n(c) = \pr_{(\xi_0,t)}(\La_n\ge c)$ of the LRT with
threshold $c>0$ is independent of $t$ and
\beq
\alpha_n(c) \to \alpha(c) = \pr_{(\xi_0,t)}(\La \ge c)\mbox{ as }n\to\infty
\lb{alphaConv}
\eeq
at all continuity points $c$ of the distribution function $F_\La$ of $\La$
under $H_0$. Theorem \ref{thm:main_nonsing} can also be applied to weak
convergence of $X_n$ under contiguous alternatives $\theta_n=(\xi_n,t_0)$, if
$\xi_n \to \xi_0$ at an appropriate rate. Then
$\alpha_n(c;\xi_n,t_0)=\pr_{(\xi_n,t_0)}(\La_n\ge c)$ converges to a limiting
power function. It is also possible to consider limiting averaged local power
functions \citep{anpl1995, an1996}, where a limiting averaged power of the LRT
is evaluated over a set of local alternatives that correspond to the same
asymptotic noncentrality parameter.
\hfill\slut
\end{remark}

The following corollary shows how classical results for boundary points are
obtained as special cases by taking, for example, $\mathcal{T} = \{0\}$; the
choice is arbitrary as the value of $t$ does not affect the result.
\begin{corollary} \label{cor:no_nuisance} Under the conditions of
    Corollary~\ref{cor:lrt_nonsing}, suppose there are no nuisance parameters,
    i.e.\ $\mathcal{T} = \{0\}$, and additionally that $\Delta = \Delta(0)$ is a
    closed convex cone and $Z(0)\sim N(\mu,\Id_p)$ for some $\mu\in\R^p$. If
    $\mu=0$, then $\Lambda_n \Lto \Lambda = X(0) = \|P_\Delta Z(0)\|_2^2 \sim
    \bar{\chi}^2_p$ as $n\to\infty$, where the limiting
    $\bachi^2_p$-distribution in \re{chibar} has weights $w_0,w_1,\ldots,w_p$
    determined by $\De$. If additionally $\xi_0$ is in the interior of $\Xi$,
    then $\Delta = \R^p$ and $X(0)\sim \chi^2_p$. If $\mu\ne 0$, then $\La =
    \|P_\Delta [\mu + \va(0)]\|_2^2 $, with $\va(0)\sim N(0,\Id_p)$, will be a
    mixture of truncated, noncentral $\chi^2$-distributions, with weights
    depending on $\mu$.  If additionally $\xi_0$ is in the interior of $\Xi$,
    then $X(0)\sim\chi_{p,\|\mu\|_2^2}^2$ has a noncentral $\chi^2$-distribution
    with noncentrality parameter $\|\mu\|_2^2$.
\end{corollary}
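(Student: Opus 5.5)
With $\mathcal{T}=\{0\}$, $F(\mathcal{T})$ is just $\R$ with the usual metric, and $\Vert X_n\Vert_{\mathcal{T}} = |X_n(0)|$. The identity $\Lambda_n = X_n(0)$ requires $X_n(0)\ge 0$. This holds because $\xi_0\in\Xi(0)$, which is implied by Assumption~\ref{assm:cone}, so the supremum in \eqref{eq:X_n-process} is at least the value at $\xi=\xi_0$, namely $0$. Likewise $\Lambda = X(0) \ge 0$, since $0\in\Delta$.

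Corollary~\ref{cor:lrt_nonsing} then gives $\Lambda_n \Lto \Lambda = X(0) = \sup_{\delta\in\Delta} Q(\delta,0)$. It remains to identify this law. Because $\Delta$ is a closed convex cone, the projection $P_\Delta$ exists, is unique for every point, and satisfies the Pythagorean identity $\langle z - P_\Delta z, P_\Delta z\rangle = 0$. The chain \eqref{XtProj} therefore holds pointwise and gives $X(0)=\Vert P_\Delta Z(0)\Vert_2^2$.

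For $\mu=0$, $Z(0)\sim N(0,\Id_p)$. The $\bar\chi^2_p$ representation \eqref{chibar}, with weights determined by $\Delta$, is the cited result of \citet{shapiro1985asymptotic,shapiro1988unified}, which I would quote. If $\xi_0$ is interior to $\Xi$, then $s^{-1}(\Xi-\xi_0)$ eventually contains any fixed ball, so $C=\R^p$. Since $A(0)$ is invertible by Assumption~\ref{assm:inf_bound}, $\Delta = A(0)^\tsp\R^p=\R^p$. Then $P_\Delta$ is the identity and $X(0)=\Vert Z(0)\Vert_2^2\sim\chi^2_p$.

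For $\mu\neq0$, write $Z(0)=\mu+\va(0)$ with $\va(0)\sim N(0,\Id_p)$, so that $\Lambda = \Vert P_\Delta(\mu+\va(0))\Vert_2^2$. In the interior case this is $\Vert\mu+\va(0)\Vert_2^2\sim\chi^2_{p,\Vert\mu\Vert_2^2}$ by definition of the noncentral $\chi^2$ distribution.

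The only genuinely delicate step is the description of the general $\mu\ne0$ cone case as a mixture of truncated noncentral $\chi^2$ laws. For a polyhedral cone, I would partition $\R^p$ by which face $F$ of $\Delta$ contains $P_\Delta z$. On each such region, $P_\Delta z$ is the orthogonal projection of $z$ onto the linear span of $F$, whose dimension is some $j$. Hence $\Vert P_\Delta z\Vert_2^2$ restricted to that region is the law of a noncentral $\chi^2_j$ variable conditioned on a region event. Summing over faces gives a mixture whose weights are the region probabilities, which depend on $\mu$. For non-polyhedral cones I would state this descriptively, or approximate by polyhedral cones, since the claim is qualitative.
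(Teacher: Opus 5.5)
Your proposal is correct and follows the route the paper implicitly takes; the paper gives no separate proof, only the remark that the corollary is a specialization. That route is: specialize Corollary~\ref{cor:lrt_nonsing} to the singleton $\mathcal{T}=\{0\}$, use \eqref{XtProj} for a closed convex cone to get $X(0)=\Vert P_\Delta Z(0)\Vert_2^2$, cite Shapiro for the central $\bar{\chi}^2$ law, and note that an interior $\xi_0$ forces $C=\Delta=\R^p$. Your face-by-face decomposition for polyhedral $\Delta$ makes precise the ``truncated noncentral'' description that the paper only asserts. Moreover, because $P_{\mathrm{span}F}Z(0)$ is independent of its orthogonal complement, and its norm is independent of its direction when $\mu=0$, the same decomposition gives a self-contained derivation of the polyhedral $\bar{\chi}^2$ weights $w_j$ without the citation.
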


As we will find in Section \ref{sec:ind} for i.i.d.\ data, the case $\mu=0$ of
Corollary \ref{cor:no_nuisance} corresponds to asymptotic limits under the null
hypothesis $H_0$, whereas $\mu\ne 0$ corresponds to limits for a sequence of
contiguous alternatives. 

Now, to discuss the components of the proof of Theorem \ref{thm:main_nonsing}define, for every $t\in \mathcal{T}$,
\begin{align} \label{eq:X_tilde_check}
\tilde{X}_n(t) = \sup_{\delta \in \Delta_n(t)} Q_n(\delta, t);~~ \check{X}_n(t) = \sup_{\delta \in \Delta(t)}
Q_n(\delta, t).
\end{align}
The proof strategy is to show that $X_n$ is close to $\tilde{X}_n$, which in turn is close to $\check{X}_n$, which converges in law to $X$. We shall need the following rate of convergence result.

\begin{lemma} \label{lem:rate}
    Under Assumptions \ref{assm:equicont}--\ref{assm:zn_conv}, every $\hat{\xi}_n$ in Assumption \ref{assm:consistency} satisfies
    \[
        \Vert \hat{\xi}_n - \xi_0\Vert_{\mathcal{T}} = O_{\pr^*}(n^{-1/2}).
    \]
\end{lemma}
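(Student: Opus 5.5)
We follow the standard rate-of-convergence argument for M-estimators, made uniform in $t$. Write $h_n(t) = \hat{\xi}_n(t) - \xi_0$ and $r_n(t) = \Vert I(t)^{1/2} h_n(t)\Vert_2$. Since $\xi_0 \in \Xi(t)$ for every $t$ (implicit in Assumption~\ref{assm:cone}, or directly since the supremum is at least the value at $\xi_0$), the defining property of $\hat{\xi}_n$ gives
\[
\ell_n(\hat{\xi}_n(t), t) - \ell_n(\xi_0, t) \geq -O_{\pr^*}(1)
\]
uniformly in $t$, with a single $O_{\pr^*}(1)$ term $K_n$ not depending on $t$. We insert this into the expansion \eqref{eq:ln_expand} at $\xi = \hat{\xi}_n(t)$, which yields, for every $t$,
\[
-K_n \leq \sqrt{n}\, h_n(t)^\tsp U_n(t) - \frac{n}{2} r_n(t)^2 + n\, r_n(t)^2 R_n(\hat{\xi}_n(t), t).
\]

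Next we control the remainder uniformly. By Assumption~\ref{assm:consistency}, $\Vert h_n\Vert_{\mathcal{T}} = o_{\pr^*}(1)$. Fix $c_2>0$, take $c_1 = 1/4$ in Assumption~\ref{assm:equicont}, and obtain $c_3$. On the event that $\Vert h_n\Vert_{\mathcal{T}} \leq c_3$ and the supremum in \eqref{eq:equicont} is at most $1/4$, an event whose outer probability is at least $1 - 2c_2$ for large $n$, we have $\sup_t |R_n(\hat{\xi}_n(t),t)| \leq 1/4$. On this event,
\[
\frac{n}{4} r_n(t)^2 \leq K_n + \sqrt{n}\, h_n(t)^\tsp U_n(t).
\]
For the linear term, write $h^\tsp U_n = (A^\tsp h)^\tsp A^{-1} U_n$, so $|h_n^\tsp U_n| \leq r_n(t) \Vert Z_n(t)\Vert_2 \leq r_n(t)\Vert Z_n\Vert_{\mathcal{T}}$. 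Assumption~\ref{assm:zn_conv}, with tightness of $Z$ and the continuous mapping theorem applied to the norm, gives $\Vert Z_n\Vert_{\mathcal{T}} = O_{\pr^*}(1)$.

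Setting $x = \sqrt{n}\, r_n(t)$, we obtain the quadratic inequality $x^2/4 \leq K_n + x \Vert Z_n\Vert_{\mathcal{T}}$. This forces $x \leq 4\Vert Z_n\Vert_{\mathcal{T}} + 2K_n^{1/2}$ for all $t$ simultaneously, so $\sup_t \sqrt{n}\, r_n(t) = O_{\pr^*}(1)$. Finally, Assumption~\ref{assm:inf_bound} gives $\Vert h_n(t)\Vert_2 \leq \underline{\kappa}^{-1/2} r_n(t)$, which converts this into the claim $\Vert \hat{\xi}_n - \xi_0\Vert_{\mathcal{T}} = O_{\pr^*}(n^{-1/2})$. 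Only the lower eigenvalue bound is used here.

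The main obstacle is bookkeeping rather than a hard estimate, because everything must hold uniformly in $t$ and with outer probabilities. The key steps are:
\begin{itemize}
\item[(i)] the $O_{\pr^*}(1)$ slack must be uniform in $t$, which Assumption~\ref{assm:consistency} stipulates;
\item[(ii)] the remainder bound requires $\hat{\xi}_n(t) \in \bar{B}_{c_3}(\xi_0;t)$ for all $t$ at once, which is exactly what uniform consistency provides;
\item[(iii)] the intersection of the good events must have large outer probability, which follows from subadditivity of $\pr^*$.
\end{itemize}
For a given $\varepsilon$, we then choose $M$ so that $\pr^*(K_n > M)$ and $\pr^*(\Vert Z_n\Vert_{\mathcal{T}} > M)$ are eventually small, and conclude the result.
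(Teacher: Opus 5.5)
Your proof is correct and follows essentially the same route as the paper's: plug the approximate maximizer into the expansion, use uniform consistency to land in the ball where the remainder is at most a small constant, bound the linear term by $\Vert Z_n\Vert_{\mathcal{T}}$ times the distance, and solve the resulting quadratic inequality uniformly in $t$. The only cosmetic difference is that you measure distance in the $I(t)$-weighted norm $r_n(t)$ and take $c_1 = 1/4$. As a result only $\underline{\kappa}$ enters at the very end and the constants come out cleanly, whereas the paper works in the Euclidean norm with both $\underline{\kappa}$ and $\bar{\kappa}$ inside the quadratic and handles the slack through an auxiliary constant $C_2$.
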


\begin{lemma} \label{lem:Xtilde_Xn}
    Under Assumptions \ref{assm:equicont}--\ref{assm:zn_conv}, $\tilde{X}_n$ is a random element of $F(\mathcal{T})$ for all $n$, and $\Vert \tilde{X}_n - X_n\Vert_{\mathcal{T}}  = o_{p^*}(1)$ as $n\to \infty$.
\end{lemma}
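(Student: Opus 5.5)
The proof compares the two suprema in \eqref{eq:X_n-process-delta} and \eqref{eq:X_tilde_check} directly; both are taken over the same set $\Delta_n(t)$ and differ only by the remainder term $2\Vert\delta\Vert_2^2\tilde R_n(\delta,t)$. The idea is to show that both suprema are attained, or nearly attained, at $\delta$'s that are $O_{p^*}(1)$ uniformly in $t$. On such a region the remainder is $o_{p^*}(1)$ uniformly in $t$ by Assumption~\ref{assm:equicont}, since $\Vert\delta\Vert_2\le C$ gives $\Vert\xi-\xi_0\Vert_2\le C n^{-1/2}\underline{\kappa}^{-1/2}\to0$ by Assumption~\ref{assm:inf_bound}.

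\textbf{Step 1: $\tilde X_n \in F(\mathcal T)$.} Since $0\in\Delta_n(t)$, we have $\tilde X_n(t)\ge 0$. Completing the square gives $Q_n(\delta,t)=\Vert Z_n(t)\Vert_2^2-\Vert Z_n(t)-\delta\Vert_2^2\le \Vert Z_n(t)\Vert_2^2$, so $0\le \tilde X_n(t)\le \Vert Z_n\Vert_{\mathcal T}^2$. Finiteness of $\Vert Z_n\Vert_{\mathcal T}$ follows from $U_n(t)$ being bounded; I would take this from Assumption~\ref{assm:zn_conv}, reading it as requiring $Z_n\in F^p(\mathcal T)$. The same bound gives $\Vert\tilde X_n\Vert_{\mathcal T}=O_{p^*}(1)$, because $Z_n$ converges in law to a tight limit.

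\textbf{Step 2: lower bound on $X_n-\tilde X_n$.} Let $\tilde\delta_n(t)\in\Delta_n(t)$ be a near-maximizer of $Q_n(\cdot,t)$, within $1/n$ of the supremum. Then $\Vert Z_n(t)-\tilde\delta_n(t)\Vert_2^2\le\Vert Z_n(t)\Vert_2^2+1/n$, so $\Vert\tilde\delta_n(t)\Vert_2\le 2\Vert Z_n\Vert_{\mathcal T}+1=O_{p^*}(1)$ uniformly in $t$. The corresponding $\xi$ lies in $\bar B_{c_3}(\xi_0;t)$ for every $t$ with probability tending to one. Hence
\[
X_n(t)\ge Q_n(\tilde\delta_n(t),t)+2\Vert\tilde\delta_n(t)\Vert_2^2\tilde R_n(\tilde\delta_n(t),t)\ge \tilde X_n(t)-1/n-o_{p^*}(1),
\]
uniformly in $t$. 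The $o_{p^*}(1)$ bound comes from Assumption~\ref{assm:equicont}: for each $c_1$ we choose the matching $c_3$, and then $\sup_t 2\Vert\tilde\delta_n(t)\Vert_2^2|\tilde R_n(\tilde\delta_n(t),t)|\le 2c_1\,O_{p^*}(1)$ on an event of probability at least $1-c_2-o(1)$.

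\textbf{Step 3: upper bound, the main obstacle.} Here the near-maximizer of the full objective must be localized, and this is where Lemma~\ref{lem:rate} is used. Let $\hat\xi_n(t)$ satisfy $2\{\ell_n(\hat\xi_n(t),t)-\ell_n(\xi_0,t)\}\ge X_n(t)-1/n$; such a point exists because $X_n(t)<\infty$ by Assumption~\ref{assm:Xn_bdd}. This is an approximate maximizer in the sense of Assumption~\ref{assm:consistency}, with a deterministic error term. Lemma~\ref{lem:rate} then gives $\Vert\hat\xi_n-\xi_0\Vert_{\mathcal T}=O_{p^*}(n^{-1/2})$. With $\hat\delta_n(t)=n^{1/2}A(t)^\tsp(\hat\xi_n(t)-\xi_0)$, Assumption~\ref{assm:inf_bound} gives $\Vert\hat\delta_n(t)\Vert_2\le\bar\kappa^{1/2}n^{1/2}\Vert\hat\xi_n(t)-\xi_0\Vert_2$, so $\sup_t\Vert\hat\delta_n(t)\Vert_2=O_{p^*}(1)$. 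Applying the expansion \eqref{eq:ln_tilde_expand} at $\hat\delta_n(t)\in\Delta_n(t)$, together with the same remainder argument as in Step 2, yields
\[
X_n(t)\le Q_n(\hat\delta_n(t),t)+1/n+o_{p^*}(1)\le\tilde X_n(t)+1/n+o_{p^*}(1),
\]
uniformly in $t$.

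Combining Steps 2 and 3 gives $\Vert\tilde X_n-X_n\Vert_{\mathcal T}=o_{p^*}(1)$. The delicate points are all in Step 3: Assumption~\ref{assm:consistency} must accept a $t$-dependent choice of approximate maximizer with a common error term, and every bound has to be stated with outer probabilities since measurability is not assumed. I expect the rate lemma to resolve both.
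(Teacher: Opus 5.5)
Your proposal is correct and is essentially the paper's argument: both localize a near-maximizer $\hat\delta_n$ of the full objective via Lemma~\ref{lem:rate} and Assumption~\ref{assm:inf_bound}, bound a near-maximizer $\tilde\delta_n$ of $Q_n$ through $\Vert Z_n\Vert_{\mathcal{T}}$, and then control the remainder uniformly with Assumption~\ref{assm:equicont}; your two one-sided bounds simply make explicit the paper's ``two applications of the approximate-maximizer property.'' Your appeal to Assumption~\ref{assm:Xn_bdd} to guarantee that $\hat\xi_n(t)$ exists goes slightly beyond the lemma's listed hypotheses, but the paper's proof tacitly needs the same finiteness of $\sup_{\xi}\ell_n(\xi,t)$.
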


\begin{proof}
We first show $\tilde{X}_n$ is a random element of $F(\mathcal{T})$ by an argument we will use variations on several times: since $0 \in \Delta_n(t)$ and $Q_n(0, t) = 0$, the supremum of $\delta \mapsto Q_n(\delta, t)$ over $\Delta_n(t)$ is non-negative. But for any $\delta$ with $\Vert \delta\Vert_2 > 2\Vert Z_n(t)\Vert_2$, by Cauchy--Schwarz,
\begin{align} \label{eq:large_delta_negative}
   Q_n(\delta, t) \leq 2\Vert \delta\Vert_2 \Vert Z_n(t)\Vert_2 - \Vert \delta\Vert_2^2 = \Vert \delta\Vert_2(2\Vert Z_n(t)\Vert_2 - \Vert \delta\Vert_2) < 0.
\end{align}
Thus, 
\[
    |\tilde{X}_n(t)| = \sup_{\delta \in \Delta_n(t), \Vert \delta\Vert_2 \leq 2\Vert Z_n(t)\Vert_2} Q_n(\delta, t) \leq 8\Vert Z_n(t)\Vert_2^2,
\]
which, as a function of $t$, is a random element of $F(\mathcal{T})$ since $Z_n$ is a random element of $F^p(\mathcal{T})$ by Assumption \ref{assm:zn_conv}.

Next, pick $\hat{\xi}_n(t)$ such that $\ell_n(\hat{\xi}_n(t), t) \geq \sup_{\xi
\in \Xi(t)}\ell_n(\xi, t) - 1/n$, say. Let $\hat{\delta}_n(t) =
n^{1/2}A(t)(\hat{\xi}_n(t) - \xi_0)$ and note $\Vert
\hat{\delta}_n\Vert_{\mathcal{T}} = O_{p^*}(1)$ by Lemma \ref{lem:rate} and
Assumption \ref{assm:inf_bound}. Similarly, pick a $\tilde{\delta}_n(t)$ such
that $Q_n(\tilde{\delta}_n(t), t) \geq \sup_{\delta \in \Delta_n(t)} Q_n(\delta, t) - 1/n$. That supremum is non-negative by
the arguments preceding \eqref{eq:large_delta_negative}, so
$\tilde{\delta}_n(t)$ must achieve at least the value $-1/n$. We will use this
to show $\Vert \tilde{\delta}_n\Vert_{\mathcal{T}} = O_{p^*}(1)$. To that end,
suppose $\Vert Z_n\Vert_{\mathcal{T}} < C_1$. Then by the inequalities in
\eqref{eq:large_delta_negative}, any $\Vert \delta\Vert_2 > 3C_1$ leads to a value less than $-3C_1$, which is less than $1/n$ for all large enough $n$.
Thus, for such $n$, $\Vert \tilde{\delta}_n\Vert_{\mathcal{T}} \leq 3C_1$
whenever $\Vert Z_n\Vert_{\mathcal{T}} \leq C_1$, and since $\Vert
Z_n\Vert_{\mathcal{T}} = O_{p^*}(1)$, we have $\Vert
\tilde{\delta}_n\Vert_{\mathcal{T}} = O_{p^*}(1)$.

Next, for any $c_1, C_1 > 0$, by countable sub-additivity of outer probabilities, $ \pr^*(\Vert X_n - \tilde{X}_n\Vert_{\mathcal{T}} > c_1)$ is upper bounded by
\begin{equation}\label{eq:part_a}
\begin{aligned}
 \pr^*(\Vert X_n - \tilde{X}_n\Vert_{\mathcal{T}} > c_1, \Vert \hat{\delta}_n\Vert_{\mathcal{T}}\leq C_1, \Vert \tilde{\delta}_n\Vert_{\mathcal{T}} \leq C_1)
 +\pr^*(\Vert \hat{\delta}_n\Vert_{\mathcal{T}} > C_1) + \pr^*(\Vert \tilde{\delta}_n\Vert_{\mathcal{T}} > C_1).
\end{aligned}
\end{equation}
The last two terms can be made arbitrarily small by choosing $C_1$ large enough, so let us deal with the first term. Define
\begin{equation*}
    \Upsilon_{C_1,n} = \{(\delta, t): t \in\mathcal{T}, \delta\in\Delta_n(t), \Vert\delta\Vert_2\leq C_1\}. 
\end{equation*}
Since $\hat{\delta}_n(t)$ and $\tilde{\delta}_n(t)$ are $1/n$-approximate maximizers satisfying $\Vert\cdot\Vert_2\leq C_1$, two applications of the approximate-maximizer property and $|\sup f - \sup g|\leq \sup|f-g|$ give
\begin{equation*}
    \Vert X_n - \tilde{X}_n\Vert_{\mathcal{T}} \leq 4/n +
    \sup_{(\delta,t)\in\Upsilon_{C_1,n}}
    \left|2\{\tilde{\ell}_n(\delta,t)-\tilde{\ell}_n(0,t)\}-Q_n(\delta,t)\right|.
\end{equation*}

The quantity in absolute value in the last display is, by
\eqref{eq:ln_tilde_expand} and the definition of $\tilde{R}_n$, $2\Vert \delta\Vert_2^2 |R_n(\xi_0 + n^{-1/2}A(t)^{-\tsp}\delta, t)|$. Since, for
$(\delta, t) \in \Upsilon_{C_1,n}$, $\vert n^{-1/2}A(t)^{-\tsp}\delta\vert \leq
n^{-1/2}\Vert A(t)^{-\tsp}\Vert \Vert \delta\Vert_2 \leq n^{-1/2}C_1
\underline{\kappa}^{-1/2}$, $\xi_0 + n^{-1/2}A(t)^{-\tsp}\delta \to \xi_0$,
\eqref{eq:equicont} gives that the last supremum in the last display is
$o_{p^*}(1)$, which finishes the proof.
\end{proof}

\begin{lemma} \label{lem:tildeX_checkX} Under Assumptions
    \ref{assm:equicont}--\ref{assm:cone}, $\tilde{X}_n$ and $\check{X}_n$ are
    random elements of $F(\mathcal{T})$ for all $n$, and $\Vert \tilde{X}_n -
    \check{X}_n\Vert_{\mathcal{T}}  = o_{p^*}(1)$ as $n\to \infty$.
\end{lemma}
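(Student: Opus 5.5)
The plan is to first reproduce the boundedness argument from the proof of Lemma~\ref{lem:Xtilde_Xn}, and then compare the suprema of the single function $Q_n(\cdot,t)$ over the two sets $\Delta_n(t)$ and $\Delta(t)$ using the uniform set convergence of Assumption~\ref{assm:cone}.

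\textbf{Step 1: random elements.} Both $\Delta_n(t)$ and $\Delta(t)=A(t)^\tsp C(t)$ contain $0$; for $\Delta(t)$ this holds because $C(t)$ is a closed cone. Since $Q_n(0,t)=0$, both suprema are nonnegative. By \eqref{eq:large_delta_negative}, both may be restricted to $\Vert\delta\Vert_2\le 2\Vert Z_n(t)\Vert_2$. Hence both are bounded by $8\Vert Z_n(t)\Vert_2^2$, and Assumption~\ref{assm:zn_conv} places them in $F(\mathcal{T})$.

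\textbf{Step 2: reduction to a compact region.} Fix $c_1>0$ and pick $C_1$ with $\pr^*(\Vert Z_n\Vert_{\mathcal{T}}>C_1)$ small for all large $n$, using tightness of $Z_n$. On the event $\Vert Z_n\Vert_{\mathcal{T}}\le C_1$, both suprema are attained, up to arbitrarily small slack, at points with $\Vert\delta\Vert_2\le 2C_1$. On the ball of radius $3C_1$, the map $\delta\mapsto Q_n(\delta,t)$ is Lipschitz with constant $2\Vert Z_n(t)\Vert_2+2\cdot 3C_1\le 8C_1$, uniformly in $t$.

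It therefore suffices to show that
\[
\eta_n=\sup_{t\in\mathcal{T}}\max\Big\{\sup_{\delta\in\Delta_n(t),\Vert\delta\Vert_2\le 2C_1}\operatorname{dist}(\delta,\Delta(t)),\ \sup_{\delta\in\Delta(t),\Vert\delta\Vert_2\le 2C_1}\operatorname{dist}(\delta,\Delta_n(t))\Big\}\to 0,
\]
which is a deterministic statement. Given $\eta_n\to0$, each near-maximizer in one set has a point in the other set within $\eta_n$, still inside the ball of radius $3C_1$. The Lipschitz bound then gives
\[
|\tilde X_n(t)-\check X_n(t)|\le 8C_1\eta_n+o(1)
\]
uniformly in $t$ on that event.

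\textbf{Step 3: the set convergence $\eta_n\to0$.} Write $\Delta_n(t)=A(t)^\tsp\{s_n^{-1}(\Xi(t)-\xi_0)\}$ with $s_n=n^{-1/2}$. By Assumption~\ref{assm:inf_bound}, $\Vert A(t)\Vert\le\bar\kappa^{1/2}$ and $\Vert A(t)^{-1}\Vert\le\underline\kappa^{-1/2}$ uniformly in $t$.

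A point $\delta\in\Delta_n(t)$ with $\Vert\delta\Vert_2\le 2C_1$ corresponds to $u=A(t)^{-\tsp}\delta\in s_n^{-1}(\Xi(t)-\xi_0)$, and $u$ lies in the ball of radius $2C_1\underline\kappa^{-1/2}$. Distances transform with factor at most $\bar\kappa^{1/2}$ in one direction and $\underline\kappa^{-1/2}$ in the other. So $\eta_n\to0$ reduces to
\[
\sup_{t\in\mathcal{T}} d\big[s_n^{-1}\{\Xi(t)-\xi_0\},C(t)\big]\to0
\]
restricted to bounded regions, and this is exactly Assumption~\ref{assm:cone}.

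\textbf{Main obstacle.} I expect the main obstacle to be matching this step with the precise definition of the set distance $d$ in the Supplementary material. If $d$ is a Hausdorff-type distance localized to balls, for example a sum or supremum over radii of truncated Hausdorff distances as in Geyer's treatment, I will need one more check. The localization radius on the $u$-scale must be enlarged by the factor $\underline\kappa^{-1/2}$, and I must confirm that $d\to0$ implies vanishing two-sided excess within any fixed ball. The boundary effect, where points near the ball edge have nearest neighbours just outside it, is handled by the slack between the radii $2C_1$ and $3C_1$.
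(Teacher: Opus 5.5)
Your proposal is correct and is essentially the paper's argument. The paper also restricts to the event $\Vert Z_n\Vert_{\mathcal{T}}\le C_1$ and reduces everything to the uniform truncated set convergence of Lemma~\ref{lem:set_dist}, which it derives from Assumption~\ref{assm:cone} by exactly your transfer through $A(t)^\tsp$ (radii scaled by $\underline{\kappa}^{-1/2}$, distances by $\bar{\kappa}^{1/2}$); the check you flag as the main obstacle follows at once from $2^{-M}\min\{d_M,1\}\le d$ and monotonicity of $d_M$ in $M$. The one difference is that the paper writes each supremum as $\Vert Z_n(t)\Vert_2^2$ minus the squared distance from $Z_n(t)$ to the set and compares projection distances, whereas you compare near-maximizers via the Lipschitz bound on $Q_n$, and your explicit $2C_1$ versus $3C_1$ radius bookkeeping is if anything more careful than the paper's claim $\Vert P_{\Delta_n(t)}z\Vert_2\le M$, which for nonconvex $\Delta_n(t)$ only holds with $2M$.
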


\begin{proof}
That $\tilde{X}_n \in F(\mathcal{T})$ is already given by Lemma
\ref{lem:Xtilde_Xn}. That $\check{X}_n \in F(\mathcal{T})$ follows from an
almost identical argument, and will be established in the proof of Lemma
\ref{lem:checkX_X}.

Since $\Xi(t)$ is closed by Assumption \ref{assm:closed}, so is $\Delta_n(t)$,
and therefore the projection onto it exists. Similarly, by Assumption
\ref{assm:cone}, the projection onto $\Delta(t)$ exists. Thus, as in
\eqref{eq:X-process}, we get $\tilde{X}_n(t) = \Vert
Z_n(t)\Vert_2^2 - \Vert Z_n(t) - P_{\Delta_n(t)}Z_n(t)\Vert_2^2$ and $\check{X}_n(t)= \Vert Z_n(t)\Vert_2^2 - \Vert
Z_n(t) - P_{\Delta(t)}Z_n(t)\Vert_2^2$. Thus, for outcomes where $\Vert
Z_n\Vert_{\mathcal{T}} \leq C_1$ we get that  $\sup_{t \in \mathcal{T}}|\tilde{X}_n(t) - \check{X}_n(t)|$
equals
\begin{align*} 
    \sup_{t\in \mathcal{T}}|\Vert Z_n(t) - P_{\Delta(t)}Z_n(t)\Vert_2^2 - 
        \Vert Z_n(t) - P_{\Delta_n(t)}Z_n(t)\Vert_2^2| \leq \sup_{t\in\mathcal{T}, \Vert z\Vert_2 \leq C_1} \left|\Vert z - P_{\Delta(t)}z\Vert_2^2 - \Vert z - P_{\Delta_n(t)}z\Vert_2^2\right|,
\end{align*}
which tends to zero by Lemma \ref{lem:set_dist}. Since $\Vert
Z_n\Vert_{\mathcal{T}} \Lto \Vert Z\Vert_{\mathcal{T}}$ by continuous mapping
theorem and the limit is tight by Assumption \ref{assm:zn_conv}, $\Vert Z_n\Vert_{\mathcal{T}}$ is asymptotically
tight. Thus, the outer probability of
$\Vert Z_n\Vert_{\mathcal{T}} > C_1$ can be made arbitrarily small
asymptotically by choosing $C_1$ large enough, which completes the proof.
\end{proof}

\begin{lemma} \label{lem:checkX_X} Under Assumptions
    \ref{assm:inf_bound}--\ref{assm:cone}, $\check{X}_n$ is a random element of
    $F(\mathcal{T})$ for all $n$, $X$ is a Borel-measurable random element of
    $F(\mathcal{T})$, and $\check{X}_n \Lto X$ in $F(\mathcal{T})$ as
    $n\to\infty$.
\end{lemma}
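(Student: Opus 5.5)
The idea is to write $\check{X}_n = g(Z_n)$ and $X = g(Z)$ for a single deterministic map $g: F^p(\mathcal{T}) \to F(\mathcal{T})$, and then invoke the Continuous Mapping Theorem for outer weak convergence \citep{vandervaart2023weak}. Set
\[
g(z)(t) = \sup_{\delta\in\Delta(t)}\{2\delta^\tsp z(t) - \Vert\delta\Vert_2^2\}.
\]
By construction, $\check{X}_n = g(Z_n)$ and $X = g(Z)$.

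First I show that $g$ maps $F^p(\mathcal{T})$ into $F(\mathcal{T})$. By Assumption \ref{assm:cone}, $C(t)$ is a closed cone, so $0\in C(t)$ and hence $0\in\Delta(t) = A(t)^\tsp C(t)$. It follows that $g(z)(t)\geq 0$. Next apply the Cauchy--Schwarz bound \eqref{eq:large_delta_negative}: any $\delta$ with $\Vert\delta\Vert_2 > 2\Vert z(t)\Vert_2$ gives a negative value, so such $\delta$ can be discarded. Consequently
\[
0\le g(z)(t)\le 8\Vert z(t)\Vert_2^2 \le 8\Vert z\Vert_{\mathcal{T}}^2 ,
\]
and $g(z)$ is bounded. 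Applied to $Z_n$, which lies in $F^p(\mathcal{T})$ by Assumption \ref{assm:zn_conv}, this shows $\check{X}_n\in F(\mathcal{T})$. This is also the fact deferred from Lemma \ref{lem:tildeX_checkX}.

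Second I show that $g$ is continuous, indeed locally Lipschitz. Fix $z, z'$ with $\Vert z\Vert_{\mathcal{T}},\Vert z'\Vert_{\mathcal{T}}\le C$. By the previous step, for each $t$ both suprema can be restricted to $\{\delta\in\Delta(t):\Vert\delta\Vert_2\le 2C\}$. On that set the two objectives differ by $2\delta^\tsp\{z(t)-z'(t)\}$, which is at most $4C\Vert z-z'\Vert_{\mathcal{T}}$ in absolute value. Using $|\sup f-\sup h|\le\sup|f-h|$ then gives
\[
\Vert g(z)-g(z')\Vert_{\mathcal{T}}\le 4C\Vert z-z'\Vert_{\mathcal{T}} .
\]
Hence $g$ is continuous on all of $F^p(\mathcal{T})$. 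This argument needs neither the closedness of $\Delta(t)$ nor attainment of the supremum. Assumption \ref{assm:inf_bound} only serves to make $A(t)$ well defined and invertible, so $\Delta(t)$ is a well-defined cone; Assumption \ref{assm:closed} is not needed here.

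Third I handle measurability and the limit. Since $Z$ is Borel measurable and $g$ is continuous, $X = g(Z)$ is a Borel-measurable random element of $F(\mathcal{T})$. Because $g$ is continuous everywhere, it is in particular continuous at every point of the support of the tight limit $Z$. The Continuous Mapping Theorem for possibly nonmeasurable $Z_n$, combined with Assumption \ref{assm:zn_conv}, then yields $\check{X}_n = g(Z_n)\Lto g(Z) = X$ in $F(\mathcal{T})$. Tightness of $X$ also follows, as the continuous image of a tight element.

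The main point needing care is the continuity step, in particular that the constant is uniform in $t$. This works because the a priori bound $\Vert\delta\Vert_2\le 2\Vert z\Vert_{\mathcal{T}}$ does not depend on $t$, so no uniformity assumption on the cones $C(t)$ is required for this lemma.
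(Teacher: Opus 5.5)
Your proposal is correct and follows essentially the same route as the paper: the same map $g$, boundedness from $0\in\Delta(t)$, local Lipschitz continuity obtained by restricting the supremum to a ball whose radius depends only on a sup-norm bound (the paper uses radius $3C_1$ and constant $6C_1$ where you use $2C$ and $4C$), and then the continuous mapping theorem. The only cosmetic difference is that you bound $g(z)$ via the Cauchy--Schwarz truncation rather than the paper's identity $g(z)(t)=\Vert z(t)\Vert_2^2-\inf_{\delta\in\Delta(t)}\Vert z(t)-\delta\Vert_2^2$. Your remark that Assumption \ref{assm:closed} is not actually used in this lemma is accurate.
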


\begin{proof}
   Consider the mapping $g:F^p(\mathcal{T}) \to F(\mathcal{T})$ defined for $z
   \in F^p(\mathcal{T})$ by $g(z)(t) = \sup_{\delta \in \Delta(t)}(2\delta^\tsp
   z(t) - \Vert \delta\Vert_2^2)$. Indeed, $g(z) \in F(\mathcal{T})$
   for any $z \in F^p(\mathcal{T})$ since, by the triangle inequality and $0\in
   \Delta(t)$, arguing as in \eqref{eq:X-process},
\[
   | g(z)(t)| \leq \sup_{t\in\mathcal{T}} \Vert z(t)\Vert_2^2 + 
    \sup_{t\in\mathcal{T}}\inf_{\delta \in \Delta(t)} \Vert z(t) - 
    \delta\Vert_2^2 \leq 2\Vert z\Vert_{\mathcal{T}}^2 < \infty.
\]
This also shows that $X = g(Z)$ and $\check{X}_n = g(Z_n)$ are random elements of
$F(\mathcal{T})$ since $Z$ and $Z_n$ are random elements of $F^p(\mathcal{T})$
by Assumption \ref{assm:zn_conv}.

To show $g$ is continuous, pick an arbitrary $z_{\infty} \in F^p(\mathcal{T})$
and $z_m \in F^p(\mathcal{T})$ such that $\Vert z_{\infty} -
z_m\Vert_{\mathcal{T}} \to 0$ as $m\to \infty$. Pick a finite $C_1 > \Vert
z_{\infty}\Vert_{\mathcal{T}}$ so that $\Vert z_m\Vert_{\mathcal{T}} < C_1$ for
all large enough $m$. Then, as argued in the proof of Lemma \ref{lem:Xtilde_Xn},
any $\delta$ with $\Vert \delta\Vert_2 > 3C_1$ leads to $\delta^\tsp z_m(t) -
\Vert \delta\Vert_2^2 < 0$, which since $0 \in \Delta(t)$ implies
\[
    g(z_m)(t) = \sup_{\delta \in \Delta(t) \cap \bar{B}_{3C_1}} (2\delta^\tsp z_m(t) 
    - \Vert \delta\Vert_2^2),
\]
also when $m = \infty$. Thus, using that the difference of suprema is no greater than the supremum of the difference, we get
\begin{align*}
   |g(z_m)(t) - g(z_{\infty})(t)| 
    \leq \sup_{\delta \in \Delta(t) \cap \bar{B}_{3C_1}} |2\delta^\tsp (z_m(t) - z_{\infty}(t))|
    \leq 6C_1\, \Vert z_m(t) - z_{\infty}(t)\Vert_2.
\end{align*}
Taking the supremum over $t\in \mathcal{T}$ on both sides and sending
$m\to\infty$ shows $\Vert g(z_m) - g(z_{\infty})\Vert_{\mathcal{T}} \to 0$, so
$g$ is continuous. Thus, $X$ is Borel-measurable since $Z$ is, and since $Z_n
\Lto Z$ by Assumption \ref{assm:zn_conv}, the continuous mapping theorem gives
$\check{X}_n = g(Z_n) \Lto g(Z) = X$, which completes the proof.
\end{proof}

With these lemmas in hand, the proof of Theorem \ref{thm:main_nonsing} is a
straightforward application of Slutsky's lemma and the continuous mapping
theorem given in the Supplementary material.

\subsection{Likelihood ratio test with composite null} \label{sec:composite}

In this section we will generalize to a composite null hypothesis, and the corresponding alternative hypothesis, of the form 
\beq
\begin{array}{ll}
H_0: & \xi \in \Xi_0,\\
H_1: & \xi \in \Xi \setminus \Xi_0,
\end{array}
\lb{H0H1}
\eeq
where $\Xi_0\subseteq \R^{p}$ is such that, for each $\xi \in \Xi_0$, there exists a $t \in
\mathcal{T}$ with $(\xi, t) \in \Theta$. For each $t \in \mathcal{T}$, let
$\Xi^0(t)$ be the set of $\xi \in \Xi_0$ such that $(\xi, t) \in \Theta$. Thus,
$\Xi_0 = \cup_{t \in \mathcal{T}} \Xi^0(t)$, and when $\ell_n$ is a log-likelihood the likelihood ratio test
statistic for testing $H_0$ against $H_1$ in \re{H0H1} is
\begin{align} \label{eq:Lambda_composite}
    \Lambda_n &= 2 \left\{\sup_{t \in \mathcal{T}}\sup_{\xi \in \Xi(t)}\ell_n(\xi, t) - \sup_{t \in \mathcal{T}}\sup_{\xi \in \Xi^0(t)}\ell_n(\xi, t)\right\}.
\end{align}
To obtain the asymptotic distribution for this case, define $X_n^0$ like $X_n$ but with $\Xi^0(t)$ in place of $\Xi(t)$; that is,
\begin{align}
    X_n^0(t) = 2\sup_{\xi \in \Xi^0(t)} \{\ell_n(\xi, t) - \ell_n(\xi_0, t)\}.
\end{align}
The joint convergence of $(X_n, X_n^0)$, given by the following result, will
lead to the asymptotic distribution of $\Lambda_n$ under the null. Assumption \ref{assm:cone} can only apply to $\Xi^0(t)$ if $\xi_0 \in \Xi^0(t)$
for all $t \in \mathcal{T}$. In a hypothesis testing context, this means the
null hypothesis must be correct regardless of the value of the nuisance
parameter.

\begin{lemma} \label{thm:joint_conv}
    Under Assumptions \ref{assm:Xn_bdd}--\ref{assm:cone}, with Assumptions \ref{assm:closed} and \ref{assm:cone} also holding for $\Xi^0(t)$ and a cone $C^0(t)$ in place of $\Xi(t)$ and $C(t)$, respectively, $(X_n, X_n^0) \Lto (X, X^0)$ in $F^2(\mathcal{T})$. Here $X^0=\{X^0(t);\, t\in\cT\}$, where $X^0(t)$ is defined as in \re{eq:X-process}, with $\De^0(t)=A(t)^\tsp C^0(t)$ instead of $\De(t)=A(t)^\tsp C(t)$. 
\end{lemma}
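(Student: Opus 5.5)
We follow the same three-step decomposition used for Theorem~\ref{thm:main_nonsing}, carried out jointly for the pair. Define $\tilde{X}_n^0$ and $\check{X}_n^0$ as in \eqref{eq:X_tilde_check}, with $\Delta_n^0(t) = n^{1/2}A(t)^\tsp\{\Xi^0(t)-\xi_0\}$ and $\Delta^0(t)=A(t)^\tsp C^0(t)$ in place of $\Delta_n(t)$ and $\Delta(t)$. The plan is to show
\[
\Vert (X_n,X_n^0) - (\check{X}_n,\check{X}_n^0)\Vert = o_{p^*}(1) \quad\text{in } F^2(\mathcal{T}),
\]
and then that $(\check{X}_n,\check{X}_n^0)\Lto(X,X^0)$. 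Slutsky's lemma in the nonmeasurable (outer-probability) form then gives the claim, exactly as in the deferred proof of Theorem~\ref{thm:main_nonsing}.

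For the first part it suffices to treat each coordinate separately, because a sum of two $o_{p^*}(1)$ terms is $o_{p^*}(1)$ by subadditivity of outer probability. For the first coordinate, Lemmas~\ref{lem:Xtilde_Xn} and \ref{lem:tildeX_checkX} apply directly. For the second coordinate we reuse those lemmas with $\Xi^0(t)$ in place of $\Xi(t)$, so we must check their hypotheses for the restricted problem.
\begin{itemize}
\item Assumption~\ref{assm:equicont} transfers because $\Xi^0(t)\subseteq\Xi(t)$, so the supremum over the smaller ball $\bar{B}_{c_3}(\xi_0;t)\cap\Xi^0(t)$ is dominated.
\item Assumptions~\ref{assm:inf_bound} and \ref{assm:zn_conv} do not involve the parameter set.
\item Assumptions~\ref{assm:closed} and \ref{assm:cone} are assumed for $\Xi^0$.
\item Assumption~\ref{assm:consistency} is the one genuine check. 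An approximate maximizer $\hat\xi_n^0(t)$ over $\Xi^0(t)$ need not be an approximate maximizer over $\Xi(t)$, so consistency does not transfer automatically.
\end{itemize}
To handle this, we argue that restricted approximate maximizers are still at distance $O_{p^*}(n^{-1/2})$. Since $\xi_0\in\Xi^0(t)$, we have $\ell_n(\hat\xi_n^0(t),t)-\ell_n(\xi_0,t)\ge -O_{p^*}(1)$ uniformly in $t$. Combining this with the uniform quadratic expansion \eqref{eq:ln_expand} and the remainder control, this lower bound should force $\hat\xi^0_n(t)$ into an $n^{-1/2}$-neighbourhood of $\xi_0$ via the argument of Lemma~\ref{lem:rate}. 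Failing that, we would interpret the hypothesis ``Assumptions \ref{assm:Xn_bdd}--\ref{assm:cone}'' as applying to the restricted problem as well, which is the reading most consistent with the statement. Note also that Lemma~\ref{lem:Xtilde_Xn} uses consistency only to bound $\Vert\hat\delta_n\Vert_{\mathcal{T}}$, so a rate bound for $\hat\xi_n^0$ is all that is actually needed.

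For the second part, consider the map $G:F^p(\mathcal{T})\to F^2(\mathcal{T})$ given by $G(z)=(g(z),g^0(z))$. Here $g$ is as in the proof of Lemma~\ref{lem:checkX_X}, and $g^0$ is the same map with $\Delta^0(t)$ in place of $\Delta(t)$. That proof shows both $g$ and $g^0$ are continuous, with the explicit Lipschitz-on-bounded-sets estimate $6C_1\Vert z_m-z_\infty\Vert_{\mathcal{T}}$, so $G$ is continuous. The key point is that both coordinates are driven by the \emph{same} $Z_n$. Hence
\[
(\check X_n,\check X_n^0)=G(Z_n)\Lto G(Z)=(X,X^0)
\]
by Assumption~\ref{assm:zn_conv} and the continuous mapping theorem. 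The same argument gives Borel measurability of $(X,X^0)$.

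The main obstacle is the transfer of Assumption~\ref{assm:consistency} (equivalently, of the rate in Lemma~\ref{lem:rate}) to maximizers over $\Xi^0(t)$. Everything else is a coordinatewise repetition of the earlier lemmas followed by the joint continuous mapping step.
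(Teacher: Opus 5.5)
Your proposal follows the paper's proof essentially step for step. You approximate $X_n$ and $X_n^0$ coordinatewise by $\check{X}_n$ and $\check{X}_n^0$ via Lemmas~\ref{lem:Xtilde_Xn} and \ref{lem:tildeX_checkX}. You then use continuity of $z\mapsto(g(z),g_0(z))$, with both coordinates driven by the same $Z_n$, and finish with the continuous mapping theorem and Slutsky's lemma. The paper does not check Assumption~\ref{assm:consistency} for the restricted problem either. It simply invokes ``the corresponding assumptions with $\Xi^0(t)$ in place of $\Xi(t)$'', which is your fallback reading.

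The first patch you sketch does not work as written. The argument of Lemma~\ref{lem:rate} uses Assumption~\ref{assm:consistency} precisely to place the near-maximizer inside $\bar{B}_{c_3}(\xi_0;t)$, where the remainder is controlled. The local expansion alone cannot exclude near-maximizers far from $\xi_0$.

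The fix is short. Suppose $\ell_n(\hat{\xi}^0_n(t),t)\geq\sup_{\xi\in\Xi^0(t)}\ell_n(\xi,t)-S_n$ with $S_n=O_{p^*}(1)$ free of $t$. Your lower bound $\ell_n(\hat{\xi}^0_n(t),t)\geq\ell_n(\xi_0,t)-S_n$ is the right starting point. The missing step is to compare $\ell_n(\xi_0,t)$ with the \emph{unrestricted} supremum. Since $X_n(t)=2\{\sup_{\xi\in\Xi(t)}\ell_n(\xi,t)-\ell_n(\xi_0,t)\}$, this gives
\[
\ell_n(\hat{\xi}^0_n(t),t)\geq\sup_{\xi\in\Xi(t)}\ell_n(\xi,t)-\bigl(S_n+\Vert X_n\Vert_{\mathcal{T}}/2\bigr).
\]
Here $\Vert X_n\Vert_{\mathcal{T}}=O_{p^*}(1)$, by Lemma~\ref{lem:Xtilde_Xn} together with $0\leq\tilde{X}_n(t)\leq\Vert Z_n(t)\Vert_2^2$, or directly by Theorem~\ref{thm:main_nonsing}.

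Moreover $\hat{\xi}^0_n(t)\in\Xi^0(t)\subseteq\Xi(t)$, and the slack $S_n+\Vert X_n\Vert_{\mathcal{T}}/2$ does not depend on $t$. So $\hat{\xi}^0_n$ is an approximate maximizer over $\Xi(t)$ in the sense of Assumption~\ref{assm:consistency}. Lemma~\ref{lem:rate} for the full problem then already gives $\Vert\hat{\xi}^0_n-\xi_0\Vert_{\mathcal{T}}=O_{p^*}(n^{-1/2})$. Hence the restricted version of Lemma~\ref{lem:Xtilde_Xn} holds under the hypotheses exactly as stated, and no reinterpretation is needed.
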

\begin{proof}
The proof follows the same approximation and continuous mapping steps as for
Theorem \ref{thm:main_nonsing}. Under the assumptions of Theorem
\ref{thm:main_nonsing}, and the corresponding assumptions with $\Xi^0(t)$ in place
of $\Xi(t)$, the lemmas leading up to Theorem \ref{thm:main_nonsing} yield
$\Vert X_n-\check{X}_n\Vert_{\mathcal{T}}=o_{\pr^*}(1)$ and
$\Vert X_n^0-\check{X}_n^0\Vert_{\mathcal{T}}=o_{\pr^*}(1)$, where $\check{X}_n^0(t)=\sup_{\delta\in\Delta^0(t)}(2\delta^\tsp Z_n(t)-\Vert\delta\Vert_2^2)$.
For $z\in F^p(\mathcal{T})$, define maps $g,g_0:F^p(\mathcal{T})\to F(\mathcal{T})$ by
\[
    g(z)(t)=\sup_{\delta\in\Delta(t)}(2\delta^\tsp z(t)-\Vert\delta\Vert_2^2),
    \qquad
    g_0(z)(t)=\sup_{\delta\in\Delta^0(t)}(2\delta^\tsp z(t)-\Vert\delta\Vert_2^2).
\]
Then $\check{X}_n=g(Z_n)$ and $\check{X}_n^0=g_0(Z_n)$.
By the argument in Lemma \ref{lem:checkX_X}, applied to $\Delta$ and, separately,
to $\Delta^0$, both $g$ and $g_0$ are continuous. Hence, the map
$z\mapsto (g(z),g_0(z))$ from $F^p(\mathcal{T})$ to $F^2(\mathcal{T})$ is
continuous, and since $Z_n\Lto Z$ in $F^p(\mathcal{T})$ by Assumption
\ref{assm:zn_conv}, the continuous mapping theorem gives
$(\check{X}_n,\check{X}_n^0)=(g(Z_n),g_0(Z_n))\Lto (g(Z),g_0(Z))=(X,X^0)$ in
$F^2(\mathcal{T})$. The $o_{\pr^*}(1)$ approximations then imply
$(X_n,X_n^0)\Lto (X,X^0)$ by Slutsky's lemma.
\end{proof}

\begin{theorem}\label{thm:composite}
    Under the conditions of Lemma~\ref{thm:joint_conv}, and assuming $t \mapsto
    \Xi^0(t)$ and $t \mapsto \ell_n(\xi, t)$, for all $\xi \in \Xi_0$, are
    constant, the random variable defined in \re{eq:Lambda_composite} converges weakly $\Lambda_n \Lto \Lambda = \Vert X - X^0\Vert_{\mathcal{T}}$ as $n\to\infty$.
\end{theorem}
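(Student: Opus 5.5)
The idea is to write $\Lambda_n$ as a continuous functional of the pair $(X_n, X_n^0)$ and then apply Lemma~\ref{thm:joint_conv} together with the continuous mapping theorem.

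First I rewrite \eqref{eq:Lambda_composite} by subtracting and adding $2\ell_n(\xi_0,t)$. Since $\xi_0\in\Xi^0(t)$ for every $t$, we have $\xi_0\in\Xi_0$, so by assumption $\ell_n(\xi_0,t)=:\ell_n(\xi_0)$ does not depend on $t$. This gives
\[
\Lambda_n = \sup_{t\in\mathcal{T}} X_n(t) - \sup_{t\in\mathcal{T}} X_n^0(t).
\]
Next, because $\Xi^0(t)=\Xi^0$ is constant in $t$ and $\ell_n(\xi,t)$ is constant in $t$ for every $\xi\in\Xi_0$, the quantity $X_n^0(t)=2\sup_{\xi\in\Xi^0}\{\ell_n(\xi,t)-\ell_n(\xi_0)\}$ is itself constant in $t$. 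Call this constant $x_n^0$. Then $\sup_t X_n(t)-x_n^0=\sup_t\{X_n(t)-X_n^0(t)\}$.

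This supremum is nonnegative. The reason is that $\Xi^0(t)\subseteq\Xi(t)$, since $(\xi,t)\in\Theta$ forces $\xi\in\Xi(t)$. Hence $X_n(t)\ge X_n^0(t)$ pointwise, and therefore
\[
\Lambda_n=\sup_t\{X_n(t)-X_n^0(t)\}=\Vert X_n-X_n^0\Vert_{\mathcal{T}},
\]
where the norm on $F(\mathcal{T})$ is the sup of absolute values.

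The map $(x,x^0)\mapsto\Vert x-x^0\Vert_{\mathcal{T}}$ from $F^2(\mathcal{T})$ to $\R$ is Lipschitz, hence continuous. Lemma~\ref{thm:joint_conv} gives $(X_n,X_n^0)\Lto(X,X^0)$ with a Borel-measurable limit, as in Lemma~\ref{lem:checkX_X}. The continuous mapping theorem then yields $\Lambda_n\Lto\Vert X-X^0\Vert_{\mathcal{T}}$. The limit is Borel-measurable as a continuous image of a Borel-measurable element.

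The only delicate step is the identification of $\Lambda_n$ with the norm. This needs two things. The first is the set-up remark that $\xi_0\in\Xi^0(t)$ for all $t$, so that the $\ell_n(\xi_0,\cdot)$ terms cancel. The second is the inclusion $\Xi^0(t)\subseteq\Xi(t)$, so that taking absolute values is harmless.

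For completeness I would also check that the limit $X-X^0$ is nonnegative, so that $\Vert X-X^0\Vert_{\mathcal{T}}=\sup_t\{X(t)-X^0(t)\}$. This follows if $C^0(t)\subseteq C(t)$, which holds because $\Xi^0(t)\subseteq\Xi(t)$ and the $C$'s are limits of the rescaled sets. It is not needed for the convergence statement itself. I would also note that $X^0$ need not be constant in $t$ in the limit, since $A(t)$ varies with $t$. The stated form $\Vert X-X^0\Vert_{\mathcal{T}}$ is therefore exactly what the continuous mapping argument delivers.
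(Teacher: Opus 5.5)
Your proof is correct and follows the paper's argument. You cancel the $t$-free term $\ell_n(\xi_0,\cdot)$, use that $X_n^0$ is constant in $t$ to get $\Lambda_n=\sup_t\{X_n(t)-X_n^0(t)\}=\Vert X_n-X_n^0\Vert_{\mathcal{T}}$, and conclude with Lemma~\ref{thm:joint_conv} and the continuous mapping theorem. The only difference is that you apply the map $(x,x^0)\mapsto\Vert x-x^0\Vert_{\mathcal{T}}$ directly, whereas the paper applies $\Vert x\Vert_{\mathcal{T}}-\Vert x^0\Vert_{\mathcal{T}}$ and then identifies the limit with $\Vert X-X^0\Vert_{\mathcal{T}}$, a step your version avoids. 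Your side remark that $X^0$ need not be constant in $t$ is wrong, though harmless. Each $X_n^0$ is constant and the constant functions form a closed subset of $F(\mathcal{T})$, so by the portmanteau theorem $X^0$ is almost surely constant despite the dependence of $A(t)$ on $t$.
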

\begin{proof}
    Since $\Xi^0(t)$ does not depend on $t$, it is equal to $\Xi_0$. Since
    also $\ell_n(\xi, t)$ is constant in $t$ for $\xi \in \Xi_0$, $X_n^0(t) = 2
    \{\sup_{\xi \in \Xi_0}\ell_n(\xi, t) - \ell_n(\xi_0, t)\}$ in fact does not
    depend on $t$. Thus, since $\xi_0 \in \Xi_0$ by Assumption \ref{assm:cone}, we can move the supremum over $t$ outside the brackets in \eqref{eq:Lambda_composite}, add and subtract $\ell_n(\xi_0)$, and get
    \[
        \Lambda_n = \sup_{t \in \mathcal{T}}\{X_n(t) - X_n^0(t)\} = \Vert X_n - X_n^0\Vert_{\mathcal{T}} = \Vert X_n\Vert_{\mathcal{T}} - \Vert X_n^0\Vert_{\mathcal{T}}.
    \]
    By Lemma~\ref{thm:joint_conv} and the continuous mapping theorem, $\Vert X_n\Vert_{\mathcal{T}} - \Vert X_n^0\Vert_{\mathcal{T}}$ converges in law to $\Vert X \Vert_{\mathcal{T}} - \Vert X^0 \Vert_{\mathcal{T}} = \Vert X - X^0\Vert_{\mathcal{T}}$.
\end{proof}

An interesting special case of the setting in Theorem \ref{thm:composite} is
when $C^0 = \lim_{s\downarrow 0} s^{-1}(\Xi_0 - \xi_0)$ is a linear subspace.
This happens, for example, when some components of $\xi$ are restricted by the
null hypothesis, but other components are free. For instance, if $\xi =
(\xi^{(1)}, \xi^{(2)}) \in \Xi(t) = \Xi = [0, \infty)^{p_1}\times\R^{p_2}$, and
$\Xi_0 = \{\xi \in \Xi : \xi^{(1)} = \xi^{(1)}_0\}$, then $C^0 =
\{0\}\times\R^{p_2}$. In such settings, the asymptotic distribution is similar
to when testing a simple null of dimension $p_1$, as formalized by the following
result; compare to Corollary \ref{cor:lrt_nonsing}.

\begin{theorem}\label{thm:decomposition} 
    Assume the conditions of Theorem \ref{thm:composite} hold, and that for each $t\in\mathcal{T}$,
    $\Delta(t)$ is a closed convex cone and $\Delta^0(t) = A(t)^\tsp C^0$ is
    a linear subspace. Then $K(t) = \Delta(t)\cap\{\Delta^0(t)\}^\perp$ is a
    closed convex cone, $\Delta(t) = \Delta^0(t)\oplus
    K(t)$, and, consequently, $\Lambda = \sup_{t\in\mathcal{T}}\|P_{K(t)}Z(t)\|_2^2$.
\end{theorem}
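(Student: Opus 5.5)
Fix $t\in\mathcal{T}$ and write $L=\Delta^0(t)$, a linear subspace, and $D=\Delta(t)$, a closed convex cone. The plan is to establish the algebraic decomposition pointwise in $t$. We then combine it with the representation $\Lambda=\Vert X-X^0\Vert_{\mathcal{T}}$ from Theorem~\ref{thm:composite} and the projection identity \eqref{XtProj}.

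First, $K(t)=D\cap L^\perp$ is a closed convex cone, being the intersection of a closed convex cone with a closed linear subspace. For the decomposition $D=L\oplus K(t)$ we need $L\subseteq D$. This follows because $\Xi_0\subseteq\Xi(t)$ for the relevant $t$: we have $\Xi^0(t)\subseteq\Xi(t)$ by definition, so the tangent cones satisfy $C^0\subseteq C(t)$, since both are limits in the set distance of Assumption~\ref{assm:cone} of nested rescaled sets. Hence $L=A(t)^\tsp C^0\subseteq A(t)^\tsp C(t)=D$. Since $D$ is a convex cone containing the subspace $L$, it satisfies $D+L=D$: for $d\in D$ and $l\in L$ we have $d+l=2(d/2+l/2)\in D$. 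Now take $d\in D$ and write $d=P_Ld+(d-P_Ld)$. We have $-P_Ld\in L\subseteq D$, so $d-P_Ld\in D+L=D$, and also $d-P_Ld\in L^\perp$. Hence $d\in L+K(t)$. Conversely, $L+K(t)\subseteq D+D=D$. The sum is direct and orthogonal because $K(t)\subseteq L^\perp$.

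Next, we compute projections. For $z\in\R^p$ and $\delta=l+k$ with $l\in L$ and $k\in K(t)$, orthogonality gives
\[
\Vert z-\delta\Vert_2^2=\Vert P_Lz-l\Vert_2^2+\Vert P_{L^\perp}z-k\Vert_2^2 .
\]
Minimising separately yields $P_Dz=P_Lz+P_{K(t)}P_{L^\perp}z$. Moreover, $P_{K(t)}P_{L^\perp}z=P_{K(t)}z$. To see this, note that $\Vert z-k\Vert_2^2=\Vert P_Lz\Vert_2^2+\Vert P_{L^\perp}z-k\Vert_2^2$ for $k\in L^\perp$, so the two minimisation problems over $K(t)$ have the same minimiser. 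By \eqref{XtProj}, which applies because $D$ and $L$ are closed convex cones, together with Pythagoras, we get
\[
X(t)=\Vert P_DZ(t)\Vert_2^2=\Vert P_LZ(t)\Vert_2^2+\Vert P_{K(t)}Z(t)\Vert_2^2,\qquad X^0(t)=\Vert P_LZ(t)\Vert_2^2 .
\]
Therefore $X(t)-X^0(t)=\Vert P_{K(t)}Z(t)\Vert_2^2\ge 0$. Taking the supremum over $t$ and using Theorem~\ref{thm:composite} gives $\Lambda=\sup_t\Vert P_{K(t)}Z(t)\Vert_2^2$. Nonnegativity ensures that the norm $\Vert\cdot\Vert_{\mathcal{T}}$ coincides with the plain supremum.

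The main obstacle is the inclusion $L\subseteq D$, i.e.\ $C^0\subseteq C(t)$. It relies on the set distance $d$ of the Supplementary material being such that limits preserve inclusion, for instance a Hausdorff-type distance on closed sets. Since $C(t)$ is closed, any point of $C^0$ is a limit of points of $s^{-1}(\Xi^0(t)-\xi_0)\subseteq s^{-1}(\Xi(t)-\xi_0)$, and these points approach $C(t)$, so the point lies in $C(t)$. I would verify this against that definition, using local versions on balls if $d$ is defined via truncation. The rest is elementary convex geometry.
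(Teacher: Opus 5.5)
Your proposal is correct and follows essentially the same route as the paper's proof. The paper also gets $K(t)$ closed and convex as an intersection, obtains $\Delta(t)=\Delta^0(t)\oplus K(t)$ by subtracting $P_{\Delta^0(t)}\delta$ and using closure of the cone under addition, and splits $\Vert z-\delta\Vert_2^2$ orthogonally to reach $X(t)-X^0(t)=\Vert P_{K(t)}Z(t)\Vert_2^2$.

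You also fill in two points the paper leaves implicit. First, you justify $\Delta^0(t)\subseteq\Delta(t)$ via $\Xi^0(t)\subseteq\Xi(t)$ and closedness of $C(t)$; this does go through with the paper's $d_M$. Second, you note that nonnegativity lets $\Vert\cdot\Vert_{\mathcal{T}}$ be replaced by a plain supremum.
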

Since $K(t)$ is a closed convex cone, the same argument as in
\eqref{eq:X-process} shows that the distribution of $X(t)-X^0(t)=\|P_{K(t)}Z(t)\|_2^2$ is a
mixture of $\chi^2$-distributions when $Z(t)$ is a centered Gaussian process, with weights
determined by the geometry of $K(t)$. Then $\Lambda$ is the supremum of a
$\bar{\chi}^2$-process. If $K(t)$ is polyhedral, defined by $r(t)$ linear
inequality constraints, only the $\chi^2_{p_1-r(t)}, \ldots, \chi^2_{p_1}$ components receive
positive weights.

\begin{remark}
Suppose $\ell_n(\xi,t)$ is a log-likelihood and the conditions of
Theorem \ref{thm:composite} hold. If $\xi=\xi_0$ for some $\xi_0\in\Xi_0$,
Theorem \ref{thm:composite} provides weak convergence of the likelihood ratio
test (LRT) statistic $\La_n$ under an element $\xi_0$ of the composite null
hypothesis $H_0$ in \re{H0H1}. If additionally the conditions of
Theorem \ref{thm:decomposition} hold, the distribution of the limiting LRT
statistic $\La$ in Theorem \ref{thm:decomposition} is typically independent of
the choice of $\xi_0\in\Xi_0$. If $H_0$ is rejected when $\La_n\ge c$, the
significance level satisfies
\beq
\alpha_n(c;\xi_0)  = \pr_{(\xi_0,t)}(\Lambda_n\ge c) \to \alpha(c;\xi_0) = \pr_{(\xi_0,t)}(\Lambda \ge c)
\lb{alphaConv2}
\eeq
as $n\to\infty$ at all continuity points $c$ of the distribution function
$F_\La$ of $\La$, independently of $t\in\cT$. Note that the limiting
significance level $\alpha(c)=\alpha(c;\xi_0)$ typically does not depend on the
choice of $\xi_0\in\Xi_0$ whenever the conditions of
Theorem \ref{thm:decomposition} hold. Suppose instead we have a sequence of
contiguous alternatives $\theta_n=(\xi_n,t_0)$ such that $\xi_n \to \xi_0$ as
$n\to\infty$ for some $\xi_0\in\Xi_0$. Then
$\alpha_n(c;\xi_n,t_0)=\pr_{(\xi_n,t_0)}(\Lambda_n\ge c)$ corresponds to the
power of the LRT and if $\xi_n\to\xi_0$ at an appropriate rate it will converge
to a limiting power function. As mentioned in Section \ref{sec:pd_inf}, it is
also possible to consider limiting averaged local power
functions \citep{anpl1995, an1996}.
\hfill\slut
\end{remark}

\subsection{Singular information} \label{sec:singular}

In some cases the information matrix $I(t)$ is singular for some $t \in
\mathcal{T}$. We partition $\mathcal{T}$ into $\mathcal{T}_s = \{t\in
\mathcal{T}: \emin\{I(t)\} = 0\}$ and $\mathcal{T}_{ns} = \{t\in \mathcal{T}:
\emin\{I(t)\} > 0\}$. We will first give conditions for convergence in law of
$\Lambda_n^{(ns)} = \sup_{t \in \mathcal{T}_{ns}} X_n(t)$, and then extend that
to $\Lambda_n = \sup_{t \in \mathcal{T}} X_n(t)$.

Establishing the asymptotic distribution of $\Lambda_n^{(ns)}$ is more
complicated than applying the conditions from Section \ref{sec:asy_dist}
with $\mathcal{T}$ replaced by $\mathcal{T}_{ns}$. In particular, assuming
something like Assumption \ref{assm:inf_bound} on $\mathcal{T}_{ns}$ is
inappropriate as, in many settings of interest, one can find a sequence $(t_m)$
in $\mathcal{T}_{ns}$ tending to a $t_* \in \mathcal{T}_s$ and get
$\emin\{I(t_m)\} \to 0$ as $m\to \infty$. Our strategy will be to first work on
sets $\mathcal{T}_\epsilon = \{t\in \mathcal{T}: d(t, \mathcal{T}_s) \geq
\epsilon\}$ for $\epsilon > 0$. This requires a metric $d$ on $\mathcal{T}$ and,
for sets $A \subseteq \mathcal{T}$, defining $d(t, A) = \inf_{t'\in A} d(t, t')$.

\begin{assumption} \label{assm:T_metric} The set $\mathcal{T}$ is a metric space
    with metric $d(\cdot, \cdot)$,  $\mathcal{T}_s$ is closed in $\mathcal{T}$,
    and $\mathcal{T}_{ns}$ is dense in $\mathcal{T}$.
\end{assumption}

Closedness of $\mathcal{T}_s$ ensures $d(t, \mathcal{T}_s) > 0$ for every $t
\in \mathcal{T}_{ns}$, so that $\cup_{\epsilon > 0} \mathcal{T}_\epsilon =
\mathcal{T}_{ns}$. Density of $\mathcal{T}_{ns}$ is used when extending the
result from $\mathcal{T}_{ns}$ to all of $\mathcal{T}$ in Corollary
\ref{cor:main_singular_fullT}.

Assumption \ref{assm:equicont} does not control the remainder term
$\tilde{R}_n(\delta, t)$ uniformly over $\mathcal{T}_{ns}$ unless the
eigenvalues of $I(t)$ are bounded away from zero. Since this cannot be assumed
on $\mathcal{T}_{ns}$, we instead make the following assumption (which, under Assumption \ref{assm:inf_bound} is equivalent to Assumption \ref{assm:equicont}):

\begin{manualtheorem}{2$^\star$}{} \label{assm:Rtilde_bound} For every $t
\in \mathcal{T}_{ns}$, \eqref{eq:X_n-process-delta} holds with a remainder term
$\tilde{R}_n(\delta, t)$ that satisfies the following: for every $c_1, c_2 > 0$,
there exists a $c_3 > 0$ such that
    \[
        \limsup_{n\to \infty} \pr^*\left(\sup_{t \in \mathcal{T}_{ns}} \sup_{\delta \in \tilde{B}(n, t, c_3)} \left|\tilde{R}_n(\delta, t)\right| > c_1\right) < c_2,
    \]
    where $\tilde{B}(n, t, c_3) = \{\delta \in \Delta_n(t): \Vert \delta\Vert_2 \leq \sqrt{n}c_3\}$.
\end{manualtheorem}

Assumption \ref{assm:sing_inf} modifies Assumption \ref{assm:inf_bound} by
allowing $\underline{\kappa}$ to depend on $\epsilon$. The lower bound on
$\emin\{I(t)\}$, which is usually the challenging part, holds if $I(\cdot)$ is
continuous on a compact $\mathcal{T}_{\epsilon}$, for example.

\begin{manualtheorem}{4$^{\star}$}{} \label{assm:sing_inf}
   There exists a $\bar{\kappa} < \infty$ and, for every $\epsilon > 0$, a $\underline{\kappa}_\epsilon > 0$ such that, for every $t \in \mathcal{T}_\epsilon$,
    \[
       \underline{\kappa}_\epsilon \leq \emin\{I(t)\} \leq \emax\{I(t)\} \leq \bar{\kappa}.
    \]
\end{manualtheorem}

Assumption \ref{assm:sing_Zn} is Assumption \ref{assm:zn_conv} restricted to
$\mathcal{T}_{ns}$. In particular, $Z_n(t) = A(t)^{-1}U_n(t)$ is only defined
for $t \in \mathcal{T}_{ns}$. The spaces $F^p(\mathcal{T}_{ns})$, $p\geq 1$,
are defined as in Section \ref{sec:pd_inf} but with $\mathcal{T}_{ns}$ in place
of $\mathcal{T}$.

\begin{manualtheorem}{5$^\star$}{} \label{assm:sing_Zn}
    There is a tight and Borel-measurable $Z \in F^p(\mathcal{T}_{ns})$ such
    that $Z_n \Lto Z$ in $F^p(\mathcal{T}_{ns})$ as $n\to \infty$, with $Z_n$ defined
    as in \eqref{eq:ln_tilde_expand}.
\end{manualtheorem}

Assumption \ref{assm:monotonicity} ensures $\Delta_n(t) \subseteq \Delta(t)$
for all $n$, which gives $\tilde{X}_n(t) \leq \check{X}_n(t)$ and is key to
the bounding argument in the proof. It is satisfied whenever $\Xi(t)$ is convex
for every $t \in \mathcal{T}$.

\begin{assumption} \label{assm:monotonicity}
    For every $t\in \mathcal{T}$ and $0 < s_1 \leq s_2$,
    \[
        s_2^{-1}\{\Xi(t) - \xi_0\} \subseteq s_1^{-1}\{\Xi(t) - \xi_0\}.
    \]
\end{assumption}

\begin{theorem}\label{thm:main_singular}
    Under Assumptions \ref{assm:Rtilde_bound}, \ref{assm:consistency}, \ref{assm:sing_inf}, \ref{assm:sing_Zn}, \ref{assm:closed}, \ref{assm:cone}, \ref{assm:T_metric}, and \ref{assm:monotonicity},
    \[
        \Lambda_n^{(ns)} = \sup_{t \in \mathcal{T}_{ns}} X_n(t) \Lto   \sup_{t \in \mathcal{T}_{ns}} X(t) = \Lambda^{(ns)},
    \]
    where $X(t) = \sup_{\delta \in \Delta(t)} (2\delta^\tsp Z(t) - \|\delta\|_2^2)$ is defined as in \re{eq:X-process} and $Z$ is the limit process from Assumption~\ref{assm:sing_Zn}.
\end{theorem}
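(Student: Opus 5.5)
The plan is to run a sandwich argument. On each $\mathcal{T}_\epsilon$ we apply the machinery of Section~\ref{sec:pd_inf}, and then we let $\epsilon\downarrow 0$ using the monotonicity in Assumption~\ref{assm:monotonicity}. Fix $\epsilon>0$. On $\mathcal{T}_\epsilon$, Assumption~\ref{assm:sing_inf} gives the uniform eigenvalue bounds of Assumption~\ref{assm:inf_bound} with $\underline{\kappa}_\epsilon$. Restricting Assumption~\ref{assm:sing_Zn} to $\mathcal{T}_\epsilon$ (restriction is a continuous map $F^p(\mathcal{T}_{ns})\to F^p(\mathcal{T}_\epsilon)$) gives Assumption~\ref{assm:zn_conv} there. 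Since the eigenvalues are bounded below on $\mathcal{T}_\epsilon$, Assumption~\ref{assm:Rtilde_bound} yields the equicontinuity needed in Lemma~\ref{lem:Xtilde_Xn}, because $\Vert\xi-\xi_0\Vert_2\le c_3$ implies $\Vert\delta\Vert_2\le \sqrt{n}\,\bar\kappa^{1/2}c_3$. Hence Lemmas~\ref{lem:rate}--\ref{lem:checkX_X}, with $\mathcal{T}$ replaced by $\mathcal{T}_\epsilon$, give $\sup_{t\in\mathcal{T}_\epsilon}X_n(t)\Lto \sup_{t\in\mathcal{T}_\epsilon}X(t)=:\Lambda^{(\epsilon)}$. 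As $\epsilon\downarrow0$, $\Lambda^{(\epsilon)}\uparrow\Lambda^{(ns)}$ pointwise, because $\cup_\epsilon\mathcal{T}_\epsilon=\mathcal{T}_{ns}$ by Assumption~\ref{assm:T_metric}. This gives the lower bound $\liminf_n \pr_*(\Lambda_n^{(ns)}>x)\ge \pr(\Lambda^{(\epsilon)}>x)\to\pr(\Lambda^{(ns)}>x)$.

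For the upper bound, work on all of $\mathcal{T}_{ns}$ without any uniform eigenvalue bound, and show $\sup_{t\in\mathcal{T}_{ns}}\{X_n(t)-\check X_n(t)\}^+=o_{\pr^*}(1)$, where $\check X_n(t)=\sup_{\delta\in\Delta(t)}Q_n(\delta,t)$ as in \eqref{eq:X_tilde_check}. The key inequality is that Assumption~\ref{assm:monotonicity} makes $\Delta_n(t)=\sqrt{n}A(t)^\tsp\{\Xi(t)-\xi_0\}$ increasing in $n$, with closure of the union contained in $\Delta(t)$ by Assumption~\ref{assm:cone}. Hence $\tilde X_n\le\check X_n$. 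It then remains to compare $X_n$ with $\tilde X_n$ uniformly on $\mathcal{T}_{ns}$ using only Assumption~\ref{assm:Rtilde_bound}. Write $X_n(t)=\sup_{\delta\in\Delta_n(t)}\{Q_n(\delta,t)+2\Vert\delta\Vert_2^2\tilde R_n(\delta,t)\}$. On $\tilde B(n,t,c_3)$ with $|\tilde R_n|\le c_1<1/4$, this is at most $\sup_\delta\{2\delta^\tsp Z_n-(1-2c_1)\Vert\delta\Vert_2^2\}=\tilde X_n^{(c_1)}(t)$, which is bounded by $(1-2c_1)^{-1}\check X_n(t)$ (rescale $\delta$ and use that $\Delta(t)$ is a cone). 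For $\delta$ outside $\tilde B$, i.e.\ $\Vert\delta\Vert_2>\sqrt n c_3$, I will use Assumption~\ref{assm:consistency}. The approximate maximizer $\hat\xi_n(t)$ satisfies $\Vert\hat\xi_n-\xi_0\Vert_{\mathcal{T}_{ns}}=o_{\pr^*}(1)$, while $\Vert A(t)^\tsp(\hat\xi_n-\xi_0)\Vert_2\le\bar\kappa^{1/2}\Vert\hat\xi_n-\xi_0\Vert_2$. So with probability tending to one the near-supremum is attained inside $\tilde B(n,t,c_3)$ simultaneously for all $t$. Consequently $\Lambda_n^{(ns)}\le(1-2c_1)^{-1}\sup_{\mathcal{T}_{ns}}\check X_n+o_{\pr^*}(1)$.

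Finally, $\sup_{\mathcal{T}_{ns}}\check X_n=\Vert g(Z_n)\Vert_{\mathcal{T}_{ns}}\Lto\Lambda^{(ns)}$. This follows from the continuity of $g$ shown in Lemma~\ref{lem:checkX_X}, whose proof only uses $0\in\Delta(t)$ and closedness, not eigenvalue bounds, together with Assumption~\ref{assm:sing_Zn}. Letting $c_1\downarrow0$ gives $\limsup_n\pr^*(\Lambda_n^{(ns)}\ge x)\le\pr(\Lambda^{(ns)}\ge x)$ at continuity points. Combined with the lower bound, this yields the claim via the portmanteau characterization for real-valued limits.

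I expect the main obstacle to be the upper bound step near $\mathcal{T}_s$. The quadratic approximation must be controlled where $A(t)^{-1}$ blows up, so the $O_{\pr^*}(n^{-1/2})$ rate of Lemma~\ref{lem:rate} for $\hat\xi_n$ is unavailable uniformly. That is why I avoid bounding $\hat\delta_n$ and instead use the one-sided comparison $\Delta_n(t)\subseteq\Delta(t)$ together with mere consistency to keep $\delta$ inside the region where Assumption~\ref{assm:Rtilde_bound} applies. The possible nonnegativity issues of the remainder for large $\Vert\delta\Vert_2$ should be handled by the factor $(1-2c_1)$ absorbing it.
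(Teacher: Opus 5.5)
Your proof is correct, but it is organized differently from the paper's.

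\textbf{The paper's route.} The paper proves the asymptotic equivalence $\Lambda_n^{(ns)}-\check{\Lambda}_n^{(ns)}=o_{\pr^*}(1)$ and then concludes by Slutsky. It proceeds in three steps.
\begin{enumerate}
\item It shows the two-sided bound $\sup_{t\in\mathcal{T}_{ns}}|X_n(t)-\tilde{X}_n(t)|=o_{\pr^*}(1)$ on all of $\mathcal{T}_{ns}$ (Lemma \ref{lem:Xtilde_Xn_singular}). This needs a uniform $O_{\pr^*}(1)$ rate for the approximate maximizer $\hat\delta_n$ (Lemma \ref{lem:rate_singular}). The input $\sup_t\Vert\hat\delta_n(t)\Vert_2=o_{\pr^*}(\sqrt{n})$ comes from consistency and $\bar\kappa$, exactly as in your argument.
\item It uses $\Delta_n(t)\subseteq\Delta(t)$ to get $\tilde{X}_n\le\check{X}_n$.
\item It closes the gap $\check{\Lambda}_n^{(ns)}-\tilde{\Lambda}_n^{(ns)}$ by splitting at $\mathcal{T}_\epsilon$. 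The continuous mapping theorem is applied to the Lipschitz map $h(f)=\sup_{\mathcal{T}_{ns}}f-\sup_{\mathcal{T}_\epsilon}f$.
\end{enumerate}

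\textbf{Your route.} You split into a distributional upper and lower bound instead.
\begin{itemize}
\item Your upper bound bypasses $\tilde{X}_n$ and the rate lemma. Consistency places the $1/n$-maximizer in $\tilde{B}(n,t,c_3)$ simultaneously for all $t$. The cone identity $\sup_{\delta\in\Delta(t)}\{2\delta^\tsp z-(1-2c_1)\Vert\delta\Vert_2^2\}=(1-2c_1)^{-1}\sup_{\delta\in\Delta(t)}\{2\delta^\tsp z-\Vert\delta\Vert_2^2\}$ then absorbs the remainder. This lets you send $c_1\downarrow 0$ after $n\to\infty$.
\item Incidentally, the uniform rate you say is unavailable fails only for $\hat\xi_n$. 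It does hold for $\hat\delta_n$, because Assumption \ref{assm:Rtilde_bound} and $Q_n$ are already standardized; you simply do not need it.
\item Your lower bound uses only the trivial inequality $\Lambda_n^{(ns)}\ge\sup_{\mathcal{T}_\epsilon}X_n$ and the nonsingular lemmas on $\mathcal{T}_\epsilon$. On the limit side, monotone convergence $\Lambda^{(\epsilon)}\uparrow\Lambda^{(ns)}$ replaces the paper's $h$-argument.
\end{itemize}

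\textbf{What each route buys.} The paper's route gives a clean $o_{\pr^*}(1)$ equivalence, so weak convergence in the outer-probability sense follows from Slutsky with no further work. Your route ends with a matching $\liminf$ of inner and $\limsup$ of outer tail probabilities. Since $\Lambda_n^{(ns)}$ need not be measurable, your final ``portmanteau for real-valued limits'' step deserves a sentence. For instance: the measurable minorant and majorant of $\Lambda_n^{(ns)}$ then both converge in law to $\Lambda^{(ns)}$. Being ordered, they differ by $o_{\pr}(1)$, which gives $\E^*f(\Lambda_n^{(ns)})\to\E f(\Lambda^{(ns)})$ for bounded Lipschitz $f$. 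Alternatively, your upper bound already yields $(\Lambda_n^{(ns)}-\check{\Lambda}_n^{(ns)})^+=o_{\pr^*}(1)$ for each fixed $c_1$, and hence as $c_1\downarrow 0$.

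\textbf{A minor point.} Transferring Assumption \ref{assm:Rtilde_bound} to Assumption \ref{assm:equicont} on $\mathcal{T}_\epsilon$ uses the upper bound $\bar\kappa$, not the lower bound. The lower bound $\underline{\kappa}_\epsilon$ is what Lemmas \ref{lem:rate} and \ref{lem:set_dist} need there.
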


\begin{proof}
    Let $\tilde{\Lambda}_n^{(ns)} = \sup_{t \in \mathcal{T}_{ns}} \tilde{X}_n(t)$ and $\check{\Lambda}_n^{(ns)} = \sup_{t \in \mathcal{T}_{ns}} \check{X}_n(t)$. For any $\epsilon > 0$, let $\tilde{\Lambda}_n^{\epsilon} = \sup_{t \in \mathcal{T}_\epsilon} \tilde{X}_n(t)$ and $\check{\Lambda}_n^{\epsilon} = \sup_{t \in \mathcal{T}_\epsilon} \check{X}_n(t)$. Observe
    \[
        \Lambda_n^{(ns)} = (\Lambda_n^{(ns)} - \tilde{\Lambda}_n^{(ns)}) + (\tilde{\Lambda}_n^{(ns)} - \check{\Lambda}_n^{(ns)}) + \check{\Lambda}_n^{(ns)},
    \]
    with the subtracted quantities finite since $\tilde{\Lambda}_n^{(ns)}, \check{\Lambda}_n^{(ns)} \leq \|Z_n\|_{\mathcal{T}_{ns}}^2 = O_{\pr^*}(1)$ by Assumption \ref{assm:sing_Zn}. Lemma \ref{lem:Xtilde_Xn_singular} implies $\Lambda_n^{(ns)} - \tilde{\Lambda}_n^{(ns)} = o_{\pr^*}(1)$. Lemma \ref{lem:checkX_X_singular} implies $\check{\Lambda}_n^{(ns)} \Lto \Lambda^{(ns)}$, so it suffices to show $\check{\Lambda}_n^{(ns)} - \tilde{\Lambda}_n^{(ns)} = o_{\pr^*}(1)$.

    To that end, let $C_s(t) = s^{-1}\{\Xi(t) - \xi_0\}$, so that $\Delta_n(t) = A(t)^\tsp C_{n^{-1/2}}(t)$ and $\Delta(t) = A(t)^\tsp C(t)$. By Assumption \ref{assm:monotonicity}, if $\delta \in C_{n^{-1/2}}(t)$ then $\delta \in C_{m^{-1/2}}(t)$ for all $m\geq n$. Since $d\{C_{m^{-1/2}}(t), C(t)\} \to 0$ as $m\to\infty$ by Assumption \ref{assm:cone} and $C(t)$ is closed, this implies $C_{n^{-1/2}}(t) \subseteq C(t)$ and hence $\Delta_n(t) \subseteq \Delta(t)$.
    Therefore $\tilde{X}_n(t) \leq \check{X}_n(t)$ for all $t \in \mathcal{T}$. Consequently, for every $\epsilon > 0$,
    \[
        0 \leq \tilde{\Lambda}_n^{\epsilon} \leq \tilde{\Lambda}_n^{(ns)} \leq \check{\Lambda}_n^{(ns)} \quad \text{and} \quad 0 \leq \tilde{\Lambda}_n^{\epsilon} \leq \check{\Lambda}_n^{\epsilon} \leq \check{\Lambda}_n^{(ns)}.
    \]
    This gives the bound $0 \leq \check{\Lambda}_n^{(ns)} - \tilde{\Lambda}_n^{(ns)} \leq \check{\Lambda}_n^{(ns)} - \tilde{\Lambda}_n^{\epsilon} = (\check{\Lambda}_n^{(ns)} - \check{\Lambda}_n^{\epsilon}) + (\check{\Lambda}_n^{\epsilon} - \tilde{\Lambda}_n^{\epsilon})$. Thus, for any $\eta>0$,
    \begin{align*}
        \pr^*\left(|\check{\Lambda}_n^{(ns)} - \tilde{\Lambda}_n^{(ns)}| > \eta\right) &\leq \pr^*\left(\check{\Lambda}_n^{(ns)} - \check{\Lambda}_n^{\epsilon} \geq \eta/2\right) + \pr^*\left(\check{\Lambda}_n^{\epsilon} - \tilde{\Lambda}_n^{\epsilon} \geq \eta/2\right).
    \end{align*}
    For any fixed $\epsilon > 0$, the second term on the right-hand side tends to zero as $n\to\infty$ by Lemma \ref{lem:tildeX_checkX_singular}. Thus, we are done if we can show that, for any fixed $\alpha > 0$, there is an $\epsilon > 0$ such that the upper limit of the first term on the right-hand side is less than $\alpha$.

    To that end, note Lemma \ref{lem:checkX_X_singular} gives $\check{X}_n \Lto X$ in $F(\mathcal{T}_{ns})$. Since $\mathcal{T}_\epsilon \subseteq \mathcal{T}_{ns}$, the map $h: F(\mathcal{T}_{ns}) \to \R$ defined by
    \[
        h(f) = \sup_{t\in\mathcal{T}_{ns}} f(t) - \sup_{t\in\mathcal{T}_\epsilon} f(t)
    \]
    is continuous (in fact, Lipschitz). Therefore, by the continuous mapping theorem,
    \[
        \check{\Lambda}_n^{(ns)} - \check{\Lambda}_n^{\epsilon} = h(\check{X}_n) \Lto h(X) = \Lambda^{(ns)} - \Lambda^{\epsilon}.
    \]
    Moreover, by Assumption \ref{assm:T_metric}, for every $t\in\mathcal{T}_{ns}$ we have $d(t,\mathcal{T}_s) > 0$, so $t\in\mathcal{T}_\epsilon$ for all small enough $\epsilon$. Hence $\cup_{\epsilon>0}\mathcal{T}_\epsilon=\mathcal{T}_{ns}$ and $\Lambda^{\epsilon} \uparrow \Lambda^{(ns)}$ almost surely as $\epsilon \downarrow 0$. Thus, $\Lambda^{(ns)} - \Lambda^{\epsilon} \downarrow 0$ almost surely as $\epsilon\downarrow 0$, which implies $\pr(\Lambda^{(ns)} - \Lambda^{\epsilon} \geq \eta/2) \to 0$ as $\epsilon \downarrow 0$. We can therefore pick an $\epsilon$ such that $\pr(\Lambda^{(ns)} - \Lambda^{\epsilon} \geq \eta/2)\leq \alpha$. Then, by the portmanteau theorem \citep[Theorem 1.3.4]{vandervaart2023weak},
    \[
        \limsup_{n\to\infty} \pr^*\bigl(\check{\Lambda}_n^{(ns)} - \check{\Lambda}_n^{\epsilon} \geq \eta/2\bigr) \leq \pr\bigl(\Lambda^{(ns)} - \Lambda^{\epsilon} \geq \eta/2\bigr) \leq \alpha,
    \]
which finishes the proof.
\end{proof}

Theorem \ref{thm:main_singular} can be used to obtain the asymptotic
distribution of $\Lambda_n$ in different ways. For example, if $\mathcal{T}_s$
is finite and $X_n$ is c\`adl\`ag, that is, right-continuous with left limits,
then $\Lambda_n = \sup_{t\in\mathcal{T}} X_n(t) = \sup_{t\in\mathcal{T}_{ns}}
X_n(t)$ for every $n$. This follows because $\mathcal{T}_{ns}$ is dense and
right-continuity forces the values at points in $\mathcal{T}_s$ to agree with
right limits in $\mathcal{T}_{ns}$. Hence, the limit distribution of $\Lambda_n$
is the same as that of $\Lambda_n^{(ns)}$.

The following corollary gives a general result in this direction. We say
$t\mapsto\Xi(t)$ is inner semicontinuous if for every $t\in\mathcal{T}$ and
sequence $(t_m)$ tending to $t$, there exist $\xi_m \in \Xi(t_m)$ such that
$\xi_m \to \xi$ for some $\xi \in \Xi(t)$. When $\mathcal{T}_{ns}$ is dense and
$t\mapsto \Xi(t)$ is inner semicontinuous on $\mathcal{T}_{ns}$, it can always
be extended to an inner semicontinuous map on $\mathcal{T}$ by defining
\begin{align} \label{eq:extend}
\Xi(t) = \{\xi \in \R^p : \xi = \lim_{m\to\infty} \xi_m,
\text{ for some } t_m \to t, \xi_m \in \Xi(t_m)\},~~t\in\mathcal{T}_s.
\end{align}

\begin{corollary}\label{cor:main_singular_fullT}
    Under the assumptions of Theorem \ref{thm:main_singular}, suppose
    additionally that (i) for every $n$, $\ell_n:\Theta\to\R$ is lower
    semicontinuous, (ii) $t\mapsto \Xi(t)$ and $t\mapsto \Delta(t)$ are both
    inner semicontinuous on $\mathcal{T}_{ns}$, and (iii) $Z$ can be extended to an almost surely
    continuous process on $\mathcal{T}$. Then $\Lambda_n \Lto \Lambda =
    \sup_{t\in\mathcal{T}} X(t)$, where $X$ is defined as in \eqref{eq:X-process}
    with $\Xi$ and $\Delta$ extended to $\mathcal{T}$ using \eqref{eq:extend}.
\end{corollary}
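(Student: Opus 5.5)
The plan is to show two things. First, $\Lambda_n = \Lambda_n^{(ns)}$ for every $n$ (deterministically, or at least on events of outer probability tending to one). Second, $\Lambda = \Lambda^{(ns)}$ almost surely. Theorem \ref{thm:main_singular} then gives $\Lambda_n = \Lambda_n^{(ns)} \Lto \Lambda^{(ns)} = \Lambda$, since convergence in law is unaffected by replacing a variable with one that agrees almost surely (respectively, outside an asymptotically negligible outer-probability event).

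\textbf{Finite-sample step.} Clearly $\Lambda_n^{(ns)} \le \Lambda_n$, so we need $X_n(t_*) \le \sup_{t\in\mathcal{T}_{ns}} X_n(t)$ for each $t_*\in\mathcal{T}_s$. Here we also use that $\ell_n(\xi_0,t)$ does not depend on $t$, as in the LRT setting; this is implicit, since $\Lambda_n$ is the LRT statistic. Fix $\xi\in\Xi(t_*)$ and $\eta>0$. Since $\Xi(t_*)$ is given by \eqref{eq:extend} (or, more generally, is the inner limit), there exist $t_m\in\mathcal{T}_{ns}$ with $t_m\to t_*$ (density, Assumption \ref{assm:T_metric}) and $\xi_m\in\Xi(t_m)$ with $\xi_m\to\xi$. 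Lower semicontinuity (i) gives $\liminf_m \ell_n(\xi_m,t_m)\ge \ell_n(\xi,t_*)$. Hence $\sup_{t\in\mathcal{T}_{ns}} X_n(t) \ge 2\{\ell_n(\xi,t_*)-\ell_n(\xi_0)\}-\eta$. Taking the supremum over $\xi$ and letting $\eta\downarrow0$ gives the claim.

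The one subtlety is that the definition of the extension should guarantee that every point of $\Xi(t_*)$ is reached by sequences from $\mathcal{T}_{ns}$. If \eqref{eq:extend} allows $t_m\in\mathcal{T}_s$, I would use a diagonal argument combined with density of $\mathcal{T}_{ns}$ and inner semicontinuity to replace them with points in $\mathcal{T}_{ns}$.

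\textbf{Limit step.} Similarly, $\Lambda^{(ns)}\le\Lambda$, and we must show $X(t_*)\le \sup_{\mathcal{T}_{ns}}X$ on the almost-sure event where $Z$ is continuous on $\mathcal{T}$ (iii). Fix $\delta\in\Delta(t_*)$. Using the extension of $\Delta$ via \eqref{eq:extend}, choose $t_m\in\mathcal{T}_{ns}$ and $\delta_m\in\Delta(t_m)$ with $t_m\to t_*$ and $\delta_m\to\delta$. Continuity of $Z$ gives
\[
Q(\delta_m,t_m)=2\delta_m^\tsp Z(t_m)-\Vert\delta_m\Vert_2^2\to Q(\delta,t_*),
\]
so $X(t_*)\le\sup_{t\in\mathcal{T}_{ns}}X(t)$. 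Here $X(t_*)$ is defined with the extended $\Delta(t_*)$ and the continuous extension of $Z$. This step is easy because $Q$ is jointly continuous in $(\delta,z)$.

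\textbf{Main obstacle.} The delicate points are bookkeeping rather than estimates. The first is making sure the extension in \eqref{eq:extend} is achieved by approximating sequences lying in $\mathcal{T}_{ns}$; this is handled by density and a diagonal argument, using that the inner limit of a map at $t_*$ computed along a dense set equals the one computed along all of $\mathcal{T}$ once the values on $\mathcal{T}_s$ are defined as limits. The second is measurability: $\Lambda$ equals $\Lambda^{(ns)}$ almost surely, and the latter is measurable as a continuous functional of $X$ from Theorem \ref{thm:main_singular}. The equalities then transfer convergence in law directly, with no Slutsky argument needed beyond identity of the random variables.
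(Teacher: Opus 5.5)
Your proposal is correct and follows the paper's overall skeleton: establish $\Lambda_n=\Lambda_n^{(ns)}$ and $\Lambda=\Lambda^{(ns)}$, then invoke Theorem~\ref{thm:main_singular}. You differ in how the two supremum identities are obtained.

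The paper first proves that the sample paths $t\mapsto X_n(t)$ and $t\mapsto X(t)$ are lower semicontinuous on all of $\mathcal{T}$. It does this by fixing a near-maximizer $\xi\in\Xi(t)$ and approximating it along an \emph{arbitrary} sequence $t_m\to t$ via inner semicontinuity. It then uses the general fact that a lower semicontinuous function has the same supremum over a dense subset. You instead argue pointwise at each $t_*\in\mathcal{T}_s$, giving each $\xi\in\Xi(t_*)$ (respectively $\delta\in\Delta(t_*)$) its own approximating sequence in $\mathcal{T}_{ns}$.

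The paper's route yields a reusable structural fact, lower semicontinuity everywhere. But at singular points it needs every element of $\Xi(t_*)$ to be reachable along every sequence $t_m\to t_*$, and the outer-limit construction \eqref{eq:extend} need not provide this. For example, take $p=1$, $\mathcal{T}=[-1,1]$, $\mathcal{T}_s=\{0\}$, $\Xi(t)=\{0\}$ for $t<0$ and $\Xi(t)=[0,1]$ for $t>0$. Then \eqref{eq:extend} gives $\Xi(0)=[0,1]$, yet points of $(0,1]$ cannot be approached along $t_m\uparrow 0$, so $X_n$ need not be lower semicontinuous at $0$. Your route needs only one approximating sequence per point, which is exactly what \eqref{eq:extend} supplies. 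It is therefore more economical and more robust: the supremum identity still follows in that example, by approaching from the right.

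You also rightly make explicit that $t\mapsto\ell_n(\xi_0,t)$ is constant, which the paper's lower-semicontinuity step uses without stating it.
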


\section{Independent observations}\label{sec:ind}

Suppose $Y_1,\ldots,Y_n$ are i.i.d.\ with density $f(y;\theta)$, with log-likelihood $\ell_n(\theta;\omega) = \sum_{i=1}^n \log f(Y_i(\omega);\theta)$ and score $U_n(t) = n^{-1/2}\sum_{i=1}^n \nabla_\xi \log f(Y_i;\xi_0,t)$, so that $Z_n(t) = A(t)^{-1}U_n(t)$. Define the per-observation cross-information
\beq
I(t,\tpr) = \E_{\xi_0}\{\nabla_\xi \log f(Y_1;\xi_0,t)\nabla_\xi \log f(Y_1;\xi_0,\tpr)^\tsp\},
\lb{Ittpr}
\eeq
so that $I(t) = I(t,t)$. Sufficient conditions for Assumption~\ref{assm:equicont} are in Proposition~\ref{prop:assm_2_sufficient}.

\begin{proposition} \label{prop:assm_2_sufficient}
   Suppose (i) $Y_1, Y_2, \dots$ are i.i.d.; (ii) the average observed information
   $\hat{I}_n(t) = -n^{-1}\nabla_\xi^2\ell_n(\xi_0,t)$ satisfies $\|\hat{I}_n -
   I\|_{\mathcal{T}} = o_{p^*}(1)$ with $I(t) = \E_{\xi_0}\{\hat{I}_n(t)\}$ satisfying Assumption~\ref{assm:inf_bound};
   and (iii) all third-order partial derivatives of $\log f(y;\theta)$
   with respect to $\xi$ are bounded in a neighborhood of $\xi_0$ by an
   integrable $M(y)$, uniformly in $\theta$. Then, with $U_n(t) = n^{-1/2}\sum_{i=1}^n \nabla_\xi \log f(Y_i;\xi_0,t)$, Assumption~\ref{assm:equicont}
   holds with a remainder term $R_n(\xi,t)$ defined in \re{eq:ln_expand}.
\end{proposition}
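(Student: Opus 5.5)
The plan is a third-order Taylor expansion of $\ell_n(\cdot,t)$ around $\xi_0$, followed by a reading-off of $R_n$ from the resulting identity. Fix $t\in\mathcal{T}$ and $\xi\in\bar B_{c_3}(\xi_0;t)$, with $c_3$ small enough that the ball lies inside the neighbourhood where (iii) applies. By Taylor's theorem in integral (or Lagrange) form,
\begin{align*}
\ell_n(\xi,t)-\ell_n(\xi_0,t) = \sqrt{n}(\xi-\xi_0)^\tsp U_n(t) - \frac{n}{2}(\xi-\xi_0)^\tsp \hat I_n(t)(\xi-\xi_0) + T_n(\xi,t),
\end{align*}
where $T_n(\xi,t)$ is the cubic remainder. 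The first-order term has exactly this form because $\nabla_\xi \ell_n(\xi_0,t)=\sqrt n\,U_n(t)$. If $\Xi(t)$ is not convex, the segment from $\xi_0$ to $\xi$ may leave $\Xi(t)$. In that case I will apply the expansion on the segment within the neighbourhood from (iii), where $\log f$ is assumed to be defined and smooth.

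Comparing the display with \eqref{eq:ln_expand} gives the definition
\begin{align*}
R_n(\xi,t)=\frac{-\tfrac{n}{2}(\xi-\xi_0)^\tsp\{\hat I_n(t)-I(t)\}(\xi-\xi_0)+T_n(\xi,t)}{n\Vert I(t)^{1/2}(\xi-\xi_0)\Vert_2^2}
\end{align*}
for $\xi\neq\xi_0$, and $R_n(\xi_0,t)=0$. Assumption~\ref{assm:inf_bound} gives $n\Vert I(t)^{1/2}(\xi-\xi_0)\Vert_2^2\ge n\underline\kappa\Vert\xi-\xi_0\Vert_2^2$. The first part of $|R_n|$ is therefore at most $\Vert\hat I_n-I\Vert_{\mathcal{T}}/(2\underline\kappa)$, which is $o_{p^*}(1)$ uniformly in $t$ and $\xi$ by (ii).

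For the cubic term, (iii) gives $|T_n(\xi,t)|\le C_p\Vert\xi-\xi_0\Vert_2^3\sum_{i=1}^n M(Y_i)$, where $C_p$ is a combinatorial constant depending only on $p$ (of order $p^3/6$). Dividing by $n\underline\kappa\Vert\xi-\xi_0\Vert_2^2$, the second part of $|R_n|$ is at most $C_p\underline\kappa^{-1}c_3\, n^{-1}\sum_i M(Y_i)$. By the weak law of large numbers this is $C_p\underline\kappa^{-1}c_3\{\E M(Y_1)+o_p(1)\}$. The bound is uniform in $t$ because $M$ does not depend on $\theta$.

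Given $c_1$, I choose $c_3$ with $C_p\underline\kappa^{-1}c_3\{\E M(Y_1)+1\}<c_1/2$. Then the outer probability that $\sup_t\sup_{\xi}|R_n|>c_1$ is at most $\pr^*(\Vert\hat I_n-I\Vert_{\mathcal{T}}>c_1\underline\kappa)+\pr\{n^{-1}\sum_i M(Y_i)>\E M(Y_1)+1\}$, which tends to zero. Hence the $\limsup$ in \eqref{eq:equicont} is $0<c_2$.

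I expect the main obstacle to be technical rather than conceptual: stating the cubic Taylor bound uniformly when the segment must stay inside the domain of the third derivatives. That requires the neighbourhood in (iii) to be of uniform radius in $t$, which I take to be the meaning of ``uniformly in $\theta$''. I also need outer probabilities because $\hat I_n$ may not be measurable as a function of $t$, but the bound above handles this directly, since the random quantity $n^{-1}\sum_i M(Y_i)$ is measurable.
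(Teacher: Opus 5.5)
Your proposal is correct and takes essentially the same route as the paper. Both arguments use a Taylor expansion of $\ell_n(\cdot,t)$ about $\xi_0$ with a third-order Lagrange remainder, split $n\Vert I(t)^{1/2}(\xi-\xi_0)\Vert_2^2R_n$ into $\frac{n}{2}(\xi-\xi_0)^\tsp\{I(t)-\hat I_n(t)\}(\xi-\xi_0)$ plus a cubic term bounded by a constant times $\Vert\xi-\xi_0\Vert_2^3\sum_i M(Y_i)$, divide by $n\underline{\kappa}\Vert\xi-\xi_0\Vert_2^2$, and conclude with condition (ii) and the law of large numbers. The differences are cosmetic: you use the weak law where the paper uses the strong law, and your constant $p^3/6$ is cruder than the paper's $p^{3/2}/6$. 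Your explicit remark that the segment from $\xi_0$ to $\xi$ must stay where (iii) applies makes a point the paper leaves implicit.
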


The covariance of $Z_n$ is
\beq
\rho(t,\tpr) = A(t)^{-1}I(t,\tpr)A(\tpr)^{-\tsp},
\lb{rho}
\eeq
with marginal covariance $\rho(t,t) = \Id_p$. Verification of Assumption~\ref{assm:zn_conv} reduces to a central limit theorem for $Z_n$ together with asymptotic tightness in $F^p(\mathcal{T})$, the latter typically established by showing the score functions form a Donsker class \citep[Chapter~2]{vandervaart2023weak}.

Under $H_0$, the limit process $Z$ is a centered Gaussian process with covariance $\rho$. Under a sequence
\beq
\theta_n = (\xi_0 + n^{-1/2}v, t_0)
\lb{thn}
\eeq
of contiguous alternatives with $v \ne 0$, $Z$ is non-centered with the same covariance $\rho$ and mean function $\mu(t;t_0,v) := \rho(t,t_0)A(t_0)^\tsp v$, by Taylor expansion of $f(y;\xi_0+n^{-1/2}v,t_0)$ and $A(t)^{-1}I(t,t_0)=\rho(t,t_0)A(t_0)^\tsp$. This extends a result of \citet{da1977} from $p=1$ to general $p\ge 1$.

\section{Examples}\label{sec:examples}

In this section we illustrate key aspects of our theory with examples. For most of these examples we have a sequence of $n$ i.i.d.\ random variables $Y_i\in\cY$, with log-likelihood $\ell_n(\theta)$. Moreover, the index set $\cT \subset \R^q$ of $Z_n$ and $Z$ will be a subset of $q$-dimensional Euclidean space for some positive integer $q$. We further assume in this section that the set $C(t)$, introduced in \re{CtDef}, is a closed, convex cone of the following form:  For each $t\in\cT$ there is a positive integer $0\le r \le p$ (independent of $t$) such that each $C(t)$ is determined by the intersection of $r$ half planes, that possibly depend on $t$. In more detail, for each $t\in\cT$ we associate an $r\times p$ matrix $V(t)$ with linearly independent row vectors $v_1(t),\ldots,v_r(t)$, such that
\beq
C(t) = C_{V(t)} =: \{\xi\in\R^p; \, v_i(t)\xi \ge 0 \mbox{ for }i=1,\ldots,r\}.
\lb{Ct2}
\eeq
From this it follows that $\Delta(t) = A(t)^\tsp C(t) = \{\delta\in\R^p;\, u_i(t)\delta \ge 0 \mbox{ for } i=1,\ldots,r\}$, where $u_i(t) = v_i(t)A(t)^{-\tsp}$. Note that $v_i(t)$ can be renormalized by a positive constant without changing $C(t)$ or $\Delta(t)$.

Since $\Delta(t)$ is also a closed convex cone, we deduce from \re{XtProj} that $X(t)=\|P_{\Delta(t)} Z(t)\|_2^2$ is the squared norm of the projection of $Z(t)$ onto $\De(t)$. In order to find the marginal distribution of $X(t)$, assume first that $\xi=\xi_0$. It then follows from Section \ref{sec:ind} that $Z$ is a centered Gaussian process with marginal covariance  matrix $\Id_p$, and consequently the process $X$ is a $\bachi^2$-process with a marginal distribution of $X(t)$ given by \re{chibar}, where $w_i(t)$ is the probability that the projection $P_{\De(t)} Z(t)$ belongs
to a face of $\De(t)$ with dimension $i$, see \cite{sesi2002} and
references therein. From this we conclude that $X(t)$ is a mixture of $r$ $\chi^2$-distributions, corresponding to the $r$ nonzero weights $w_{p-r+1}(t),\ldots,w_p(t)$. In particular, when $r=0$ we have $C(t)=\De(t)=\R^p$, so that $w_p(t)=1$ in \re{chibar}, and
$X$ is a pure $\chi^2_p$-process. When $r=1$, $\De(t)$ is a half-plane
and $X(t)$ is a $0.5:0.5$-mixture of $\chi^2$-distributions with 
$p-1$ and $p$ degrees of freedom, i.e.\ $w_{p-1}(t)=w_p(t)=0.5$. When
$r=2$, $X(t)$ is a mixture of three types of $\chi^2$-distributions, with
weights $w_{p-2}(t)=0.5-w_p(t)$, $w_{p-1}(t)=0.5$ and
\beq
w_p(t) = \cos^{-1}(-u_1(t)u_2(t)^\tsp/\| u_1(t)\|_2 \|u_2(t)\|_2)/2\pi,
\lb{wDef}
\eeq
where $u_1(t)$ and $u_2(t)$ are the two row vectors of $U(t)$. 

On the other hand, if $\theta_n = (\xi_0+n^{-1/2}v,t_0)$, then $Z(t)\sim N(\mu(t),\Id_p)$, with $\mu(t)=\mu(t;t_0,v)$. From this it follows that $X$ is a noncentral $\bachi^2$-process with marginal distribution $X(t)=\|P_{\Delta(t)} [\mu(t)+\va(t)]\|_2^2$ and $\va(t)\sim N(0,\Id_p)$. This distribution is a mixture of truncated, noncentral $\chi^2$-distributions, whose weights $w_{p-r+1}(t),\ldots,w_p(t)$ not only depend on $\De(t)$ but also on $\mu(t)$. 

Since $\ell_n$ is the i.i.d.\ log-likelihood, the random variable $\Lambda_n$ in \re{eq:Lambda_composite} is the log-likelihood ratio test statistic for testing the null hypothesis $H_0$ in \re{H0H1} against $H_1$. Let $0\le s \le p$ be the dimensionality of the null parameter set $\Xi_0$ in \re{H0H1}. In particular, a simple null hypothesis $\Xi_0=\{\xi_0\}$ corresponds to $s=0$.   

In the examples of Sections \ref{sec:mix_ex}--\ref{sec:polar_ex} we will write $\De(t)=\De$, $A(t)=A$, and $C(t)=C$ whenever these quantities are independent of $t\in\cT$. For each example we will also give the values of $q$, $p$, $r$ and $s$.

\subsection{Mixture models}\label{sec:mix_ex}

In this section we give a number of mixture models for which our results can be applied. Rather than a
complete verification of all assumptions in each example, which would take substantial space, we at several points refer to existing results for that  particular setting. Asymptotic distribution results for the LRT statistic of other mixture models can be found in \cite{bich1993, chla1995, li1995, lepo1997, daga1999} and references therein. 

\begin{example}[Mixture distributions with $q=p=r=1$, $s=0$]\label{ex:gauss_mix_1}
Assume, independently for $i \in \{1, \ldots, n\}$,
\begin{equation} \label{eq:xi_def}
    Y_i \sim (1-\xi)N(0,1) + \xi N(t,1)
\end{equation}
with $\cY=\R$, $\cT=[-T,T]$ for some $T>0$ and $\Xi=[0,1]$.
The mixing weight $\xi$ is the parameter of interest, $t$ (the component mean) is
unidentifiable under the simple null hypothesis $H_0:\xi=0$, so that $s=0$ and $\xi_0=0$ is on the boundary of $\Xi$.
It follows that $p=1$, $C=\De=[0,\infty)$,
the per-observation score is $\nabla_\xi \log f(Y_i;\xi_0,t) = \exp(tY_i-t^2/2) - 1$,
the cross-information in \re{Ittpr} simplifies to $I(t,\tpr) = \exp(t\tpr)-1$ (so that $I(t) = I(t,t)=\exp(t^2)-1$), $\cT_s=\{0\}$,
\[
Z_n(t) = \frac{1}{\sqrt{n\{\exp(t^2)-1\}}}\sum_{i=1}^n \left\{\exp(tY_i - t^2/2)-1 \right\}
\]
if $t\ne 0$ and by taking the limit of $Z_n(t)$ as $t\to 0$, $Z_n(0)=\sum_{i=1}^n Y_i/\sqrt{n}$.
The covariance function of $Z_n$ and the limit process $Z$ is
\[
\rho(t,\tpr) = \{\exp(t\tpr)-1\}/[\{\exp(t^2)-1\}\{\exp((\tpr)^2)-1\}]^{1/2}.
\]
The mean function of $Z$ is $\mu(t)=0$ under $H_0$, and it is $\mu(t)=\rho(t,t_0)\sqrt{I(t_0)}v$ under a sequence \re{thn} of contiguous alternatives. Since $\De=[0,\infty)$, $X(t) = \max(Z(t),0)^2$.
That $\Lambda_n\Lto \Lambda$ as $n\to\infty$ has been established under $H_0$ by \citet[Theorem 2]{chen2001large}.
\hfill\slut 
\end{example}

\begin{example}[Mixture distributions with $q=2,p=r=1$, $s=0$]\label{ex:gauss_mix_2comp}
We generalize Example \ref{ex:gauss_mix_1} so that the mean and variance of the
second component in \eqref{eq:xi_def} are unknown, i.e.,
\[
Y_i \sim (1-\xi)N(0,1) + \xi N(t_1,t_2),
\]
with $\cY=\R$, $t=(t_1,t_2)$ and $\cT=[-T_1,T_1]\times [T_{21},T_{22}]$ for some $T_1>0$ and
$0 < T_{21} < T_{22} < 2$. In this case $q=2$, $p=1$, $C=\Delta=[0,\infty)$,
and the per-observation score at the simple null hypothesis $\xi_0 = 0$, corresponding to $s=0$, is
\[
\nabla_\xi \log f(Y_i;\xi_0,t) = \frac{1}{\sqrt{t_2}}\exp\left\{\frac{1}{2}(1-1 /t_2)Y_i^2
+ \frac{t_1}{t_2}Y_i - \frac{t_1^2}{2t_2}\right\}.
\]
Since $I(t)$ vanishes at $t = (0,1)$, we have $\cT_s=\{(0,1)\}$, and the theory
of Section~\ref{sec:singular} is needed. The restriction $t_2<2$ ensures $I(t)$
is finite. As in Example~\ref{ex:gauss_mix_1}, $C=\De = [0,\infty)$ gives
\[
\Lambda = \sup_{t\in \cT} \,\,\max(0,Z(t))^2,
\]
where the covariance function $\rho(t,\tpr)$ of $Z$ is deduced from \re{Ittpr} and \re{rho} on $\cT_{ns}\times \cT_{ns}$ and then extended to all of $\cT\times \cT$. The mean function of $Z$ is $\mu(t)=0$ under $H_0$, and it equals $\rho(t,t_0)\sqrt{I(t_0)}v$ under a sequence \re{thn} of contiguous alternatives. 
\hfill\slut
\end{example}

\begin{example}[Mixture distributions with $q=r=1,p=2$, $s=0$ or 1]\label{ex:gauss_mix_2} A second generalization of Example
\ref{ex:gauss_mix_1} is
\[
Y_i \sim (1-\xi_2)N(\xi_1,1) + \xi_2N(t,1),
\]
with $\cY=\R$, $\cT=[-T,T]$, $\xi=(\xi_1,\xi_2)$ and $\Xi=[-T,T]\times [0,0.5]$.
The restriction $\xi_2\le 0.5$ is imposed for identifiability, since
$\theta=(t,\xi_1,\xi_2)$ and $\theta'=(\xi_1,t,1-\xi_2)$ give the same model.
Let us first consider a null parameter that corresponds to a simple null hypothesis $\xi_0=(0,0)$ and $s=0$. The per-observation score at $\xi_0$ is
\[
\nabla_\xi \log f(Y_i;\xi_0,t) = [Y_i,\exp(t Y_i-t^2/2)-1]^\tsp,
\]
the cross information matrix is
\begin{equation}
I(t,\tpr) = \left( \begin{array}{cc} 1 & \tpr \\ t& e^{t\tpr}-1 \end{array} \right)
\lb{IMix2}
\end{equation}
and $\cT_s=\{0\}$.
We use a Cholesky decomposition of $I(t)=I(t,t)$, that is, choose a lower triangular $A(t)$ satisfying
$A(t)A(t)^\tsp = I(t)$, defined through its inverse
\begin{equation}
A(t)^{-1} = \left( \begin{array}{cc} 1 & 0 \\ -t/\sigma_t  & 1/\sigma_t \end{array} \right),
\lb{AMix2}
\end{equation}
with $\sigma_t^2 = e^{t^2}-1-t^2$. From the definition $Z_n(t) =
A(t)^{-1}U_n(t)$ it follows that $Z_n(t)=[Z_{n1}(t),Z_{n2}(t)]^\tsp$, where
$Z_{n1}(t) =\sum_{i=1}^n Y_i/\sqrt{n}$, $Z_{n2}(t) = \sum_{i=1}^n
(\exp(tY_i-t^2/2)-1-tY_i)/(\sqrt{n}\sigma_t)$ for $t\ne 0$, and by taking the limit of $Z_{n2}(t)$ as $t\to 0$, $Z_{n2}(0) = \sum_{i=1}^n (Y_i^2-1)/\sqrt{2n}$. The covariance function of $Z_n$ and the limit process $Z$ is found by
inserting \re{IMix2} and \re{AMix2} into \re{rho}, giving
\[
    \rho(t,\tpr) = \left( \begin{array}{cc} 1 & 0 \\ 0 & \sigma_{t\tpr}/(\sigma_t\sigma_{\tpr})
    \end{array} \right),
\]
where $\sigma_{t\tpr} = e^{t\tpr}-1-t\tpr$. Hence, $Z(t)=[Z_1,Z_2(t)]^\tsp$,
where $Z_1\sim \rN(\mu_1,1)$ and $Z_2$ is a real-valued Gaussian process, independent
of $Z_1$, with mean function $\mu_2(t)$ and covariance function $\sigma_{t\tpr}/(\sigma_t\sigma_{\tpr})$. Under $H_0$ we have $\mu_1=\mu_2(t)=0$, whereas 
$$
\mu(t) = \left( \begin{array}{c} \mu_1 \\ \mu_2(t) \end{array}\right) = 
 \left( \begin{array}{cc} 1 & 0 \\ 0 & \sigma_{tt_0}/(\sigma_t\sigma_{t_0})
    \end{array} \right) 
\left( \begin{array}{cc} 1 & t_0 \\ 0 & \sigma_{t_0} \end{array} \right) 
\left( \begin{array}{c} v_1 \\ v_2 \end{array}\right)
=  \left( \begin{array}{c} v_1 + t_0 v_2\\ (\sigma_{tt_0}/\sigma_{t})v_2 \end{array}\right)
$$
under a sequence  \re{thn} of contiguous alternatives.

Since $\xi_2 \geq 0$, the constraint on $\xi$ implies
$C = \{(c_1, c_2)^\tsp :\, c_2 \ge 0\}$,
the upper half plane. Applying $A(t)^\tsp$ using \re{AMix2}, the corresponding set of $\delta$ is
\[
\De = A(t)^\tsp C = \{(\delta_1,\delta_2)^\tsp :\, \delta_2 \ge 0\}
\]
for all $t\in\cT$. It follows that $X(t) = \Vert P_\De Z(t)\Vert_2^2 = Z_1^2 + \max(Z_2(t),0)^2$, and hence
\[
\Lambda = Z_1^2 + \sup_{-T\le t \le T} \max(Z_2(t),0)^2.
\]
That $\Lambda_n\Lto\Lambda$ has been proved under $H_0$ by
\citet[Theorem 3]{chen2001large}.

To illustrate Theorem~\ref{thm:composite}, we consider another (composite) null $H_0:\xi_2=0$, $\xi_1$ free, which tests whether the two-component mixture reduces to a single Gaussian with unknown mean. This corresponds to a composite null hypothesis, more specifically a null parameter space $\Xi_0=[-T,T]\times\{0\}$ and $s=1$. Under $H_0$, the model is
$Y_i\sim N(\xi_1,1)$ regardless of $t$, so $t$ is unidentifiable under $H_0$.
Optimizing over $\xi_1$ gives $X_n^0(t)=2\sup_{\xi_1\in[-T,T]}\{\ell_n(\xi_1,0,t)-\ell_n(0,0,t)\} = n\bar{Y}_n^2$,
which does not depend on $t$ and converges in law to $Z_1^2$. Thus
$X^0(t)=Z_1^2$ is a constant process, and $\|X_n^0\|_{\cT}\Lto Z_1^2$. By
Theorem~\ref{thm:composite} and the limit from the simple null above,
\beq
\Lambda_n \Lto \La = \bigl[Z_1^2 + \sup_{-T\le t\le T}\max(Z_2(t),0)^2\bigr] - Z_1^2
= \sup_{-T\le t \le T} \max(Z_2(t),0)^2,
\lb{LambdaMixture2}
\eeq
the same limit as in Example~\ref{ex:gauss_mix_1}. The $Z_1^2$ contribution
from the identifiable mean parameter $\xi_1$ cancels exactly. Note also that the limiting distribution of $\La$ in \re{LambdaMixture2} holds both under the null hypothesis, as well as under a sequence \re{thn} of contiguous alternatives. For asymptotics under $H_0$, this is the known-variance special case of \citet{chen2003tests}, who derive the same limit for the composite null in a normal mixture with unknown variance
$\sigma^2$; the additional $\chi^2_1$ contribution from estimating $\sigma^2$
cancels by the same mechanism.
\hfill\slut
\end{example}

\subsection{Linkage analysis and MOD scores}\label{sec:linkage}

It has been noticed that some test statistics arising in genetic linkage
analysis can be formulated as the supremum of a one-dimensional Gaussian or
$\chi^2_1$-process along an interval, see, for instance, \citet{lander1989mapping} and
\citet{feingold1993gaussian}. \citet{dupuis1995statistical} define another test
statistic that asymptotically equals the supremum of a $\chi^2_2$-process along
an interval. In this section, we show that LR-tests in linkage analysis, usually
referred to as MOD scores (maximized over disease models), fit into our framework.

In order to present these asymptotic LR-test results, let us start by briefly summarizing the genetic linkage model; see  \citet{sham1998statistics} for more details. The objective is to test whether the position (or locus) $\tau$ of a disease susceptibility gene is along a given chromosome $[0,T]$ of genetic map length $T$:
\begin{equation}
\begin{array}{ll}
H_0: \tau\notin [0,T],\\
H_1: \tau\in [0,T].
\end{array}
\lb{H01Def3}
\end{equation}
At our disposal we have $n$ families, with information on how the disease (phenotypes) as well as DNA is segregated in each family.  

\subsubsection{One family type} \lb{Sec:OneFam} 

In this section we assume that all $n$ families are of the same type. By this we mean that not only the number $N$ of individuals is the same for each family, but also the pedigree structure
$\cP$ and the phenotype vector $\Phi = (\Phi_k,\, k\in\cP)$. Transmission of genetic material at position $s\in [0,T]$ along the chromosome is described by a binary inheritance vector
$v(s) = (v_1(s),\ldots,v_m(s)) \in \{0,1\}^m$, where $m$ is the total number
of meioses and $v_l(s)\in\{0,1\}$ indicates whether grandpaternal or grandmaternal DNA was
transmitted during the $l$th meiosis, with switches between these two states corresponding to points of crossovers. 
There are two meioses (one from each parent) for each nonfounder, so that $m/2$ and $N-m/2$ is the number of nonfounders and founders of the pedigree, respectively. The complete marker data $Y = \{v(s);\, 0\le s \le T\}\in \cY$ is observed for each family. Here $\cY$ is the space of functions $y:[0,T]\to \{0,1\}^m$ with at most a finite number of jumps, corresponding to the total number of crossovers for all meioses of the pedigree.

We will restrict ourselves to binary phenotypes, so that an individual is either unaffected or affected by a disease. Consider a biallelic disease gene and assume that the allele that increases the risk of being affected has frequency $\rmp \in [0,1]$ in the population. Define penetrance parameters $\ga = (\ga_0,\ga_1,\ga_2)\in [0,1]^3 = \Ga$, where $\ga_j$ is the probability for an individual to be affected given $j$ copies of the disease allele (that is, $j$ corresponds to the number of parents that pass on the disease allele to this individual). With $\theta = (\tau,\rmp,\ga)$, the density of $Y$ given $\Phi$ is
\begin{equation}
f(y;\theta) = \pr_{\scr{p},\ga}(v(\tau)|\Phi)\,\pr(y|v(\tau)).
\lb{fDefLink}
\end{equation}
The exact form of the first term $\pr_{\scr{p},\ga}(v(\tau)|\Phi)$ on the right-hand side of \re{fDefLink} is in Section \ref{sec:link_details} of the Supplementary material. Assuming Mendelian inheritance, the null hypothesis in \re{H01Def3} corresponds to a uniform distribution 
\beq
\pr_{\scr{p},\ga}(v(\tau)|\Phi) = 2^{-m}
\lb{PUnif}
\eeq
of $v(\tau)$ over $\{0,1\}^m$. If additionally Haldane's map function for crossovers is assumed, all $\{v_l(s); \, 0\le s \le T\}$ are independent Markov processes on $\{0,1\}$ for $l=1,\ldots,m$, with intensity 1 for each process to switch between its two states (when genetic map length along the chromosome is measured in Morgans). This implies that $\pr(y|v(\tau)) = \exp(-mT)$, independently of the total number of crossovers for all $m$ meioses, and independently of where along the chromosomes these crossovers are located. Since $\pr(y|v(\tau))$ does not depend on $\tau$ it is a constant that can be dropped from the per-pedigree likelihood $f(y;\theta)$. 

To bring the model into the framework of the present article, we reparametrize the penetrance
parameters. For fixed $\rmp$, we introduce an inner product and an associated
orthonormal basis $e_0=(1,1,1), e_1, e_2$ for $\R^3$ depending on $\rmp$ (see Section \ref{sec:link_details} of the Supplementary material for the definition of the scalar product of $\R^3$, $e_1$ and $e_2$). Write
\begin{equation}
\ga = Ke_0 + \sqrt{\xi_1}\,e_1 + \sqrt{\xi_2}\,e_2,
\lb{zetaExp}
\end{equation}
where $K = \E(\Phi_k)$ is the disease prevalence, and $\xi_1 = \va_1^2$,
$\xi_2 = \va_2^2$ are the additive and dominance components of genetic
variance. We set
\begin{equation}
t = (\tau,\rmp,K), \quad \xi = (\xi_1,\xi_2), 
\lb{tDef}
\end{equation}
with $\cT = [0,T]\times[0,1]^2$ and $\Xi(t) = \{\xi \in [0,\infty)^2 : \ga(\xi;K,\rmp) \in \Ga\}$, corresponding to $q=3$ and $p=2$ respectively. Since the null hypothesis in \re{H01Def3} is mathematically equivalent to \re{PUnif}, under certain conditions it can also be formulated in terms of the two genetic variance components $\xi=(\xi_1,\xi_2)$. In order to demonstrate this, we introduce for each $\rmp$ the set $\Ga_0(\rmp)\subset\Ga$ of penetrance vectors $\ga$ for which \re{PUnif} holds. We will assume that the pedigree type satisfies 
\beq
\Ga_0(\rmp) =  \{\ga:\ga_0=\ga_1=\ga_2\} \mbox{ for all }\rmp.
\lb{Garmp}
\eeq
In general $\Ga_0(\rmp)$ is at least as large as the right-hand side of \re{Garmp}. For instance, if the phenotype of at most one individual of the pedigree is known, then $\Ga_0(\rmp)=\Ga$ for all $\rmp$ and \re{Garmp} fails. Condition \re{Garmp} will however hold for most pedigree types of interest, and in view of \re{zetaExp} it implies that the null hypothesis condition \re{PUnif} is equivalent to a simple null hypothesis $\xi_0 = (0,0)$ for the genetic variance vector $\xi$, corresponding to a simple null hypothesis and $s=0$. This is to say that testing whether the disease gene of interest is located along the chromosome $[0,T]$ or not is mathematically equivalent to testing whether $\xi$ equals $\xi_0$ or not. 

The per-observation score $\nabla_\xi \log f(y;\xi_0,t) = S(v(\tau);K)$ was derived in \citet{hossjer2005conditional}, based on methods developed in \citet{hossjer2003determining}. Since $p=2$, the per-observation score vector $S = (S_1,S_2)^\tsp$ has two components given by
\begin{equation}
\begin{aligned}
S_1(v) = \displaystyle\sum_{1\le k<l\le N}\om_{kl}\,\IBD_{kl}(v)/2 - C_1;~~
S_2(v)  = \displaystyle\sum_{1\le k<l\le N}\om_{kl}\,\mathbf{1}_{\{\scr{IBD}_{kl}(v)=2\}} - C_2,
\end{aligned}
\lb{S1S2}
\end{equation}
where $\IBD_{kl}(v)\in\{0,1,2\}$ is the number of alleles from the founders of the family that are shared identical-by-descent by $k$ and $l$ from the same founder alleles, $C_i$ is a centering constant assuring that $\E_{\xi_0}\{S_i(v)\}=0$, whereas $\om_{kl}$ is a weight assigned to each pair $(k,l)$ of individuals, based their phenotypes (see  Section~\ref{sec:link_details} of the Supplementary material). For binary phenotypes we assign $\Phi_k=0$, 1 or ?, depending on whether $k$ is affected, unaffected or has unknown phenotype. The weights of pair $(k,l)$ then simplify to
\begin{equation}
\om_{kl} = \begin{cases}
K^{-2}, & \Phi_k=\Phi_l=1,\\
-K^{-1}(1-K)^{-1}, & \Phi_k\ne\Phi_l,\;\Phi_k,\Phi_l\in\{0,1\},\\
(1-K)^{-2}, & \Phi_k=\Phi_l=0,\\
0, & \text{otherwise.}
\end{cases}
\lb{omDefBin}
\end{equation}
Since we restrict ourselves to binary phenotypes, we assume that \re{omDefBin} holds. 
It follows from \re{Ittpr} that the cross information matrix is 
\begin{equation}
\begin{array}{rcl}
I(t,\tpr) &=& \E_{\xi_0}\bigl[S(v(\tau);K) S(v(\tau^\prime);K^\prime)^\tsp\bigr]\\
&=&  \sum_{v,\vp} S(v;K) S(\vp;K^\prime)^\tsp \, \pr_{\xi_0}(v(\tau)=v,\,v(\tau^\prime)=\vp)
\end{array}
\lb{IDefLink}
\end{equation}
between $t$ and $\tpr = (\tau^\prime,\rmp^\prime,K^\prime)$. The last sum ranges over all $4^m$ pairs of inheritance vectors $v,\vp$, under the null hypothesis that all components of $Y=\{v_i(s); \, 0\le s \le T\}_{i=1}^m$ are independent Markov processes on $\{0,1\}$ with intensity 1 of jumping between their two states. It follows from \re{S1S2} and \re{omDefBin} that the per-observation score only depends on $t$ through $\tau$ and $K$. Hence we deduce from \re{IDefLink} that the cross information matrix \re{IDefLink} is only a function of $(\tau,K)$ and $(\tau^\prime,K^\prime)$.  Provided the pedigree type satisfies \re{Garmp}, and if $\eps \le K \le 1-\eps$ for some $\eps > 0$, it follows that the per individual Fisher information matrix $I(t)=I(t,t)$ is nonsingular for all $t \in \cT$, i.e. $\cT_{ns}=\cT$. 

Although $\log f(y;\theta)$ is not twice differentiable in $\xi$ at $\xi_0$
owing to the reparametrization \eqref{zetaExp}, the required quadratic expansion
of the log-likelihood still holds as the number $n$ of families grows:

\begin{theorem}[Quadratic log-likelihood expansion for linkage.]\label{thm:link_quad}
Assume marker data $Y_i$ of families $i=1,\ldots,n$ are independent. The log-likelihood, with $f(Y_i;\theta)$ as in \eqref{fDefLink}, then admits a quadratic expansion of the form \eqref{eq:ln_expand},
with $U_n(t)$ as in Section~\ref{sec:ind}, $I(t) = I(t,t)$ as in
\eqref{IDefLink}, $A(t)$ satisfying $A(t)A(t)^\tsp = I(t)$, and a remainder
$R_n$ satisfying Assumption~\ref{assm:equicont}.
\end{theorem}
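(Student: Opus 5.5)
The plan is to exploit the fact that, although $\gamma$ depends on $\xi$ through $\sqrt{\xi_1}$ and $\sqrt{\xi_2}$, the per-family density is a smooth function of $\varepsilon=(\varepsilon_1,\varepsilon_2)$ whose first-order and mixed second-order terms vanish. This leaves an expansion that is linear in $\xi$ up to an $O(\Vert\xi\Vert_2^{3/2})$ error.

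\emph{Step 1 (expansion in $\varepsilon$).} For a fixed pedigree type, $\pr_{\scr{p},\ga}(v(\tau)=v|\Phi)$ is a ratio of polynomials in $\ga$ and $\rmp$. The numerator is a sum over genotype configurations of products of penetrances, weighted by Mendelian transmission probabilities, and the denominator is the corresponding marginal probability of $\Phi$. I will substitute \eqref{zetaExp} with $\ga=Ke_0+\varepsilon_1e_1+\varepsilon_2e_2$ and Taylor expand around $\varepsilon=0$, where \eqref{PUnif} holds.

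The inner product defining $e_1,e_2$ in the Supplementary material is the genotype-frequency inner product. With it, terms linear in $\varepsilon$ average to zero over genotypes, and so do the cross terms $\varepsilon_1\varepsilon_2$. The surviving quadratic terms are $\varepsilon_1^2 S_1(v;K)+\varepsilon_2^2S_2(v;K)$, which is precisely how the score \eqref{S1S2} was obtained in \citet{hossjer2005conditional}. This should give
\[
2^m\pr_{\scr{p},\ga}(v|\Phi)=1+S(v;K)^\tsp\xi+r(\varepsilon;v,\rmp,K),\qquad |r|\le C\Vert\varepsilon\Vert_2^3\le C'\Vert\xi\Vert_2^{3/2}.
\]
The constant $C$ is uniform over the finitely many $v\in\{0,1\}^m$ and over $t\in\cT$ with $\eps\le K\le 1-\eps$, since the coefficients are continuous on a compact set with denominators bounded away from zero.

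I expect this step to be the main obstacle. The work is to check carefully that all first-order terms and the $\varepsilon_1\varepsilon_2$ term vanish identically in $v$, and not merely in expectation, and that the third-order bound is uniform in $(\rmp,K)$.

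\emph{Step 2 (log-likelihood).} Writing $x_i=S(v_i(\tau);K)^\tsp\xi+r_i$, I will use $\log(1+x)=x-x^2/2+O(|x|^3)$ for $|x|$ small. This is legitimate because $S$ is bounded (finitely many $v$, and $K$ bounded away from $0$ and $1$). Summing over families gives
\[
\ell_n(\xi,t)-\ell_n(\xi_0,t)=\sqrt n\,\xi^\tsp U_n(t)-\tfrac n2\xi^\tsp\hat I_n(t)\xi+n\,O(\Vert\xi\Vert_2^{3/2}),
\]
where $\hat I_n(t)=n^{-1}\sum_i S(v_i(\tau);K)S(v_i(\tau);K)^\tsp$. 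The $O$-term is uniform in $t$ and in the sample, and the $\sum_i r_i$ contribution has been absorbed into it.

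\emph{Step 3 (uniform LLN).} I will show $\sup_{t\in\cT}\Vert\hat I_n(t)-I(t)\Vert=o_{p^*}(1)$. The entries of $\hat I_n(t)$ are finite linear combinations of $n^{-1}\sum_i\mathbf 1\{v_i(\tau)=v\}$ with coefficients depending smoothly on $K$. Each path $\tau\mapsto v_i(\tau)$ is piecewise constant, and under $H_0$ it has a Poisson number of jumps. It therefore suffices to show that $\{y\mapsto\mathbf 1\{y(\tau)=v\}:\tau\in[0,T]\}$ is Glivenko--Cantelli. I will do this by bracketing: on a grid of mesh $h$, the indicator can change within a cell only if some meiosis has a crossover there, an event of probability at most $mh$.

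\emph{Step 4 (conclusion).} Set
\[
R_n(\xi,t)=\frac{-\tfrac12\xi^\tsp\{\hat I_n(t)-I(t)\}\xi+O(\Vert\xi\Vert_2^{3/2})}{\Vert I(t)^{1/2}\xi\Vert_2^2}.
\]
Since $I(t)$ is uniformly positive definite on $\cT$, by continuity in $(\tau,K)$ on a compact set together with nonsingularity, we get
\[
|R_n|\le \underline\kappa^{-1}\bigl\{\Vert\hat I_n-I\Vert_{\cT}+C\Vert\xi-\xi_0\Vert_2^{1/2}\bigr\}.
\]
Choosing $c_3$ small then verifies \eqref{eq:equicont}, which is Assumption~\ref{assm:equicont}.
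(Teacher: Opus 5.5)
There is a genuine gap in the order of the remainder. Your Step~1 stops at second order in $\varepsilon$ and bounds everything else by $C\Vert\varepsilon\Vert_2^3\le C'\Vert\xi-\xi_0\Vert_2^{3/2}$. So in Step~4 the numerator of $R_n$ contains a term of size $\Vert\xi-\xi_0\Vert_2^{3/2}$, while the denominator $\Vert I(t)^{1/2}(\xi-\xi_0)\Vert_2^2$ is of size $\Vert\xi-\xi_0\Vert_2^{2}$. The ratio is $O(\Vert\xi-\xi_0\Vert_2^{-1/2})$, which is unbounded on $\bar{B}_{c_3}(\xi_0;t)$. It is not $O(\Vert\xi-\xi_0\Vert_2^{1/2})$ as you wrote, so \eqref{eq:equicont} does not follow. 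This is not a mere slip. Because $\xi_j=\varepsilon_j^2$, the quadratic term in $\xi$ sits at fourth order in $\varepsilon$. Third-order terms in $\varepsilon$ therefore dominate it near $\xi_0$, and fourth-order terms are of the same size, so neither can be lumped into a crude remainder.

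The paper, following \citet{rotnitzky2000likelihoodbased}, expands $\ell_n(t,\varepsilon)$ through order four in $\varepsilon$. It uses that only even-order derivatives are nonzero at $\varepsilon=0$. It reads off $-\frac n2\xi^\tsp I(t)\xi$ from the means of the fourth-order derivatives $\psi_{40},\psi_{22},\psi_{04}$, whose fluctuations $r_k$ have mean zero. It then bounds the fifth-order Taylor remainder by $\sum_i M(Y_i)\Vert\varepsilon\Vert_2^5$; dividing by $n\Vert\xi-\xi_0\Vert_2^2$ gives exactly the $c_3^{1/2}$ bound you were aiming for.

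In your formulation you would need three additional facts.
\begin{enumerate}
\item[(a)] The cubic coefficients of $2^m\pr_{\mathrm{p},\gamma}(v\mid\Phi)$ in $\varepsilon$ must vanish identically in $v$. Zero mean is not enough: a mean-zero cubic term contributes $O_{p^*}(\sqrt n)\Vert\varepsilon\Vert_2^3$, whose ratio to $n\Vert\xi-\xi_0\Vert_2^2$ is of order $n^{-1/2}\Vert\xi-\xi_0\Vert_2^{-1/2}$. That is again unbounded as $\xi\to\xi_0$.
\item[(b)] The quartic coefficients must have mean zero under $H_0$. This is automatic from $\E_{\xi_0}\{2^m\pr_{\mathrm{p},\gamma}(v(\tau)\mid\Phi)\}=1$, so the Glivenko--Cantelli argument of your Step~3 makes them $n\Vert\xi-\xi_0\Vert_2^2\,o_{p^*}(1)$ uniformly in $t$.
\item[(c)] The remainder after fourth order must be $O(\Vert\varepsilon\Vert_2^5)$ uniformly in $t$.
\end{enumerate}

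So the obstacle you flagged is misplaced. The vanishing of the first-order and mixed second-order terms is the easy part. The identity that carries the argument is the vanishing of the third-order terms, which the paper takes from \citet{hossjer2005conditional}. For pedigrees with three or more phenotyped members, this involves third-order genotype moments such as $\E\{e_1(|G_k|)e_1(|G_l|)e_1(|G_j|)\mid v\}$. Your Step~3 is nonetheless a useful addition, since the paper does not spell out the uniform law of large numbers needed for its fluctuation terms $r_k$.
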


In order to find the asymptotic distribution $\La$ of the likelihood ratio test statistic under the simple null hypothesis $H_0$ we use Theorem~\ref{thm:main_nonsing}. Since $\xi_1\ge 0$ and $\xi_2\ge 0$, there are $r=2$ boundary restrictions with $C(t)=C=[0,\infty)^2$ a convex, closed cone, and therefore $\De(t)=A(t)^\tsp C$ is a closed, convex cone as well. Consequently, $X(t) = \|P_{\De(t)}Z(t)\|_2^2$, with $Z$ a centered Gaussian process with covariance function $\rho(t,\tpr)$ given by \re{rho}. Since $I(t,\tpr)$ only depends on $t$ and $\tpr$ through $(\tau,K)$ and $(\tau^\prime,K^\prime)$, the same is true for $\rho(t,\tpr)$, and therefore the limit process $X(t)$ will not depend on $\rmp$. Hence we regard $X(\tau,K)$ as a function of $(\tau,K)$ only, and the asymptotic LRT statistic reduces to $\La = \sup_{\tau \in [0,T],\, K \in [\eps,1-\eps]} X(\tau,K)$. There are $r=2$ constraints that define $\Delta(t)$. From this and \re{chibar} it follows that 
\begin{equation}
X(\tau,K) \sim \bigl(0.5 - w_2(\tau,K)\bigr)\chi^2_0 + 0.5\,\chi^2_1
+ w_2(\tau,K)\chi^2_2,
\lb{LinkWeights}
\end{equation}
has a $\bachi^2$-distribution under $H_0$ for each $(\tau,K)$, with
\beq
w_2(\tau,K) = \cos^{-1}(I_{12}(\tau,K)/\sqrt{I_{11}(\tau,K)I_{22}(\tau,K)})/(2\pi).
\lb{w2DefLink}
\eeq
The weight \re{w2DefLink} of the $\chi_2^2$ component in \re{LinkWeights} can be deduced from \re{wDef}. Indeed, since $C=[0,\infty)^2$, we may take $V(t)=\Id_2$ in \re{Ct2}, giving $u_i(t)=[A(t)^{-\tsp}]_i$ (the $i$-th row of $A(t)^{-\tsp}$). Then $u_i(t)u_j(t)^{\tsp}=[I(t)^{-1}]_{ij}=I^{-1}_{ij}(\tau,K)$, and consequently
$$
\frac{u_1(t)u_2^\tsp(t)}{\|u_1(t)\|_2 \|u_2(t)\|_2} = \frac{I_{12}^{-1}(\tau,K)}{\sqrt{I_{11}^{-1}(\tau,K)I_{22}^{-1}(\tau,K)}}
= -  \frac{I_{12}(\tau,K)}{\sqrt{I_{11}(\tau,K)I_{22}(\tau,K)}}.
$$

\begin{example}[MOD scores with only affecteds.]\label{ex:aff_only}
When all pedigree members with known phenotype are affected, \re{S1S2}-\re{omDefBin} simplify to 
\begin{equation}
S(v) = K^{-2}\bigl(\Spairs(v) - \E_0\{\Spairs\},\;
\Sgprs(v) - \E_0\{\Sgprs\}\bigr)^\tsp,
\lb{Saff}
\end{equation}
where $\Spairs = \sum_{k<l}\IBD_{kl}/2$ \citep{whittemore1994class}, and
$\Sgprs = \sum_{k<l}\mathbf{1}_{\{\scr{IBD}_{kl}=2\}}$ \citep{mcpeek1999optimal},
with sums over all affected pairs $(k,l)$. In this case the limit process $Z(t) = Z(\tau)$ under $H_0$ is a
function of $\tau$ only. It is a two-dimensional stationary Gaussian process on $[0,T]$ with 
\beq
\rho(\tau,\tau^\prime) = \rho(\tau^\prime-\tau).
\lb{rhostat}
\eeq
It is shown in Section \ref{sec:link_details} of the Supplementary material that the  covariance function has the form 
\beq
\rho(s) = \sum_{l=1}^m \ka_l \exp(-2l|s|), 
\lb{rhos}
\eeq
corresponding to a stationary diffusion; where $\ka_l$ are $2\times 2$ matrices given in Table~\ref{Tab2} for the pedigree types of Figure~\ref{Pedigrees}. For affected sibling pedigrees (types 1--4), the two components $Z_1$ and $Z_2$ of $Z$ are independent Ornstein-Uhlenbeck processes along $[-T,T]$. The asymptotic LR-statistic then simplifies to
\begin{equation}
\La = \sup_{0 \le \tau \le T} X(\tau),
\lb{laLink2}
\end{equation}
with $X(\tau) = \|P_\De Z(\tau)\|_2^2$ a stationary $\bachi^2$-process that is a mixture \re{LinkWeights} between $\chi^2$-distributions with 0, 1 and 2 degrees of freedom.  Since $K^{-2}$ enters as a multiplicative constant in \eqref{Saff}, it follows that $I(t)=I(t,t)=K^{-4}I_0$, for some $2\times 2$ matrix $I_0$ independent of $t$. Hence we deduce from \re{w2DefLink} that the weight $w_2$ of the $\chi_2^2$-distribution is a constant, independent of $t$. Figure~\ref{Pedigrees} shows several pedigree types with only affecteds; the
corresponding $I(t)$, $\De = C_U$, and $w_2$ are in Table~\ref{Tab1} of Section \ref{sec:link_details} of the Supplementary material.
\hfill\slut
\end{example}

\begin{example}[Unilineal pairs]\lb{Exa:UP}
An affected unilineal pair $(k,l)$ is a pedigree type in which both $k$ and $l$
are affected, all other members have unknown phenotype, and $(k,l)$ can share at
most one allele IBD (for instance, affected first cousins or uncle-nephew pairs, but not
affected sib pairs). For such pedigrees, the above parametrization $\theta=(\xi,t)$ with two genetic variance components in $\xi$ does not work, since \re{Garmp} is not satisfied. For this reason $I(t)$ is singular for {\sl all} values of $t$, and therefore the framework of Section~\ref{sec:singular} does not apply. Instead another parametrization $\theta=(t,\xi)$ is needed with only one genetic variance component in $\xi$. This means that $\cT \subset \R^4$, $\Xi(t) \subset \R$, $p = 1$, and $\De(t) = [0,\infty)$. It can be shown that $X(t)$ is only a function of $t$ through $\tau\in [-T,T]$. Therefore, the asymptotic LRT statistic satisfies \re{laLink2}, the supremum of a $\bachi^2$-process with marginal distribution $X(\tau) \sim 0.5\,\chi^2_0 + 0.5\,\chi^2_1$. 
Formally, this corresponds to a weight $w_2 = 0$ for the $\chi^2_2$-component of $X(\tau)$. 
\hfill\slut
\end{example}

\begin{example}[Affected sib pairs and MLS scores]
For affected sib pairs (pedigree type 1 in Figure~\ref{Pedigrees}), there is an
alternative parametrization $\theta = (\tau,\xi)$ with $\xi = (z_0,z_1)$, where
$z_i$ is the probability that the sib pair shares $i$ alleles IBD at $\tau$
\citep{suarez1978generalized}. The resulting test, the MLS score, is
asymptotically equivalent to the corresponding MOD score; a detailed
verification is in Section~\ref{sec:link_details} of the Supplementary material.
\hfill\slut
\end{example}

\subsubsection{Several family types}

Let us generalize the model of Section \ref{Sec:OneFam} and assume that the $n=n_1+\ldots+n_J$ families are of $J > 1$ types $(\cP_1,\Phi_1),\ldots,(\cP_J,\Phi_J)$ with $n_j$ families of type $j$, with a density $f_{j}(y;\theta)$ of marker data for pedigree type $(\cP_j,\Phi_j)$. The combined log-likelihood $\ell_n(\theta;\omega) = \sum_{j=1}^J\sum_{i=1}^{n_j} \log f_{j}(Y_{ji}(\omega);\theta)$ for the whole data set with all $J$ family types generalizes the single-type case, with $Y_{ji}\in\cY_j$ marker data for family $i$ of type $j$. This log-likelihood admits the same type of asymptotic expansion as in Theorem \ref{thm:link_quad}, with asymptotic per-observation cross information matrix $I(t,\tpr) = \sum_{j=1}^J \beta_j I_j(t,\tpr)$, where $\beta_j = \lim_{n\to\infty} n_j/n$ are the asymptotic family type proportions and $I_j(t,\tpr)$ is defined as in \eqref{IDefLink} for pedigree type $(\cP_j,\Phi_j)$.

\begin{example}[MOD score with affected sib pairs and first cousins.]
Put $J=2$ with asymptotic proportions $\beta_1=\beta$ of affected sib pairs and
$\beta_2=1-\beta$ of affected first cousins. Then
\[
I(t) = K^{-4}\left(\begin{array}{cc} 0.125\beta + 0.0469(1-\beta) & 0.125\beta\\
0.125\beta & 0.1875\beta \end{array}\right).
\]
The $\chi^2_2$-weight $w_2$ from \eqref{w2DefLink} is plotted as a function
of $\beta$ in Figure~\ref{Fig2} of Section \ref{sec:link_details} of the Supplementary material. It is notable that $w_2 \to 0.25$ as
$\beta \to 0^+$, whereas $\beta = 0$ corresponds to the degenerate case of only having unilineal first cousin pairs (Example \ref{Exa:UP}) with $w_2 = 0$. This discontinuity suggests that very large sample sizes are needed
when $\beta$ is positive but small for $\Lambda_n$ to be distributed approximately as \eqref{laLink2}.
\hfill\slut
\end{example}

\subsection{Polar-coordinates example}\label{sec:polar_ex}

In this section we illustrate that the weak limit of the LRT statistic is the supremum of a Gaussian process also when all parameters are identifiable under the null hypothesis for the (Cartesian) parameter $\eta$. This is achieved by switching to polar coordinates. For simplicity we confine ourselves to a two-dimensional parameter, although the results hold more generally. That is, we assume that the common density of the i.i.d.\ random variables $Y_1, \ldots, Y_n$ is $f(y;\eta)$ for some unknown parameter $\eta = (\eta_1,\eta_2)^\tsp \in \R^2$. We wish to test a simple null hypothesis $H_0:\eta=\eta_0$ against $H_1:\eta\ne\eta_0$. We will first assume that $\eta_0=(\eta_{01},\eta_{02})^\tsp$ is an inner point of the parameter space. Consider a sequence of parameter vectors
\beq
\eta_n = \eta_0 + n^{-1/2} v [\cos(t_0),\sin(t_0)]^T =: \eta_0+ n^{-1/2}v J(t_0),
\lb{etanCart}
\eeq
for some fixed $v\ge 0$ and $t_0\in [0,2\pi]$. Note that $\eta_n=\eta_0$ if $v=0$, whereas $\eta_n$ corresponds to a contiguous sequence of alternatives when $v>0$. We will use bars to denote quantities for this model, which corresponds to $q=s=r=0$ and $p=2$. Thus $\bar{\ell}_n(\eta)$ is the i.i.d.\ log-likelihood and $\bar{U}_n=n^{-1/2}\nabla_\eta \bar{\ell}_n(\eta)$ is the score for $\eta$ at $\eta_0$.  Let $\baI=(\baI_{ij})_{i,j=1}^2$ be the $2\times 2$ Fisher information matrix for $\eta$ at $\eta=\eta_0$, and $\baZ_n = \baA^{-1}\bar{U}_n$ the standardized score, with $\baA= \baI^{1/2}$ the symmetric square root of $\baI$. Under the conditions of Corollary \ref{cor:no_nuisance}, $\baZ_n\Lto \baZ = (\baZ_1,\baZ_2)^\tsp \sim N(\bamu,\Id_2)$, where $\bamu = v\baA J(t_0)$, and consequently
\beq
\Lambda_n\Lto \La = \|\baZ \|_2^2 \sim \chi_{2,\|\bamu\|_2^2}^2
\lb{LambdaCart}
\eeq
as $n\to\infty$, so that $\La$ has a noncentral chisquare distribution when $v>0$, with noncentrality parameter $\|\bamu \|_2^2 = v^2 J(t_0)^\tsp \baI J(t_0)$.

In polar coordinates we reformulate the model as
    \begin{equation}
        \eta = \eta_0 + \xi[\cos(t),\sin(t)]^\tsp = \eta_0 + \xi J(t)
    \end{equation}
for $t\in\cT = [0,2\pi]$, $\xi\in \Xi=[0,\infty)$ and $\xi_0= 0$. After this reparametrization the null hypothesis is still simple ($s=0$), but now there is a nuisance parameter $t$ of dimension $q = 1$, the parameter $\xi$ of interest is of dimension $p = 1$, and on the boundary of the parameter space ($r=1$).  In polar coordinates, the sequence of parameter values in \re{etanCart} corresponds to $t=t_0$ and $\xi_n = \xi_0 + n^{-1/2}v$.

In order to verify that the distribution of $\La$ in \re{LambdaCart} is still the same for polar coordinates, we remove the bars for quantities that are evaluated under the polar coordinates parametrization. The Fisher information for $\xi$ at $\xi_0$ is given by $I(t)=J(t)^\tsp \baI J(t)$, so that $\cT_s=\emptyset$ if $\baI$ is nonsingular. Since $p=1$ and $I(t)$ is scalar, $A(t)=I(t)^{1/2}$ is scalar as well. The standardized score $Z_n(t) = A(t)^{-1}U_n(t)$ for polar coordinates can be written as
$$
 Z_n(t) = I(t)^{-1/2}U_n(t) = I(t)^{-1/2}J(t)^\tsp\bar{U}_n = I(t)^{-1/2}J(t)^\tsp\baA \baZ_n =: K(t)^\tsp \baZ_n.
$$
It is easy to see that $K(t)^\tsp K(t) = \|K(t)\|_2^2=1$, and for this reason we can write $K(t)=I(t)^{-1/2}\baA J(t)=J(g(t))$, were  $g: [0,2\pi]\to [0,2\pi]$ is a transformed angle. Hence the weak limit of $Z_n$ is a process $Z \in F([0, 2\pi])$ satisfying
$$
Z(t) = K(t)^\tsp \baZ = J(g(t))^\tsp \baZ,
$$
a time-transformed sinusoidal Gaussian process with covariance function $\rho(t,\tpr) = \cos [g(t)-g(\tpr)]$ and mean function
$$
\mu(t) = K(t)^\tsp \bamu = v I(t_0)^{1/2} K(t)^\tsp K(t_0) = v I(t_0)^{1/2} \cos [g(t)-g(t_0)],
$$
in agreement with $\mu(t;t_0,v)=\rho(t,t_0)A(t_0)^\tsp v$. Note in particular that $Z$ is a non-centered (centered) Gaussian process when $v>0$ ($v = 0$) respectively. Since $C=\De=[0,\infty)$, we get
$$
X(t) = \max(Z(t),0)^2 = \max[\baZ_1\cos(g(t)) + \baZ_2\sin(g(t)),0]^2.
$$
Thus, using that $g$ is a bijection on $[0,2\pi]$,
\beq
 \Lambda = \sup_{0\le t \le 2\pi} X(t) = \baZ_1^2 + \baZ_2^2 \sim \chi^2_{2,\|\bamu \|_2^2},
\lb{LambdaPolar}
\eeq
in accordance with the Cartesian-coordinates result. The last equality follows by picking $g(t)$ so that the Cauchy--Schwarz inequality holds with equality ($\vert K(t)^\tsp \baZ\vert \leq \Vert K(t)\Vert_2 \Vert \baZ\Vert_2$, with equality if $K(t) = \baZ / \Vert \baZ\Vert_2$, which is achievable since $g$ is a bijection).

Suppose instead that the original, untransformed parameter space is $\{\eta; \, \eta_i \ge \eta_{i0} \mbox{ for }i=1,2\}$. The null parameter $\eta_0$ is then at the boundary of this parameter space, giving rise to a testing problem with $p=r=2$ and $q=s=0$. The neighbourhood of $\eta_0$ is a closed convex cone $\baC=[0,\infty)^2$ and the standardized convex cone is $\bar{\Delta}=\baA\baC$. The transformed, polar parameter space, however, is almost the same as before, except that the parameter set of the nuisance parameter changes to $\cT=[0,\pi/2]$. It follows, analogously to \re{LambdaCart} and \re{LambdaPolar}, that
\beq
 \Lambda = \| P_{\bar{\Delta}} \baZ \|_2^2 = \sup_{0\le t \le \pi/2} X(t).
\lb{LambdaPolar2}
\eeq
The distribution of $\Lambda$ when $v>0$ is a mixture of truncated, noncentral $\chi^2$-distributions.  Under the null hypothesis $v=0$, the distribution of $\Lambda$ simplifies to a mixture
\beq
\Lambda \stackrel{v = 0}{\sim} (1-w_2)\chi^2_0 + 0.5\chi^2_1 + w_2\chi^2_2
\eeq
of three chisquare distributions. The weight $w_2$ of the $\chi^2_2$-distribution can be obtained from \re{wDef}. Indeed, since $\baC=[0,\infty)^2$ we can choose $\baV=\Id_2$ and $\baU=\baV\baA^{-1}=\baI^{-1/2}$. Letting $\bau_1$ and $\bau_2$ be the two rows of $\baU$ we find from \re{wDef} that
\beq
w_2 = \frac{\cos^{-1}(-\bau_1 \bau_2^\tsp/\| \bau_1\|_2 \|\bau_2\|_2)}{2\pi}
=  \frac{\cos^{-1}(-\baI_{12}^{-1}/\sqrt{\baI_{11}^{-1}\baI_{22}^{-1}})}{2\pi}
=  \frac{\cos^{-1}(\baI_{12}/\sqrt{\baI_{11}\baI_{22}})}{2\pi}.
\lb{w2Polar}
\eeq

\section{Discussion}\lb{Sec:Disc}

The limiting distribution of the LRT statistic under the null hypothesis, or a
sequence of contiguous alternatives, is the supremum $\La =
\sup_{t\in\mathcal{T}} X(t) = \|X\|_{\mathcal{T}}$ of a (noncentral)
$\bachi^2$-process over $\cT$, the set of possible values of the nuisance
parameter $t$ that is unidentifiable under the null hypothesis. This is achieved
in settings not covered by existing theory. Classical results for boundary
inference without nuisance parameters
\citep{chernoff1954distribution,self1987asymptotic,geyer1994asymptotics},
testing whether a single parameter is zero or positive in the presence of
nuisance parameters \citep{da1977, da1987, risk2005}, and testing whether a
density is a mixture of two components or not \citep{chen2001large} are obtained
as special cases. The limiting distribution has the same form whether or not the
information is singular.

For composite null hypotheses $H_0:\xi\in\Xi_0$, Theorem~\ref{thm:composite}
shows that the limiting distribution of the LRT statistic is
$\|X-X^0\|_{\mathcal{T}}$, where $X$ and $X^0$ are the (noncentral)
$\bachi^2$-processes corresponding to the full model and null hypothesis,
respectively. This implies that identifiable components of the null hypothesis
cancel exactly. When the tangent cone of $\Xi_0$ is a linear subspace,
Theorem~\ref{thm:decomposition} further identifies the limiting distribution of
the LRT statistic as $\La=\sup_{t\in\mathcal{T}}\|P_{K(t)}Z(t)\|_2^2$, a
supremum of (noncentral) $\bar{\chi}^2$ variables $X(t) = \|P_{K(t)}Z(t)\|_2^2$
determined by the active boundary constraints. This is illustrated for the
mixture problem of Example~\ref{ex:gauss_mix_2} and recovers the known-variance
case of \citet{chen2003tests}. In settings where a parameter is unidentifiable
under the null in one parametrization but not another, the limit is the same
regardless: Section \ref{sec:polar_ex} illustrates this in a setting where
reparameterizing in terms of polar coordinates introduces an unidentifiable
nuisance parameter, yet the limiting distribution of the LRT statistic is
unchanged.

Our framework includes the number $p$ of parameters to be tested, the number $r$
of boundary constraints on these parameters, as well as the dimensionalities $q$
and $s$ of the nuisance parameters that are not identifiable and identifiable
under the null hypothesis, respectively. Related results involving maxima of
(noncentral) $\bachi^2$-processes but without a general choice of $p$, $r$, $q$ and $s$ include
\citet{risk2005} ($p=r=1$, arbitrary $q,s$), \citet{anpl1995} ($r=0$, general
$p,q,s$), and \citet{an1996} ($q=0$, general $p,r,s$). The set $\cT$ of
possible values of the nuisance parameter vector that is unidentifiable under
the null hypothesis could be a functional space ($q=\infty$), although $\cT$
is a subset of a finite-dimensional Euclidean space ($q<\infty$) in all our
examples. Our examples further have $p\leq 2$; cases with $p>2$, such as
mixtures with more than two components or linkage analysis with two disease
genes, fit our framework but involve weights of the (noncentral)
$\bar{\chi}^2$-process that are often harder to characterize explicitly.

Although all our examples are based on likelihoods for independent data (and
most often likelihoods for i.i.d.\ data), our framework is much more general than
this. The most important requirement for our asymptotic results to hold is weak
convergence of the LRT statistic towards the supremum of a stochastic process,
regardless of whether data exhibit serial correlations or not. 

As mentioned in Section \ref{Sec:Intro}, there are many other applications of
the framework of the present article than mixture models and genetic linkage
analysis. For several of these applications, it would be interesting to
characterize the limiting (noncentral) $\bachi^2$-process explicitly.
Change-point testing \citep{anpl1995} is one such application, with $t$ the
change point and $\xi$ the level change after the change point.
Unidentifiability under the null hypothesis of no change point follows
immediately, and our general theory does not require i.i.d.\ data to analyze the
asymptotic behavior of the LRT statistic for this model.

A systematic treatment, including verification of the assumptions of Section
\ref{sec:asy_dist} for triangular-array models and characterization of the
(noncentral) $\bar{\chi}^2$-process limit when $\xi_0$ lies on the boundary of
$\Xi$, is another promising direction for future work. In a related preprint,
\citet{cox2022generalized} develops a generalized argmax theorem that handles
structural break estimation and boundary inference using Painlev\'e--Kuratowski
set convergence. These results may provide useful tools for the program of the
present article.

\section*{Acknowledgments}
Ola H\"ossjer was supported by the Swedish Research Council, contract number 621-2005-2810.
The authors wish to thank Annica Dominicus and Tobias Ryden for providing valuable references.

\bibliographystyle{plainnat}
\bibliography{lrt_nuis_bdry_2}
\newpage
\begin{appendix}
\section*{Supplementary material}
\label{app:proofs}

\setcounter{equation}{0}
\renewcommand{\thesection}{A}
\renewcommand{\theequation}{A.\arabic{equation}}
\renewcommand{\theHequation}{A.\arabic{equation}}

\subsection{Proofs}\label{app:proofs_sub}

\begin{proof}[Proof of Theorem \ref{thm:main_nonsing}]
    By lemmas \ref{lem:Xtilde_Xn}, \ref{lem:tildeX_checkX}, and
    \ref{lem:checkX_X}, $\Vert  X_n - \tilde{X}_n\Vert_{\mathcal{T}} =
    o_{p^*}(1)$, $\Vert \tilde{X}_n - \check{X}_n\Vert_{\mathcal{T}} =
    o_{p^*}(1)$, and $\check{X}_n \Lto X$, respectively. Thus, by Slutsky's lemma
    \citep[Example 1.4.7]{vandervaart2023weak}, $(X_n - \tilde{X}_n, \tilde{X}_n
    - \check{X}_n, \check{X}_n) \Lto (0, 0, X)$, and hence by continuous mapping
    theorem \citep[Theorem 1.3.6]{vandervaart2023weak}, $X_n = (X_n -
    \tilde{X}_n) + (\tilde{X}_n - \check{X}_n) + \check{X}_n \Lto 0 + 0 + X =
    X$. Note the requirement that $X$ be separable holds because $X$ is tight and
    $F(\mathcal{T})$ a metric space \citep[p.15]{vandervaart2023weak}.
\end{proof}

For any closed $A, A' \subseteq \R^{p}$ and $\bar{B}_M = \{z \in \R^p: \Vert
z\Vert_2 \leq M\}$, the closed ball centered at the origin with radius $M > 0$,
define
\begin{equation*}
    d_M(A,A') = \max\left(\sup_{z\in A\cap \bar{B}_M} \Vert z- P_{A'}(z)\Vert_2,
\sup_{z' \in A'\cap \bar{B}_M} \Vert z' - P_{A}(z')\Vert_2\right)
\end{equation*}
and
\begin{equation}
d(A,A') = \sum_{M=1}^\infty 2^{-M}\min\{d_M(A,A'),1\}.
\lb{dDef}
\end{equation}
Then $d$ defines a metric between closed sets. See for example
\citet{geyer1994asymptotics} for the connection between Assumption
\ref{assm:cone} and Chernoff regularity \citep{chernoff1954distribution}.

\begin{lemma} \label{lem:set_dist}
Under Assumptions \ref{assm:inf_bound}, \ref{assm:closed}, and \ref{assm:cone},
\begin{equation*}
\lim_{n\to\infty} \sup_{t\in\mathcal{T}} d(\Delta_n(t),\Delta(t)) = 0.
\end{equation*}
Moreover, for any $M>0$,
\begin{equation*}
\lim_{n\to\infty}\sup_{t\in\mathcal{T},\Vert z\Vert_2\leq M}
\left| \Vert z -  P_{\Delta_n(t)}z\Vert_2^2 - \Vert z -  P_{\Delta(t)} z \Vert_2^2 \right| = 0
\end{equation*}
and
\begin{equation*}
\lim_{n\to\infty}\sup_{t\in\mathcal{T},\Vert z\Vert_2\leq M}
\left| \Vert P_{\Delta_n(t)}z\Vert_2^2 - \Vert P_{\Delta(t)} z \Vert_2^2 \right| = 0.
\end{equation*}

\end{lemma}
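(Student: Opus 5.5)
The plan is to transfer the uniform convergence of rescaled sets in Assumption~\ref{assm:cone} through the linear maps $A(t)^\tsp$, using the eigenvalue bounds of Assumption~\ref{assm:inf_bound}. Write $C_s(t)=s^{-1}\{\Xi(t)-\xi_0\}$, so that $\Delta_n(t)=A(t)^\tsp C_{n^{-1/2}}(t)$ and $\Delta(t)=A(t)^\tsp C(t)$. Both sets are closed, by Assumption~\ref{assm:closed} and because $C(t)$ is a closed cone. Since $d$ is a weighted sum of $\min\{d_M,1\}$, it suffices to show that for each fixed $M$,
\[
\sup_t d_M(\Delta_n(t),\Delta(t))\to 0;
\]
the tail $\sum_{M>M_0}2^{-M}$ is then small uniformly.

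\textbf{Step 1 (transfer of $d_M$).} Take $z\in\Delta_n(t)\cap\bar B_M$ and write $z=A(t)^\tsp c$ with $c\in C_{n^{-1/2}}(t)$. Then $\Vert c\Vert_2\le \underline{\kappa}^{-1/2}M=:M'$. By Assumption~\ref{assm:cone}, $d_{\lceil M'\rceil}(C_{n^{-1/2}}(t),C(t))\to0$ uniformly in $t$. Hence there is $c'\in C(t)$ with $\Vert c-c'\Vert_2\le \epsilon_n$, and $A(t)^\tsp c'\in\Delta(t)$ satisfies
\[
\Vert z-A(t)^\tsp c'\Vert_2\le \bar\kappa^{1/2}\epsilon_n .
\]
Since the projection is the nearest point, $\Vert z-P_{\Delta(t)}z\Vert_2\le\bar\kappa^{1/2}\epsilon_n$. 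The symmetric direction is identical. This gives the first claim. One small point: $d\to0$ implies $d_{M}\to0$ for each fixed $M$ (the $2^{-M}$ weight is fixed), so extracting the $d_{M'}$ statement from Assumption~\ref{assm:cone} is legitimate.

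\textbf{Step 2 (distance functions).} Let $\phi_A(z)=\Vert z-P_Az\Vert_2=\operatorname{dist}(z,A)$. For $\Vert z\Vert_2\le M$ we have $\phi_A(z)\le M$ because $0\in A$. Moreover the nearest point $P_Az$ has norm at most $2M$. With $A=\Delta_n(t)$ and $B=\Delta(t)$, we get
\[
\phi_B(z)\le \Vert z-P_Az\Vert_2+\Vert P_Az-P_B(P_Az)\Vert_2\le \phi_A(z)+d_{2M}(A,B).
\]
By symmetry, $|\phi_A(z)-\phi_B(z)|\le d_{2M}(A,B)$. Squaring uses $|a^2-b^2|\le 2M|a-b|$ for $a,b\le M$. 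Combined with Step 1 applied at radius $2M$, this gives the second claim uniformly in $t$ and $\Vert z\Vert_2\le M$. That $0\in\Delta_n(t)$ follows from $\xi_0\in\Xi(t)$, which Assumption~\ref{assm:cone} forces as remarked in the text.

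\textbf{Step 3 (projection norms).} For the third claim I would use $\Vert P_Az\Vert_2^2=\Vert z\Vert_2^2-\phi_A(z)^2$, which is valid when $A$ is a closed convex cone. However, $\Delta_n(t)$ need not be a cone. This is the main obstacle. The route I would try first avoids the cone identity and compares the nearest points directly. Because $\phi$ agrees up to $o(1)$ uniformly, one would like $P_{\Delta_n}z$ to be close to $P_\Delta z$, but for nonconvex sets projections can jump.

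The fallback is to interpret the third display through the identity already used in \eqref{XtProj} and in the proof of Lemma~\ref{lem:tildeX_checkX}, namely as $\Vert z\Vert_2^2-\phi(z)^2$. In that reading it follows immediately from Step 2, and that is the form actually used downstream. If a literal statement is needed, I would add convexity of $\Delta(t)$. Then $P_\Delta$ is $1$-Lipschitz and strongly characterized: any point $a$ within $o(1)$ of $\Delta(t)$ whose distance to $z$ is within $o(1)$ of $\phi_\Delta(z)$ must be within $o(1)$ of $P_\Delta z$, by the parallelogram/strong-convexity inequality. Taking $a=P_{\Delta_n}z$, this yields $\Vert P_{\Delta_n}z-P_\Delta z\Vert_2\to0$ uniformly, and hence the norms converge.
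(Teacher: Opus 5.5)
Your Steps 1 and 2 are sound. Step 1 is the paper's own transfer argument through $A(t)^\tsp$ using $\underline{\kappa}$ and $\bar{\kappa}$. In Step 2 your use of $d_{2M}$ is more careful than the paper, which asserts $\Vert P_{\Delta_n(t)}z\Vert_2\le M$, although for a closed non-cone set containing $0$ one only gets $2M$.

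The genuine gap is Step 3: the third display is never proved under the lemma's hypotheses. The ``fallback'' replaces $\Vert P_{\Delta_n(t)}z\Vert_2^2$ by $\Vert z\Vert_2^2-\phi_{\Delta_n(t)}(z)^2$. That is a different quantity when $\Delta_n(t)$ is not a cone, so it only restates the second claim. The strong-convexity route assumes $\Delta(t)$ is convex, whereas Assumption~\ref{assm:cone} only provides a closed cone. Your diagnosis is right, and the paper's argument does not get around it either. It bounds the cross term $2z^\tsp\{P_{\Delta(t)}z-P_{\Delta_n(t)}z\}$ by $2\Vert z\Vert_2$ times a difference of norms. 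Cauchy--Schwarz only yields the norm of the difference, so that step implicitly needs the projection points to converge.

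The missing idea is that only the norms must converge, and for that the cone property of $\Delta(t)$ suffices, convex or not. The identity $\phi_{\Delta}(z)^2=\Vert z\Vert_2^2-\Vert P_{\Delta}z\Vert_2^2$ holds for every closed cone. Indeed, $\lambda\mapsto\Vert z-\lambda P_\Delta z\Vert_2^2$ on $[0,\infty)$ is minimized at $\lambda=1$, so $z^\tsp P_\Delta z=\Vert P_\Delta z\Vert_2^2$.

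Put $\mu=\Vert P_\Delta z\Vert_2$. Take $a=\lambda u$ with $\lambda\ge0$, $u\in\Delta$, $\Vert u\Vert_2=1$, and set $c=(u^\tsp z)_+$. Testing $cu\in\Delta$ gives $c\le\mu$, and
\[
\Vert z-a\Vert_2^2\ \ge\ \Vert z\Vert_2^2-c^2+(\lambda-c)^2 .
\]
Hence $\Vert z-a\Vert_2^2\le\phi_\Delta(z)^2+\epsilon$ forces $\mu^2-c^2\le\epsilon$ and $(\lambda-c)^2\le\epsilon$. It follows that $\bigl|\Vert a\Vert_2-\mu\bigr|\le2\sqrt{\epsilon}$: near-nearest points of a cone have nearly the right norm, even when nearest points are far apart.

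Now fix $\Vert z\Vert_2\le M$ and set $\eta_n=\sup_{t}d_{2M}(\Delta_n(t),\Delta(t))$. Let $p_n=P_{\Delta_n(t)}z$ (any selection, with $\Vert p_n\Vert_2\le2M$) and $a=P_{\Delta(t)}p_n$. Then $\Vert p_n-a\Vert_2\le\eta_n$, and by your Step 2, $\Vert z-a\Vert_2\le\phi_{\Delta(t)}(z)+2\eta_n$. So $\epsilon=4M\eta_n+4\eta_n^2$ works, and
\[
\bigl|\Vert P_{\Delta_n(t)}z\Vert_2-\Vert P_{\Delta(t)}z\Vert_2\bigr|\le\eta_n+2\sqrt{4M\eta_n+4\eta_n^2}\longrightarrow0
\]
uniformly in $t$ and $\Vert z\Vert_2\le M$. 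Multiplying by $\Vert P_{\Delta_n(t)}z\Vert_2+\Vert P_{\Delta(t)}z\Vert_2\le3M$ gives the third claim. This needs no convexity and does not depend on which nearest point in $\Delta_n(t)$ is selected.
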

\begin{proof}
    For the first claim it suffices to prove that for any $M>0$,
    \begin{equation*}
        \lim_{n\to\infty} \sup_{t\in\mathcal{T}} d_M(\Delta_n(t),\Delta(t)) = 0.
    \end{equation*}
    Let $C_s(t) = \{\Xi(t) - \xi_0\}/s$ and pick $z \in \Delta_n(t)\cap \bar{B}_M$ and $z' \in \Delta(t)\cap \bar{B}_M$. Then, by definition of these sets, $z = A(t)^\tsp y$ for a $y \in C_{n^{-1/2}}(t)$ and $z' = A(t)^\tsp y'$ for a $y' \in C(t)$. Thus, $\Vert y\Vert_2, \Vert y'\Vert_2 \leq M/\underline{\kappa}^{1/2}$, where $\underline{\kappa} > 0$ is from Assumption \ref{assm:inf_bound}. Moreover,
    \begin{align*}
        \Vert z - z' \Vert_2^2 = (y - y')^\tsp A(t) A(t)^\tsp (y - y')
         = (y - y')^\tsp I(t) (y - y')
        \leq \bar{\kappa}\Vert y - y'\Vert_2^2,
    \end{align*}
    where $\bar{\kappa} < \infty$ is also from Assumption \ref{assm:inf_bound}.
    Thus, since $y \in C_{n^{-1/2}}(t)\cap B(0, M/\underline{\kappa}^{1/2})$
    and $y' \in C(t) \cap B(0, M/\underline{\kappa}^{1/2})$,
    \begin{align*}
        \sup_{t\in\mathcal{T}} d_M\{\Delta_n(t),\Delta(t)\} &\leq
            \sqrt{\bar{\kappa}} \sup_{t\in\mathcal{T}}
                d_{M/\underline{\kappa}^{1/2}}\{C_{n^{-1/2}}(t), C(t)\},
    \end{align*}
    which tends to zero by Assumption \ref{assm:cone}.

    To prove the second claim, pick arbitrary $z\in \bar{B}_M$ and $t\in\mathcal{T}$. We have $\Vert P_{\Delta_n(t)}z\Vert_2\leq M$ and
    $\Vert P_{\Delta(t)}z\Vert_2\leq M$, by definition of the projection since $0\in \Delta_n(t)$ and $0\in \Delta(t)$. Thus,
    \begin{align*}
        \left| \Vert z-P_{\Delta_n(t)}z\Vert_2^2 - \Vert z-P_{\Delta(t)}z\Vert_2^2 \right|
        &\leq 2M \left| \Vert z-P_{\Delta_n(t)}z\Vert_2 - \Vert z-P_{\Delta(t)}z\Vert_2 \right| \\
        &\leq 2 d_M\{\Delta_n(t),\Delta(t)\} \\
        &\leq 2M \sup_{t\in\mathcal{T}} d_M\{\Delta_n(t),\Delta(t)\},
    \end{align*}
    which tends to zero by the first claim.

    For the third claim, observe that
    \[
        \Vert P_{\Delta_n(t)}z\Vert_2^2 = \Vert P_{\Delta(t)}z - z + z\Vert_2^2 = 
        \Vert z - P_{\Delta(t)}z\Vert_2^2 + 2z^\tsp \{z - P_{\Delta(t)}z\} + \Vert z\Vert_2^2.
    \]
    so that
    \begin{align*}
        \Vert P_{\Delta_n(t)}z\Vert_2^2 - \Vert P_{\Delta(t)}z\Vert_2^2 = \Vert z - P_{\Delta(t)}z\Vert_2^2 - \Vert z - P_{\Delta_n(t)}z\Vert_2^2 + 2z^\tsp \{z - P_{\Delta_n(t)}z - (z - P_{\Delta(t)}z)\}.
    \end{align*}
    By triangle and Cauchy--Schwarz inequalities, the absolute value of the last display is less than
    \begin{align*}
        | \Vert z - P_{\Delta(t)}z\Vert_2^2 - \Vert z - P_{\Delta_n(t)}z\Vert_2^2| + 2\Vert z\Vert_2 |\Vert z - P_{\Delta_n(t)}z\Vert_2 - \Vert z - P_{\Delta(t)}z\Vert_2|.
    \end{align*}
    After taking the supremum over $z \in \bar{B}_M$ and $t \in \mathcal{T}$,
    the first term tends to zero by the second claim, and it was argued in the
    proof of that claim that the second term also tends to zero.
\end{proof}

\begin{proof}[Proof of Theorem \ref{thm:decomposition}]
Since $K(t)$ is the intersection of the closed convex cone $\Delta(t)$ with
the closed linear subspace $\{\Delta^0(t)\}^\perp$, it is a closed convex cone
contained in $\{\Delta^0(t)\}^\perp$.

For the direct sum, take any $\delta\in\Delta(t)$ and write $\delta = \delta_0 +
\kappa$ with $\delta_0 = P_{\Delta^0(t)}\delta\in\Delta^0(t)$ and $\kappa =
P_{\{\Delta^0(t)\}^\perp}\delta$. Since $\Delta(t)$ is a convex cone containing
the subspace $\Delta^0(t)$, it is closed under addition of its elements, and
$-\delta_0\in\Delta^0(t)\subseteq\Delta(t)$, so $\kappa = \delta +
(-\delta_0)\in\Delta(t)$. Thus $\kappa\in K(t)$, giving
$\Delta(t)\subseteq\Delta^0(t)+K(t)$. The reverse inclusion follows because
$\Delta^0(t)\subseteq\Delta(t)$ and $K(t)\subseteq\Delta(t)$. Orthogonality and
uniqueness hold by construction, so $\Delta(t) = \Delta^0(t)\oplus K(t)$.

For the projection identity, write $z = z_0 + z_\perp$ with
$z_0 = P_{\Delta^0(t)}z$ and $z_\perp = P_{\{\Delta^0(t)\}^\perp}z$. Any
$\delta\in\Delta(t)$ decomposes as $\delta_0+\kappa$ with
$\delta_0\in\Delta^0(t)$, $\kappa\in K(t)$, $\delta_0\perp\kappa$, so
$\|z-\delta\|_2^2 = \|z_0-\delta_0\|_2^2 + \|z_\perp-\kappa\|_2^2$.
The two terms are minimized independently: the first at
$\delta_0 = z_0$ (since $\Delta^0(t)$ is a subspace) and the second at
$\kappa = P_{K(t)}z_\perp$, giving $P_{\Delta(t)}z = z_0 + P_{K(t)}z_\perp$.
By the Pythagorean theorem,
$\|P_{\Delta(t)}z\|_2^2 - \|P_{\Delta^0(t)}z\|_2^2 = \|P_{K(t)}z_\perp\|_2^2$.
Finally, since $K(t)\subseteq\{\Delta^0(t)\}^\perp$, for any $z = z_0 + z_\perp$
the component $z_0\in\Delta^0(t)$ is irrelevant to $P_{K(t)}$, so
$P_{K(t)}z_\perp = P_{K(t)}z$, giving the stated identity.
Substituting $z = Z(t)$ gives $X(t) - X^0(t) = \|P_{K(t)}Z(t)\|_2^2$ for each $t$, and taking the supremum yields the result.
\end{proof}

\begin{proof}[Proof of Lemma \ref{lem:rate}]
For an arbitrary $c_2 > 0$ and $c_1 = \underline{\kappa}/2$ pick a $c_3$ small
enough that \eqref{eq:equicont} holds. Let us denote $\bar{B} =
\bar{B}_{c_3}(\xi_0; t) = \{\xi \in \Xi(t) : \Vert \xi - \xi_0\Vert_2 \leq
c_3\}$ for simplicity. Then for all large enough $n$, with outer probability at least $1 - c_2$, for $\xi \in \bar{B}$,
\begin{align*}
    \ell_n(\xi, t) - \ell_n(\xi_0,t) &\leq 
        (\xi - \xi_0)^\tsp n^{1/2}A(t)Z_n(t) - \frac{n \underline{\kappa}}{4}\Vert \xi - \xi_0\Vert_2^2 \\
        &\leq n^{1/2}\bar{\kappa}^{1/2}\Vert \xi - \xi_0\Vert_2 \Vert Z_n\Vert_{\mathcal{T}} 
        - \frac{n \underline{\kappa}}{4}\Vert \xi - \xi_0\Vert_2^2.
\end{align*}
Here, $\underline{\kappa}$ and $\bar{\kappa}$ are from Assumption \ref{assm:inf_bound}. Let $T_n(\Vert \xi - \xi_0\Vert_2)$ denote the quadratic in the last right-hand
side. For any $C_1 > 0$, note $\Vert\hat{\xi}_n(t) - \xi_0\Vert_2 >
\frac{C_1}{\sqrt{n}}$ can only happen if at least one of the following holds:
(i) $\hat{\xi}_n(t) \notin \bar{B}$; (ii) the upper bound in the last display
fails for some $\xi \in \bar{B}$; or (iii) (i) and (ii) do not hold, in which
case taking $\xi = \hat{\xi}_n(t)$ in the upper bound and using that, by definition, $\ell_n(\hat{\xi}_n(t), t) \geq \ell_n(\xi_0,t) - O_{p^*}(1)$ yields $C_1/\sqrt{n} < \Vert \hat{\xi}_n(t) - \xi_0\Vert_2 \leq c_3$ and
\[
    -O_{p^*}(1) \leq \ell_n(\hat{\xi}_n(t), t) - \ell_n(\xi_0,t)  \leq 
    T_n(\Vert \hat{\xi}_n(t) - \xi_0\Vert_2) \leq \sup_{\Vert \xi - \xi_0\Vert_2 > 
        C_1/\sqrt{n}} T_n(\Vert \xi - \xi_0\Vert_2).
\]
Now (i) has outer probability tending to zero by Assumption
\ref{assm:consistency} and (ii) has outer probability at most $c_2$ for all
large enough $n$. Thus, it suffices to show that the outer probability of (iii)
can be made arbitrarily small asymptotically by choosing $C_1$ large enough.

To show this, note, for every $C_2 > 0$,
\begin{align*}
    \pr^*\left\{\sup_{\Vert \xi - \xi_0\Vert_2 > C_1/\sqrt{n}}T_n(\Vert \xi - \xi_0\Vert_2) \geq -O_{p^*}(1)\right\} &\leq 
    \pr^*\left\{\sup_{\Vert \xi - \xi_0\Vert_2 > C_1/\sqrt{n}}T_n(\Vert \xi - \xi_0\Vert_2) \geq -C_2/2\right\} \\
    &\quad
    + \pr^*\left\{O_{p^*}(1) \geq C_2/2\right\},
\end{align*}
and the last term can be made arbitrarily small asymptotically by choosing $C_2$
large enough. To control the other term, by the quadratic formula,
$T_n(\Vert \xi - \xi_0\Vert_2)\geq -C_2/2$ can only hold if
\[
    \sqrt{n}\Vert \xi - \xi_0\Vert_2 \leq \frac{\bar{\kappa}^{1/2} 
    \Vert Z_n\Vert_{\mathcal{T}} + \sqrt{\bar{\kappa}\Vert Z_n\Vert_{\mathcal{T}}^2 + C_2\underline{\kappa}/2}}{\underline{\kappa}/2}.
\]
Thus, $\sup_{\Vert \xi - \xi_0\Vert_2 > C_1 / \sqrt{n}} nT_n(\Vert \xi -
\xi_0\Vert_2)\geq -C_2/2$ can only happen if the right-hand side in the last
display is greater than $C_1$. But that right-hand-side is asymptotically tight by the continuous mapping theorem \citep[Theorem 1.3.6]{vandervaart2023weak}
since $\Vert Z_n\Vert_{\mathcal{T}}$ is by Assumption \ref{assm:zn_conv}
\citep[Definition 1.3.7 and Lemma 1.3.8]{vandervaart2023weak}. Thus, we can
pick $C_1$ large enough to make the outer probability of that event arbitrarily
small asymptotically. This completes the proof.
\end{proof}

\begin{lemma} \label{lem:rate_singular}
    Under Assumptions \ref{assm:Rtilde_bound} and \ref{assm:sing_Zn}, every $\hat{\delta}_n(t)$ such that $\sup_{t \in \mathcal{T}_{ns}} \Vert \hat{\delta}_n(t)\Vert_2 = o_{\pr^*}(\sqrt{n})$ and $Q_n(\hat{\delta}_n(t), t) + 2\Vert \hat{\delta}_n(t)\Vert_2^2\tilde{R}_n(\hat{\delta}_n(t), t) \geq \sup_{\delta \in \Delta_n(t)} \{Q_n(\delta, t) + 2\Vert \delta\Vert_2^2\tilde{R}_n(\delta, t)\} - O_{\pr^*}(1)$ satisfies
    \[
        \sup_{t \in \mathcal{T}_{ns}} \Vert \hat{\delta}_n(t)\Vert_2 = O_{\pr^*}(1).
    \]
\end{lemma}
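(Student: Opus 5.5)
The plan is to mimic the proof of Lemma \ref{lem:rate}, now working directly in the standardized $\delta$-coordinates, where no eigenvalue bound on $I(t)$ is needed. Write $\hat{G}_n(\delta,t)=Q_n(\delta,t)+2\Vert\delta\Vert_2^2\tilde{R}_n(\delta,t)$.

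First, choose constants. Fix $c_2>0$ and take $c_1=1/4$ in Assumption \ref{assm:Rtilde_bound}. This gives a $c_3>0$ and an event $E_n$ with $\limsup_n \pr^*(E_n^c)<c_2$, where
\[
E_n=\Bigl\{\sup_{t\in\mathcal{T}_{ns}}\sup_{\delta\in\tilde{B}(n,t,c_3)}|\tilde{R}_n(\delta,t)|\le 1/4\Bigr\}.
\]
On $E_n$, every $t\in\mathcal{T}_{ns}$ and every $\delta\in\Delta_n(t)$ with $\Vert\delta\Vert_2\le\sqrt{n}c_3$ satisfy
\[
\hat{G}_n(\delta,t)\le 2\Vert\delta\Vert_2\Vert Z_n\Vert_{\mathcal{T}_{ns}}-\tfrac12\Vert\delta\Vert_2^2=:T_n(\Vert\delta\Vert_2).
\]
Since $0\in\Delta_n(t)$ and $\hat{G}_n(0,t)=0$, the supremum of $\hat G_n(\cdot,t)$ over $\Delta_n(t)$ is nonnegative. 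The approximate-maximizer property then gives $\hat{G}_n(\hat{\delta}_n(t),t)\ge -W_n$, where $W_n=O_{\pr^*}(1)$ does not depend on $t$.

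Next, decompose the event $\sup_t\Vert\hat{\delta}_n(t)\Vert_2>C_1$ into three pieces, as in Lemma \ref{lem:rate}:
\begin{itemize}
\item[(i)] $\sup_t\Vert\hat{\delta}_n(t)\Vert_2>\sqrt{n}c_3$. This has outer probability tending to zero because $\sup_t\Vert\hat\delta_n(t)\Vert_2=o_{\pr^*}(\sqrt{n})$. This is where the hypothesis replaces Assumption \ref{assm:consistency}.
\item[(ii)] $E_n^c$, which has limsup outer probability below $c_2$.
\item[(iii)] Neither (i) nor (ii) holds, yet some $t$ has $\Vert\hat{\delta}_n(t)\Vert_2>C_1$.
\end{itemize}
In case (iii), plugging $\delta=\hat{\delta}_n(t)$ into the bound gives $-W_n\le T_n(\Vert\hat\delta_n(t)\Vert_2)$. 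Split further on $W_n\ge C_2$ versus $W_n<C_2$. In the second case, solving the quadratic inequality $T_n(r)\ge -C_2$ gives
\[
r\le 2\Vert Z_n\Vert_{\mathcal{T}_{ns}}+\sqrt{4\Vert Z_n\Vert_{\mathcal{T}_{ns}}^2+2C_2}.
\]
So case (iii) forces either $W_n\ge C_2$ or this right-hand side to exceed $C_1$. Both are asymptotically small in outer probability for large $C_2$ and then large $C_1$. For the second, $\Vert Z_n\Vert_{\mathcal{T}_{ns}}$ is asymptotically tight by Assumption \ref{assm:sing_Zn} and the continuous mapping theorem. Since $c_2$ is arbitrary, this yields $\sup_t\Vert\hat\delta_n(t)\Vert_2=O_{\pr^*}(1)$.

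The step I expect to be the main obstacle is the interplay between the uniformity in $t$ and the $O_{\pr^*}(1)$ slack. The slack must be a single variable $W_n$ that does not depend on $t$, exactly as in Assumption \ref{assm:consistency}. Everything else, namely the $o_{\pr^*}(\sqrt n)$ bound and the supremum bounds on $\tilde R_n$ and $Z_n$, is already uniform over $\mathcal{T}_{ns}$. This is why the argument avoids Assumption \ref{assm:sing_inf} entirely: the radius of the equicontinuity ball, $\sqrt{n}c_3$ in $\delta$-units, is stated directly in standardized coordinates by Assumption \ref{assm:Rtilde_bound}. Care is also needed with outer probabilities of unions, which I will handle by countable subadditivity of $\pr^*$.
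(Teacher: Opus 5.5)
Your proposal is correct and follows essentially the same route as the paper's proof. Both use $c_1=1/4$ in Assumption~\ref{assm:Rtilde_bound} and the same three-way split: escape from the ball of radius $\sqrt{n}c_3$, failure of the remainder bound, and a residual event controlled by the quadratic formula plus asymptotic tightness of $\Vert Z_n\Vert_{\mathcal{T}_{ns}}$. Your single $t$-free slack variable $W_n$ matches how the paper implicitly treats the $O_{\pr^*}(1)$ term, and your root $2\Vert Z_n\Vert_{\mathcal{T}_{ns}}+\{4\Vert Z_n\Vert_{\mathcal{T}_{ns}}^2+2C_2\}^{1/2}$ is correct for $c_1=1/4$ (the paper's displayed bound has a harmless slip in the constants).
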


\begin{proof}[Proof of Lemma \ref{lem:rate_singular}]
Fix an arbitrary $c_2 > 0$ and choose $c_1 \in (0, 1/2)$ (e.g., $c_1 = 1/4$). Pick a $c_3$ small enough that Assumption \ref{assm:Rtilde_bound} holds. Let us denote $\tilde{B} = \tilde{B}(n, t, c_3)$ for simplicity. Then for all large enough $n$, with outer probability at least $1 - c_2$, for $\delta \in \tilde{B}$,
\begin{align*}
    Q_n(\delta, t) + 2\Vert \delta\Vert_2^2\tilde{R}_n(\delta, t) &\leq Q_n(\delta, t) + 2c_1\Vert \delta\Vert_2^2 \\
        &\leq 2\Vert \delta\Vert_2 \Vert Z_n\Vert_{\mathcal{T}_{ns}} - (1-2c_1)\Vert \delta\Vert_2^2.
\end{align*}
Let $T_n(\Vert \delta\Vert_2)$ denote the quadratic in the last right-hand side. For any $C_1 > 0$, note $\sup_{t \in \mathcal{T}_{ns}} \Vert\hat{\delta}_n(t)\Vert_2 > C_1$ can only happen if either (i) there exists a $t \in \mathcal{T}_{ns}$ such that $\hat{\delta}_n(t) \notin \tilde{B}$; (ii) the upper bound in the last display does not hold on $\tilde{B}$; or (iii) the bound holds, but for some $t \in \mathcal{T}_{ns}$ there exists a $\delta$ such that $C_1 < \Vert \delta\Vert_2 \leq \sqrt{n}c_3 $ and
\[
    -O_{p^*}(1) \leq Q_n(\delta, t) + 2\Vert \delta\Vert_2^2\tilde{R}_n(\delta, t) \leq T_n(\Vert \delta\Vert_2) \leq \sup_{\Vert \delta\Vert_2 > C_1} T_n(\Vert \delta\Vert_2).
\]
The last claim uses the definition of $\hat{\delta}_n(t)$ and the upper bound in the preceding display. Now (i) has outer probability tending to zero by the assumption that $\sup_{t \in \mathcal{T}_{ns}} \Vert \hat{\delta}_n(t)\Vert_2 = o_{\pr^*}(\sqrt{n})$ and (ii) has outer probability at most $c_2$ for all large enough $n$. Thus, it suffices to show that the outer probability of (iii) can be made arbitrarily small asymptotically by choosing $C_1$ large enough.

To show this, note, for every $C_2 > 0$,
\begin{align*}
    \pr^*\left\{\sup_{\Vert \delta\Vert_2 > C_1}T_n(\Vert \delta\Vert_2) \geq -O_{p^*}(1)\right\} &\leq 
    \pr^*\left\{\sup_{\Vert \delta\Vert_2 > C_1}T_n(\Vert \delta\Vert_2) \geq -C_2/2\right\} \\
    &\quad
    + \pr^*\left\{O_{p^*}(1) \geq C_2/2\right\},
\end{align*}
and the last term can be made arbitrarily small asymptotically by choosing $C_2$ large enough. To control the other term, by the quadratic formula, $T_n(\Vert \delta\Vert_2)\geq -C_2/2$ can only hold if
\[
    \Vert \delta\Vert_2 \leq \Vert Z_n\Vert_{\mathcal{T}_{ns}} + \sqrt{\Vert Z_n\Vert_{\mathcal{T}_{ns}}^2 + 2c_1 + C_2/2}.
\]
Thus, $\sup_{\Vert \delta\Vert_2 > C_1} T_n(\Vert \delta\Vert_2)\geq -C_2/2$ can only happen if the right-hand side in the last display is greater than $C_1$. But that right-hand-side is asymptotically tight since $\Vert Z_n\Vert_{\mathcal{T}_{ns}} = O_{\pr^*}(1)$ by Assumption \ref{assm:sing_Zn}. Thus, we can pick $C_1$ large enough to make the outer probability of that event arbitrarily small asymptotically. This completes the proof.
\end{proof}

\begin{lemma} \label{lem:Xtilde_Xn_singular}
    Under Assumptions \ref{assm:Rtilde_bound} and \ref{assm:sing_Zn}, if $\sup_{t \in \mathcal{T}_{ns}} \Vert \hat{\delta}_n(t)\Vert_2 = o_{\pr^*}(\sqrt{n})$, then $\sup_{t \in \mathcal{T}_{ns}} |X_n(t) - \tilde{X}_n(t)| = o_{\pr^*}(1)$.
\end{lemma}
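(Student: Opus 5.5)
The plan is to mimic the proof of Lemma \ref{lem:Xtilde_Xn}, working directly in the $\delta$-parametrization, where Assumption \ref{assm:Rtilde_bound} controls the remainder without any lower bound on $\emin\{I(t)\}$. Here $\hat{\delta}_n(t)$ is a $1/n$-approximate maximizer of $\delta\mapsto Q_n(\delta,t)+2\Vert\delta\Vert_2^2\tilde{R}_n(\delta,t)$ over $\Delta_n(t)$, for $t\in\mathcal{T}_{ns}$. Using \eqref{eq:X_n-process-delta}, $X_n(t)$ is the supremum of this function. In the same way, $\tilde{X}_n(t)=\sup_{\delta\in\Delta_n(t)}Q_n(\delta,t)$, and we let $\tilde{\delta}_n(t)$ be a $1/n$-approximate maximizer of $Q_n(\cdot,t)$ over $\Delta_n(t)$.

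\emph{Step 1: tightness of the maximizers.} Since $\hat{\delta}_n$ is a $1/n$-approximate maximizer, it satisfies the near-maximizer condition with $O_{\pr^*}(1)$ replaced by $1/n$. By hypothesis, $\sup_{t}\Vert\hat{\delta}_n(t)\Vert_2=o_{\pr^*}(\sqrt n)$. Lemma \ref{lem:rate_singular} then gives $\sup_{t\in\mathcal{T}_{ns}}\Vert\hat{\delta}_n(t)\Vert_2=O_{\pr^*}(1)$.

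For $\tilde{\delta}_n$, we repeat the argument after \eqref{eq:large_delta_negative}. Since $0\in\Delta_n(t)$, the supremum of $Q_n(\cdot,t)$ is nonnegative. On the event $\Vert Z_n\Vert_{\mathcal{T}_{ns}}\le C_1$, every $\delta$ with $\Vert\delta\Vert_2>3C_1$ gives $Q_n(\delta,t)<-3C_1<-1/n$. Hence $\Vert\tilde{\delta}_n\Vert_{\mathcal{T}_{ns}}\le 3C_1$ on that event. Because $\Vert Z_n\Vert_{\mathcal{T}_{ns}}=O_{\pr^*}(1)$ by Assumption \ref{assm:sing_Zn}, this yields $\Vert\tilde{\delta}_n\Vert_{\mathcal{T}_{ns}}=O_{\pr^*}(1)$.

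\emph{Step 2: reduction to a bounded set.} Fix $c_1>0$ and split $\pr^*(\sup_t|X_n-\tilde{X}_n|>c_1)$ as in \eqref{eq:part_a}. Choosing $C_1$ large makes the two tail terms arbitrarily small asymptotically. On the event where both $\hat{\delta}_n$ and $\tilde{\delta}_n$ have norm at most $C_1$, we use the approximate-maximizer properties twice together with $|\sup f-\sup g|\le\sup|f-g|$. This gives
\[
\sup_{t\in\mathcal{T}_{ns}}|X_n(t)-\tilde{X}_n(t)|\le 4/n+\sup_{t\in\mathcal{T}_{ns}}\sup_{\delta\in\Delta_n(t),\,\Vert\delta\Vert_2\le C_1}2\Vert\delta\Vert_2^2|\tilde{R}_n(\delta,t)|.
\]
We can replace each supremum by its value at the relevant near-maximizer, up to $1/n$. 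This works because both maximizers lie in the ball of radius $C_1$, and each of $X_n$ and $\tilde X_n$ is bounded below by the objective evaluated at the other's maximizer.

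\emph{Step 3: the remainder.} For every $c_3>0$ and all $n\ge C_1^2/c_3^2$, the ball $\{\delta\in\Delta_n(t):\Vert\delta\Vert_2\le C_1\}$ is contained in $\tilde{B}(n,t,c_3)$. Fix a target $c>0$ and $c_2>0$. Assumption \ref{assm:Rtilde_bound} gives a $c_3$ for which $\sup_{t}\sup_{\tilde B}|\tilde R_n|\le c/(2C_1^2)$ with limiting outer probability at least $1-c_2$. On that event the last supremum in Step 2 is at most $c$. Since $c$ and $c_2$ are arbitrary, the difference is $o_{\pr^*}(1)$.

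The main obstacle is Step 1 for $\hat{\delta}_n$. This is exactly where the $o_{\pr^*}(\sqrt n)$ hypothesis is needed, since it substitutes for the consistency argument that previously relied on Assumption \ref{assm:inf_bound}. One also has to check that approximate maximizers exist and that $X_n(t)$ is finite for $t\in\mathcal{T}_{ns}$. I would handle this on the high-probability event from the proof of Lemma \ref{lem:rate_singular}, where the objective is bounded above by $T_n$ on $\tilde{B}$.
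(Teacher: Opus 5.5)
Your proposal is correct and follows the paper's proof essentially step for step: tightness of $\hat{\delta}_n$ via Lemma~\ref{lem:rate_singular}, tightness of $\tilde{\delta}_n$ via the Cauchy--Schwarz bound on $Q_n$, the split as in \eqref{eq:part_a}, and Assumption~\ref{assm:Rtilde_bound} applied on $\{\Vert\delta\Vert_2\le C_1\}\subseteq\tilde{B}(n,t,c_3)$ once $C_1\le\sqrt{n}\,c_3$. The one loose end is your closing remark on finiteness of $X_n(t)$: the event you cite does not settle it, because it controls the objective only on $\tilde{B}$; however, the lemma's hypothesis already presupposes that the $1/n$-approximate maximizers $\hat{\delta}_n(t)$ exist, and the paper takes this for granted as well.
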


\begin{proof}[Proof of Lemma \ref{lem:Xtilde_Xn_singular}]
    Pick $\hat{\delta}_n(t)$ such that $Q_n(\hat{\delta}_n(t), t) + 2\Vert \hat{\delta}_n(t)\Vert_2^2\tilde{R}_n(\hat{\delta}_n(t), t) \geq \sup_{\delta \in \Delta_n(t)} \{Q_n(\delta, t) + 2\Vert \delta\Vert_2^2\tilde{R}_n(\delta, t)\} - 1/n$. Assuming $\sup_{t \in \mathcal{T}_{ns}} \Vert \hat{\delta}_n(t)\Vert_2 = o_{\pr^*}(\sqrt{n})$, Lemma \ref{lem:rate_singular} gives $\sup_{t \in \mathcal{T}_{ns}} \Vert \hat{\delta}_n(t)\Vert_2 = O_{\pr^*}(1)$. Pick a $\tilde{\delta}_n(t)$ such that $Q_n(\tilde{\delta}_n(t), t) \geq \sup_{\delta \in \Delta_n(t)} Q_n(\delta, t) - 1/n$. Since $0 \in \Delta_n(t)$ and $Q_n(0, t) = 0$, the supremum is non-negative, so $Q_n(\tilde{\delta}_n(t), t) \geq -1/n$. But for any $\delta$ with $\Vert \delta\Vert_2 > 2\Vert Z_n(t)\Vert_2$, by Cauchy--Schwarz,
    \begin{align} \label{eq:large_delta_negative_singular}
       Q_n(\delta, t) \leq 2\Vert \delta\Vert_2 \Vert Z_n(t)\Vert_2 - \Vert \delta\Vert_2^2 = \Vert \delta\Vert_2(2\Vert Z_n(t)\Vert_2 - \Vert \delta\Vert_2) < 0.
    \end{align}
    Thus, if $\Vert Z_n\Vert_{\mathcal{T}_{ns}} \leq C_1$, any $\Vert \delta\Vert_2 > 3C_1$ leads to a value less than $-3C_1$, which is less than $-1/n$ for all large enough $n$. Thus, for such $n$, $\sup_{t \in \mathcal{T}_{ns}} \Vert \tilde{\delta}_n(t)\Vert_2 \leq 3C_1$ whenever $\Vert Z_n\Vert_{\mathcal{T}_{ns}} \leq C_1$. Since $\Vert Z_n\Vert_{\mathcal{T}_{ns}} = O_{\pr^*}(1)$ by Assumption \ref{assm:sing_Zn}, we have $\sup_{t \in \mathcal{T}_{ns}} \Vert \tilde{\delta}_n(t)\Vert_2 = O_{\pr^*}(1)$.

    Next, for any $c_1, C_1 > 0$, by countable sub-additivity of outer probabilities,
    \begin{equation*}
    \begin{aligned}
        \pr^*\left(\sup_{t \in \mathcal{T}_{ns}} |X_n(t) - \tilde{X}_n(t)| > c_1\right) &\leq \pr^*\left(\sup_{t \in \mathcal{T}_{ns}} |X_n(t) - \tilde{X}_n(t)| > c_1, \sup_{t \in \mathcal{T}_{ns}} \Vert \hat{\delta}_n(t)\Vert_2 \leq C_1, \sup_{t \in \mathcal{T}_{ns}} \Vert \tilde{\delta}_n(t)\Vert_2 \leq C_1\right) \\
        &\quad +\pr^*\left(\sup_{t \in \mathcal{T}_{ns}} \Vert \hat{\delta}_n(t)\Vert_2 > C_1\right) + \pr^*\left(\sup_{t \in \mathcal{T}_{ns}} \Vert \tilde{\delta}_n(t)\Vert_2 > C_1\right).
    \end{aligned}
    \end{equation*}
    The last two terms can be made arbitrarily small by choosing $C_1$ large enough, so let us deal with the first term. Define $\tilde{\Upsilon}_{C_1,n} = \{(\delta, t): t \in\mathcal{T}_{ns}, \delta\in\Delta_n(t), \Vert\delta\Vert_2\leq C_1\}$. Then, using that the supremum of the difference is no greater than the difference of suprema, for outcomes where $\sup_{t \in \mathcal{T}_{ns}} \Vert\hat{\delta}_n(t)\Vert_2\leq C_1$ and $\sup_{t \in \mathcal{T}_{ns}} \Vert \tilde{\delta}_n(t)\Vert_2 \leq C_1$,
    \begin{align*}
        \sup_{t \in \mathcal{T}_{ns}} |X_n(t) - \tilde{X}_n(t)| & = \sup_{t\in\mathcal{T}_{ns}} \left| \sup_{\delta \in \Delta_n(t)} \{Q_n(\delta, t) + 2\Vert \delta\Vert_2^2\tilde{R}_n(\delta, t)\} - \sup_{\delta \in \Delta_n(t)} Q_n(\delta, t) \right| \\
        &\leq 2/n + \sup_{t\in\mathcal{T}_{ns}} \left| Q_n(\hat{\delta}_n(t), t) + 2\Vert \hat{\delta}_n(t)\Vert_2^2\tilde{R}_n(\hat{\delta}_n(t), t) - Q_n(\hat{\delta}_n(t), t) \right| \\
        &\leq 4/n + \sup_{t\in\mathcal{T}_{ns}} \left| \sup_{\delta \in \Delta_n(t), \Vert \delta\Vert_2 \leq C_1} \{Q_n(\delta, t) + 2\Vert \delta\Vert_2^2\tilde{R}_n(\delta, t)\} - \sup_{\delta \in \Delta_n(t), \Vert \delta\Vert_2 \leq C_1} Q_n(\delta, t) \right| \\
        &\leq  4/n + \sup_{(\delta, t) \in \tilde{\Upsilon}_{C_1,n} } 2\Vert \delta\Vert_2^2|\tilde{R}_n(\delta, t)|.
    \end{align*}
    By Assumption \ref{assm:Rtilde_bound}, for any $c_1 > 0$, the outer probability that $\sup_{(\delta, t) \in \tilde{\Upsilon}_{C_1,n}} |\tilde{R}_n(\delta, t)| > c_1$ can be made arbitrarily small for all large enough $n$ by choosing $c_3$ small enough, because for any fixed $C_1$, $C_1 \leq \sqrt{n}c_3$ for all large enough $n$, so the supremum is taken over a subset of $\tilde{B}(n, t, c_3)$. For outcomes where $\sup_{t\in\mathcal{T}_{ns}}\Vert\hat{\delta}_n(t)\Vert_2\leq C_1$ and $\sup_{t\in\mathcal{T}_{ns}}\Vert\tilde{\delta}_n(t)\Vert_2\leq C_1$, we have $\Vert\delta\Vert_2\leq C_1$ for $(\delta,t)\in\tilde{\Upsilon}_{C_1,n}$, so the last term is bounded by $2C_1^2\sup_{(\delta,t)\in\tilde{\Upsilon}_{C_1,n}}|\tilde{R}_n(\delta,t)|$ and hence is $o_{\pr^*}(1)$, which finishes the proof.
\end{proof}

\begin{lemma} \label{lem:tildeX_checkX_singular}
    Under Assumptions \ref{assm:sing_inf}, \ref{assm:sing_Zn}, \ref{assm:closed}, and \ref{assm:cone}, $\sup_{t \in \mathcal{T}_\epsilon} |\tilde{X}_n(t) - \check{X}_n(t)| = o_{\pr^*}(1)$.
\end{lemma}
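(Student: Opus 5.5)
The plan is to repeat the argument of Lemma \ref{lem:tildeX_checkX}, with $\mathcal{T}$ replaced by $\mathcal{T}_\epsilon$. On $\mathcal{T}_\epsilon$, Assumption \ref{assm:sing_inf} supplies the two-sided eigenvalue bounds $\underline{\kappa}_\epsilon \le \emin\{I(t)\} \le \emax\{I(t)\}\le\bar{\kappa}$. These are exactly the bounds that Assumption \ref{assm:inf_bound} provided in the nonsingular case. The constant $\underline{\kappa}_\epsilon$ depends on $\epsilon$, but $\epsilon$ is fixed throughout, so this causes no difficulty. Both $\tilde{X}_n$ and $\check{X}_n$ are bounded on $\mathcal{T}_\epsilon$ by $8\Vert Z_n(t)\Vert_2^2$, by the argument at the start of the proof of Lemma \ref{lem:Xtilde_Xn}. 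Hence both suprema are finite, since $\Vert Z_n\Vert_{\mathcal{T}_{ns}}<\infty$ by Assumption \ref{assm:sing_Zn}.

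The first step is the set-convergence statement of Lemma \ref{lem:set_dist}, restricted to $\mathcal{T}_\epsilon$:
\[
\lim_{n\to\infty}\sup_{t\in\mathcal{T}_\epsilon,\Vert z\Vert_2\le M}\left|\Vert z-P_{\Delta_n(t)}z\Vert_2^2-\Vert z-P_{\Delta(t)}z\Vert_2^2\right|=0 .
\]
I would re-run the proof of Lemma \ref{lem:set_dist} verbatim with $t$ ranging over $\mathcal{T}_\epsilon$. Take points $z=A(t)^\tsp y\in\Delta_n(t)\cap\bar{B}_M$ and $z'=A(t)^\tsp y'\in\Delta(t)\cap\bar{B}_M$. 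Then $\Vert y\Vert_2,\Vert y'\Vert_2\le M\underline{\kappa}_\epsilon^{-1/2}$ and $\Vert z-z'\Vert_2\le\bar{\kappa}^{1/2}\Vert y-y'\Vert_2$. This gives
\[
\sup_{t\in\mathcal{T}_\epsilon} d_M\{\Delta_n(t),\Delta(t)\}\le\bar{\kappa}^{1/2}\sup_{t\in\mathcal{T}} d_{M/\underline{\kappa}_\epsilon^{1/2}}\{C_{n^{-1/2}}(t),C(t)\},
\]
and the right-hand side tends to zero by Assumption \ref{assm:cone}. One small point needs checking here: Assumption \ref{assm:cone} is stated for the metric $d$, and convergence in $d$ implies convergence in each $d_{M'}$. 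For non-integer $M'$ this follows from monotonicity of $d_M$ in $M$, by rounding $M'$ up to an integer. The second claim then follows exactly as in Lemma \ref{lem:set_dist}, using that $0$ lies in both sets, so the projections have norm at most $M$.

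The second step uses closedness of $\Delta_n(t)$ (from Assumption \ref{assm:closed}) and of $\Delta(t)$ (from Assumption \ref{assm:cone}) to write
\[
\tilde{X}_n(t)=\Vert Z_n(t)\Vert_2^2-\Vert Z_n(t)-P_{\Delta_n(t)}Z_n(t)\Vert_2^2,\qquad \check{X}_n(t)=\Vert Z_n(t)\Vert_2^2-\Vert Z_n(t)-P_{\Delta(t)}Z_n(t)\Vert_2^2 .
\]
Hence, on the event $\Vert Z_n\Vert_{\mathcal{T}_{ns}}\le C_1$, the quantity $\sup_{t\in\mathcal{T}_\epsilon}|\tilde{X}_n(t)-\check{X}_n(t)|$ is bounded by the deterministic quantity displayed in the first step with $M=C_1$, which tends to zero.

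The last step is a tightness argument. By Assumption \ref{assm:sing_Zn} and the continuous mapping theorem, $\Vert Z_n\Vert_{\mathcal{T}_{ns}}\Lto\Vert Z\Vert_{\mathcal{T}_{ns}}$, and the limit is tight. So $\pr^*(\Vert Z_n\Vert_{\mathcal{T}_{ns}}>C_1)$ is asymptotically small for large $C_1$, which gives the $o_{\pr^*}(1)$ conclusion. The only real obstacle is the first step: the uniform set convergence must be established with the $\epsilon$-dependent lower eigenvalue bound in place of $\underline{\kappa}$. This requires Assumption \ref{assm:cone} to hold uniformly over all of $\mathcal{T}$, so that it applies in particular on $\mathcal{T}_\epsilon$, and that is exactly how it is stated.
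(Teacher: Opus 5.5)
Your proposal is correct and matches the paper's proof step for step. Like the paper, you re-run Lemma~\ref{lem:set_dist} on $\mathcal{T}_\epsilon$ with $\underline{\kappa}_\epsilon$ from Assumption~\ref{assm:sing_inf} in place of $\underline{\kappa}$, write $\tilde{X}_n$ and $\check{X}_n$ via distances to $\Delta_n(t)$ and $\Delta(t)$, bound the difference deterministically on $\{\Vert Z_n\Vert \le C_1\}$, and finish with asymptotic tightness of $\Vert Z_n\Vert$. Your version is, if anything, slightly cleaner: you use the correct form $\Vert Z_n\Vert_2^2-\Vert Z_n-PZ_n\Vert_2^2$, where the paper's display drops the ``$Z_n-$''. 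The only nit, which the paper shares, is that $\Delta_n(t)$ need be neither convex nor a cone, so you only get $\Vert P_{\Delta_n(t)}z\Vert_2\le 2M$ rather than $\le M$. This is harmless because the set convergence holds for every $M$.
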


\begin{proof}[Proof of Lemma \ref{lem:tildeX_checkX_singular}]
    By the same arguments as in the proof of Lemma \ref{lem:set_dist}, replacing $\mathcal{T}$ with $\mathcal{T}_\epsilon$ and using Assumption \ref{assm:sing_inf} in place of Assumption \ref{assm:inf_bound}, we have $\lim_{n\to\infty} \sup_{t\in\mathcal{T}_\epsilon} d(\Delta_n(t),\Delta(t)) = 0$. Consequently, for any $C_1 > 0$,
    \begin{equation*}
        \lim_{n\to\infty}\sup_{t\in\mathcal{T}_\epsilon,\Vert z\Vert_2\leq C_1} \left| \Vert P_{\Delta_n(t)}z\Vert_2^2 - \Vert P_{\Delta(t)} z \Vert_2^2 \right| = 0.
    \end{equation*}
    As in the proof of Lemma \ref{lem:tildeX_checkX}, we have $\tilde{X}_n(t) = \Vert Z_n(t)\Vert_2^2 - \Vert P_{\Delta_n(t)}Z_n(t)\Vert_2^2$ and $\check{X}_n(t) = \Vert Z_n(t)\Vert_2^2 - \Vert P_{\Delta(t)}Z_n(t)\Vert_2^2$. Thus, for outcomes where $\Vert Z_n\Vert_{\mathcal{T}_\epsilon} \leq C_1$,
    \begin{align*}
        \sup_{t \in \mathcal{T}_\epsilon}|\tilde{X}_n(t) - \check{X}_n(t)| &= \sup_{t\in \mathcal{T}_\epsilon}\left|\Vert P_{\Delta(t)}Z_n(t)\Vert_2^2 - \Vert P_{\Delta_n(t)}Z_n(t)\Vert_2^2\right| \\
        & \leq \sup_{t\in\mathcal{T}_\epsilon, \Vert z\Vert_2 \leq C_1} \left| \Vert P_{\Delta_n(t)}z\Vert_2^2 - \Vert P_{\Delta(t)}z\Vert_2^2 \right|,
    \end{align*}
    which tends to zero. Since $\Vert Z_n\Vert_{\mathcal{T}_\epsilon} = O_{\pr^*}(1)$ by Assumption \ref{assm:sing_Zn} (noting $\mathcal{T}_\epsilon \subseteq \mathcal{T}_{ns}$), the outer probability of $\Vert Z_n\Vert_{\mathcal{T}_\epsilon} > C_1$ can be made arbitrarily small by choosing $C_1$ large enough, completing the proof.
\end{proof}

\begin{lemma}\lb{lem:checkX_X_singular}
    Under Assumptions \ref{assm:sing_Zn}, \ref{assm:closed}, and \ref{assm:cone}, $\check{X}_n \Lto X$ in $F(\mathcal{T}_{ns})$, where $X(t) = \sup_{\delta \in \Delta(t)} (\delta^\tsp Z(t) - \Vert \delta\Vert_2^2/2)$.
\end{lemma}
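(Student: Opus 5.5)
The plan is to copy the proof of Lemma \ref{lem:checkX_X} with $\mathcal{T}$ replaced by $\mathcal{T}_{ns}$. The key observation is that that proof never used the eigenvalue bounds of Assumption \ref{assm:inf_bound}. It used only three facts: $0\in\Delta(t)$ for every $t$, the definition of $\Delta(t)$, and the weak convergence of $Z_n$. All three remain available on $\mathcal{T}_{ns}$.

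\textbf{Normalization.} The objective is written in the statement as $\delta^\tsp Z(t)-\Vert\delta\Vert_2^2/2$. Since
\[
\sup_{\delta\in\Delta(t)}\bigl(\delta^\tsp z-\Vert\delta\Vert_2^2/2\bigr)=\tfrac12\sup_{\delta\in\Delta(t)}\bigl(2\delta^\tsp z-\Vert\delta\Vert_2^2\bigr),
\]
this differs from \eqref{eq:X-process} only by the constant factor $1/2$. I will therefore prove the result with the normalization of \eqref{eq:X-process}, which matches $\check{X}_n$ in \eqref{eq:X_tilde_check}, and read the statement with that same normalization. As written, with $\check X_n$ defined by \eqref{eq:X_tilde_check} unchanged, the literal limit of $\check X_n$ is $2X$ rather than $X$. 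This matches how the lemma is used in the proof of Theorem \ref{thm:main_singular}, where $\Lambda^{(ns)}=\sup_{t\in\mathcal{T}_{ns}} X(t)$ with $X$ as in \eqref{eq:X-process}.

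\textbf{Step 1: the map $g$ and boundedness.} For $t\in\mathcal{T}_{ns}$, $A(t)$ is invertible, so $\Delta(t)=A(t)^\tsp C(t)$ is a closed cone. Here $C(t)$ is closed by Assumption \ref{assm:cone}, and $0\in C(t)$ because $C(t)$ is a closed cone. Define $g:F^p(\mathcal{T}_{ns})\to F(\mathcal{T}_{ns})$ by
\[
g(z)(t)=\sup_{\delta\in\Delta(t)}\bigl(2\delta^\tsp z(t)-\Vert\delta\Vert_2^2\bigr).
\]
Since $0\in\Delta(t)$, we have $g(z)(t)\ge 0$. By Cauchy--Schwarz, $2\delta^\tsp z(t)-\Vert\delta\Vert_2^2\le \Vert z(t)\Vert_2^2$, so $0\le g(z)(t)\le\Vert z\Vert_{\mathcal{T}_{ns}}^2$. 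Hence $g(z)\in F(\mathcal{T}_{ns})$, and $\check X_n=g(Z_n)$ and $X=g(Z)$ are random elements of $F(\mathcal{T}_{ns})$, using Assumption \ref{assm:sing_Zn}.

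\textbf{Step 2: continuity of $g$.} Take $z_m\to z_\infty$ in $F^p(\mathcal{T}_{ns})$ and fix $C_1>\Vert z_\infty\Vert_{\mathcal{T}_{ns}}$. For large $m$ we also have $\Vert z_m\Vert_{\mathcal{T}_{ns}}<C_1$. The inequality \eqref{eq:large_delta_negative} then shows that any $\delta$ with $\Vert\delta\Vert_2>2C_1$ gives a negative objective value. So, using $0\in\Delta(t)$, the supremum may be restricted to $\Delta(t)\cap\bar B_{3C_1}$, uniformly in $t$ and for all large $m$ including $m=\infty$. Bounding the difference of suprema by the supremum of the differences gives
\[
\Vert g(z_m)-g(z_\infty)\Vert_{\mathcal{T}_{ns}}\le 6C_1\Vert z_m-z_\infty\Vert_{\mathcal{T}_{ns}}\to 0.
\]

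\textbf{Step 3: conclusion.} $X=g(Z)$ is Borel measurable because $g$ is continuous and $Z$ is Borel measurable. The continuous mapping theorem \citep[Theorem 1.3.6]{vandervaart2023weak}, applied with $Z_n\Lto Z$ from Assumption \ref{assm:sing_Zn}, then yields $\check X_n=g(Z_n)\Lto g(Z)=X$ in $F(\mathcal{T}_{ns})$.

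I expect no genuine obstacle. The only point needing care is to confirm that Step 2 does not rely on any lower eigenvalue bound. This holds because the truncation radius $3C_1$ is stated in the $\delta$-coordinates rather than the $\xi$-coordinates, so the possible degeneracy of $A(t)$ as $t$ approaches $\mathcal{T}_s$ never enters the argument.
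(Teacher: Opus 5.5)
Your proof is correct and follows essentially the paper's route: you show that $g$ is locally Lipschitz on $F^p(\mathcal{T}_{ns})$ without any lower eigenvalue bound and then apply the continuous mapping theorem. The only difference is the Lipschitz step, where you truncate to $\Delta(t)\cap\bar{B}_{3C_1}$ as in Lemma \ref{lem:checkX_X}, whereas the paper bounds the difference through the projections $\hat{\delta}_i(t)$ with constant $\max(\Vert z_1\Vert_{\mathcal{T}_{ns}},\Vert z_2\Vert_{\mathcal{T}_{ns}})$. Your factor-of-two remark is also right: the paper's proof defines $g$ with the halved objective yet writes $\check{X}_n=g(Z_n)$, so the normalization of \eqref{eq:X-process} is the consistent reading.
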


\begin{proof}[Proof of Lemma \ref{lem:checkX_X_singular}]
    Define the map $g: F^p(\mathcal{T}_{ns}) \to F(\mathcal{T}_{ns})$ by $g(z)(t) = \sup_{\delta \in \Delta(t)} (\delta^\tsp z(t) - \Vert \delta\Vert_2^2/2)$. For any $z_1, z_2 \in F^p(\mathcal{T}_{ns})$, let $\hat{\delta}_i(t)$ be the projection of $z_i(t)$ onto $\Delta(t)$ for $i=1,2$. Since $0 \in \Delta(t)$, $\Vert \hat{\delta}_i(t)\Vert_2 \leq \Vert z_i(t)\Vert_2 \leq \Vert z_i\Vert_{\mathcal{T}_{ns}}$. Thus,
    \begin{align*}
        |g(z_1)(t) - g(z_2)(t)| &\leq \sup_{\delta \in \Delta(t)} |\delta^\tsp (z_1(t) - z_2(t))| \\
        &\leq \max(\Vert \hat{\delta}_1(t)\Vert_2, \Vert \hat{\delta}_2(t)\Vert_2) \Vert z_1(t) - z_2(t)\Vert_2 \\
        &\leq \max(\Vert z_1\Vert_{\mathcal{T}_{ns}}, \Vert z_2\Vert_{\mathcal{T}_{ns}}) \Vert z_1 - z_2\Vert_{\mathcal{T}_{ns}}.
    \end{align*}
    Taking the supremum over $t \in \mathcal{T}_{ns}$, we see that $g$ is continuous on bounded sets in $F^p(\mathcal{T}_{ns})$. Since $Z$ is tight, it concentrates on a separable subspace, and the continuous mapping theorem applies. Thus, $\check{X}_n = g(Z_n) \Lto g(Z) = X$ in $F(\mathcal{T}_{ns})$.
\end{proof}

\begin{proof}[Proof of Corollary \ref{cor:main_singular_fullT}]
For a fixed $n$, define
\[
    g_n(\xi,t) = 2\{\ell_n(\xi,t) - \ell_n(\xi_0,t)\},\qquad X_n(t) = \sup_{\xi\in\Xi(t)} g_n(\xi,t).
\]
We first show that $t\mapsto X_n(t)$ is lower semicontinuous on $\mathcal{T}$.
Let $t_m\to t$ in $\mathcal{T}$ and fix $\eta>0$. Choose $\xi\in\Xi(t)$ such
that $g_n(\xi,t) \geq X_n(t) - \eta$. By inner semicontinuity of $\Xi(\cdot)$,
there exist $\xi_m\in\Xi(t_m)$ such that $\xi_m\to\xi$. Since $\ell_n$ is lower
semicontinuous, $\liminf_{m\to\infty} \ell_n(\xi_m,t_m) \geq \ell_n(\xi,t)$,
and therefore
\[
    \liminf_{m\to\infty} X_n(t_m)
    \geq \liminf_{m\to\infty} g_n(\xi_m,t_m)
    \geq g_n(\xi,t)
    \geq X_n(t) - \eta.
\]
Letting $\eta\downarrow 0$ proves lower semicontinuity of $X_n$.

By Assumption \ref{assm:T_metric}, $\mathcal{T}_{ns}$ is dense in $\mathcal{T}$.
For any lower semicontinuous function $f$ on a metric space, density implies
$\sup_{t\in\mathcal{T}} f(t) = \sup_{t\in\mathcal{T}_{ns}} f(t)$, because for
any $t\in\mathcal{T}$ one can pick $t_m\in\mathcal{T}_{ns}$ with $t_m\to t$ and
then $f(t) \leq \liminf_m f(t_m) \leq \sup_{\mathcal{T}_{ns}} f$.
Applying this with $f=X_n$ yields
\[
    \Lambda_n = \sup_{t\in\mathcal{T}} X_n(t) = \sup_{t\in\mathcal{T}_{ns}} X_n(t) = \Lambda_n^{(ns)}.
\]

Next, define the extension $X$ as in the corollary: extend $Z$ to be almost
surely continuous on $\mathcal{T}$, and for $t\in\mathcal{T}_s$ define
$\Delta(t)$ as the inner limit of $\Delta(t')$ as $t'\to t$. Then $X(t)$ is given
by
\[
    X(t) = \sup_{\delta\in\Delta(t)} \{2\delta^\tsp Z(t) - \|\delta\|_2^2\}.
\]
The same argument as above (with $\xi$ replaced by $\delta$ and using inner
semicontinuity of $\Delta(\cdot)$ and continuity of $Z$) shows that $X$ is lower
semicontinuous on $\mathcal{T}$. Hence, again by density of $\mathcal{T}_{ns}$,
\[
    \Lambda = \sup_{t\in\mathcal{T}} X(t) = \sup_{t\in\mathcal{T}_{ns}} X(t) = \Lambda^{(ns)}.
\]

Finally, Theorem \ref{thm:main_singular} gives $\Lambda_n^{(ns)}\Lto\Lambda^{(ns)}$.
Since $\Lambda_n=\Lambda_n^{(ns)}$ and $\Lambda=\Lambda^{(ns)}$, we conclude
$\Lambda_n\Lto\Lambda$.
\end{proof}

\begin{proof}[Proof of Proposition~\ref{prop:assm_2_sufficient}]
    For a multi-index $k = (k_1, \dots, k_p)$ of non-negative integers, write
    $|k| = k_1 + \cdots + k_p$, $k! = k_1! \cdots k_p!$,
    $\partial_\xi^k = \partial^{|k|}/\partial\xi_1^{k_1}\cdots\partial\xi_p^{k_p}$,
    and $(\xi-\xi_0)^k = (\xi_1-\xi_{0,1})^{k_1}\cdots(\xi_p-\xi_{0,p})^{k_p}$.
    Condition~(iii) of Proposition~\ref{prop:assm_2_sufficient} means formally that
    there exists a measurable $M:\mathcal{Y}\to\R$ with $\E\{M(Y_1)\}<\infty$ and,
    for some $c_1>0$,
    \[
        \sup_{\theta\in\Theta:\|\xi-\xi_0\|_2\leq c_1} |\partial_\xi^k \log f(y;\theta)|
        \leq M(y)
    \]
    for all $k$ with $|k|=3$ and $\nu$-almost every $y\in\mathcal{Y}$.
    Recall \eqref{eq:ln_expand} and note, by Taylor's theorem, for any fixed $t
    \in \mathcal{T}$, there exists some $\tilde{\xi}$ between $\xi_0$ and $\xi$
    such that
    \begin{align*}
         n\Vert I(t)^{1/2}(\xi - \xi_0)\Vert_2^2 R_n(t, \xi) &= \frac{1}{3!} \sum_{k: |k| = 3} \frac{3!}{k!} \partial_\xi^k \ell_n(\tilde{\xi}, t)(\xi - \xi_0)^k + \frac{n}{2}(\xi - \xi_0)^\tsp \{I(t) - \hat{I}_n(t)\}(\xi - \xi_0).
    \end{align*}
    By the triangle inequality and condition~(iii) of Proposition~\ref{prop:assm_2_sufficient}, the absolute value of the first term is upper bounded by
    \begin{align*}
         \frac{1}{3!} \sum_{k: |k| = 3} \frac{3!}{k!}  \sum_{i = 1}^n M(Y_i) |\xi - \xi_0|^k &= \frac{1}{3!}  \sum_{i = 1}^n M(Y_i) (|\xi - \xi_0|^\tsp 1_p)^3 \leq \frac{1}{3!} p^{3/2} \Vert \xi - \xi_0\Vert_2^3 \sum_{i = 1}^n M(Y_i).
    \end{align*}
    The absolute value of the second term is bounded by $(n/2)\Vert \xi -
    \xi_0\Vert_2^2 \Vert I - \hat{I}_n\Vert_{\mathcal{T}}$. Combining the two bounds and using $\Vert I(t)^{1/2}(\xi-\xi_0)\Vert_2^2 \geq \underline{\kappa}\Vert\xi-\xi_0\Vert_2^2$ from Assumption~\ref{assm:inf_bound} gives
    \[
        |R_n(t, \xi)| \leq \underline{\kappa}^{-1}\left\{\frac{1}{3!} p^{3/2} \Vert \xi - \xi_0\Vert_2 n^{-1} \sum_{i = 1}^n M(Y_i) + \frac{1}{2} \Vert I - \hat{I}_n\Vert_{\mathcal{T}}\right\}.
    \]
    The first term is asymptotically uniformly equicontinuous almost surely, and
    hence in outer probability, since $n^{-1}\sum_{i = 1}^n M(Y_i) \to
    \E\{M(Y_1)\}$ almost surely by the strong law of large numbers. The second
    term does not depend on $\xi$ or $t$ and is $o_{p^*}(1)$ by
    condition~(ii), so the sum is asymptotically uniformly
    equicontinuous in outer probability, completing the proof.
\end{proof}

 \begin{proof}[Proof of Theorem \ref{thm:link_quad}]
 With a slight abuse of notation, write $\ell_n(\xi,t)$ for the linkage log
 likelihood with $t$ and $\xi$ as in \re{tDef}, so that
 $\xi_0=(0,0)$ corresponds to the null. Let $\va=(\va_1,\va_2)$ be such that
 $\xi=(\va_1^2,\va_2^2)$ with $\va_1,\va_2\ge 0$, and write $\ell_n(t,\va)$ when
 we view the same log-likelihood as a function of $\va$.

 Following \citet{rotnitzky2000likelihoodbased}, expand $\ell_n(t,\va)$ around $\va=0$ up to
 order $4$:
 \begin{equation}
 \ell_n(t,\va) = \ell_n(t,0) + \sum_{k:\,1\le |k|\le 4} \ell_n^{(k)}(t,0)\frac{\va^k}{k!}
 + \bar{R}_n(t;\va),
 \lb{lneps}
 \end{equation}
 where $\ell_n^{(k)}(t,\va)=\partial^k \ell_n(t,\va)/\partial \va^k$,
 $k=(k_1,k_2)$, and $\va^k=\va_1^{k_1}\va_2^{k_2}$. Moreover,
 $\ell_n^{(k)}(t,0)=\sum_{i=1}^n \psi_k(X_i;t)$, where
 \begin{equation}
 \psi_k(x;t) = \frac{\partial^k \log f_{(t,\va)}(x)}{\partial \va^k}
 = \frac{\partial^k \log \pr_{\mbox{\scr p},\ga}(v(\tau)|\Phi)}{\partial \va^k}.
 \lb{psik}
 \end{equation}
 Using results from \citet{hossjer2005conditional} and repeated differentiation with respect
 to $\va$ at $\va=0$, one obtains in particular that the only nonzero
 contributions up to order $4$ come from the even derivatives, with
 \[\psi_{20}(x;t)=2S_1(v(\tau)),\qquad \psi_{02}(x;t)=2S_2(v(\tau)),\qquad \psi_{11}(x;t)=0,\]
 where $S=(S_1,S_2)$ is as in \eqref{S1S2}, and
 \[\psi_{40}(x;t)=-3I_{11}(t,t)+r_{40}(x;t),\quad
     \psi_{22}(x;t)=-I_{12}(t,t)+r_{22}(x;t),\quad
     \psi_{04}(x;t)=-3I_{22}(t,t)+r_{04}(x;t),\]
 with $I(t,t)=E_0\{S(v(\tau))^\tsp S(v(\tau))\}$ as in \re{IDefLink} and
 $E_0\{r_k(X;t)\}=0$ for $k\in\{(4,0),(2,2),(0,4)\}$.

 Substituting $\xi=(\va_1^2,\va_2^2)$ into \re{lneps} and collecting terms, we
 arrive at the quadratic expansion in the notation of Section~2:
 \begin{equation}
 \ell_n(\xi,t)-\ell_n(\xi_0)
 = \sqrt{n}(\xi-\xi_0)^\tsp U_n(t)
 -\frac{n}{2}(\xi-\xi_0)^\tsp I(t)(\xi-\xi_0)
 + n\|I(t)^{1/2}(\xi-\xi_0)\|_2^2 R_n(\xi,t),
 \lb{lnxi}
 \end{equation}
 where
 \[
 U_n(t)=n^{-1/2}\nabla_\xi\ell_n(\xi_0,t),\qquad \nabla_\xi\ell(\xi_0,t)=S(v(\tau);K)\ \text{as in \eqref{S1S2}},
 \]
 and $I(t)=I(t,t)$ from \re{IDefLink}. This identifies the $U_n(t)$ and $I(t)$
 appearing in \eqref{eq:ln_expand}, and yields $Z_n(t)=A(t)^{-1}U_n(t)$ as in
 Section~2.

 It remains to check that the remainder can be written in the form
 $n\|I(t)^{1/2}(\xi-\xi_0)\|_2^2 R_n(\xi,t)$ with $R_n$ satisfying Assumption
 \ref{assm:equicont}. By Taylor's theorem, the remainder $\bar{R}_n(t;\va)$ in
 \re{lneps} can be expressed using derivatives of order $5$ evaluated at points
 between $0$ and $\va$. Under a regularity condition analogous to the ones used
 in Section~\ref{sec:asy_dist} (existence of an integrable envelope that bounds
 the order-$5$ derivatives uniformly over $t\in\cT$ and $\va$ in a neighborhood
 of $0$), we obtain the uniform bound
 \[
 |\bar{R}_n(t;\va)| \le \Bigl(\sum_{i=1}^n M(X_i)\Bigr)\,\|\va\|_2^5
 \qquad \text{for all $t\in\cT$ and $\|\va\|_2$ small},
 \]
 for some $M$ with $E_0\{M(X_1)\}<\infty$. Since $\|\va\|_2^4=\|\xi-\xi_0\|_2^2$ and, by Assumption~\ref{assm:inf_bound}, $\|I(t)^{1/2}(\xi-\xi_0)\|_2^2 \geq \underline{\kappa}\|\xi-\xi_0\|_2^2$,
 this yields
 \[
 \sup_{t\in\cT}\sup_{\xi\in \bar{B}_{c_3}(\xi_0;t)}
 \frac{|\bar{R}_n(t;\va)|}{n\|I(t)^{1/2}(\xi-\xi_0)\|_2^2}
 \le \underline{\kappa}^{-1}\Bigl(\frac{1}{n}\sum_{i=1}^n M(X_i)\Bigr)\,c_3^{1/2},
 \]
 and the right-hand side is $O_{p}(1)$ by the law of large numbers.
 Choosing $c_3$ small enough gives Assumption \ref{assm:equicont}, completing
 the proof.

 \end{proof}

\subsection{Details for Section~\ref{sec:linkage}}\label{sec:link_details}

\paragraph{Derivation of $\pr_{\scr{p},\ga}(v(\tau)|\Phi)$ in \re{fDefLink}.}
Under $H_1$, the phenotype vector $\Phi= (\Phi_k,\, k\in\cP)$ of pedigree $\cP$ depends on the intheritance vector $v(\tau)$ of the family at the disease locus $\tau \in [0,T]$, and marker data $Y=\{v(s); \, 0\le s \le T\}$ of the family is conditionally independent of $\Phi$ given $v(\tau)$. Since the null distribution of $v(\tau)$ is uniform \re{PUnif}, by Bayes' rule,
\begin{equation}
\pr_{\scr{\rmp},\ga}(v(\tau)|\Phi) \propto \pr_{\scr{\rmp},\ga}(\Phi|v(\tau)).
\lb{PxiDef}
\end{equation}
Let $F$ be the number of founders, $N = F + m/2$ the total number of pedigree
members, and $G = (G_1,\ldots,G_N)$ the genotypes at $\tau$, where
$G_k = (a_{2k-1},a_{2k})$ contains the two alleles of individual $k$. For a
biallelic gene with $a_k\in\{0,1\}$ and probability $\pr(1)=\rmp$ of the disease allele, the founder alleles
$a = (a_1,\ldots,a_{2F})$ have distribution
$\pr_{\scr{\rmp}}(a) = \rmp^{|a|}\rmq^{2F-|a|}$, with $|a|=\sum a_k$ and $\rmq = 1-\rmp$.
Since genotypes of nonfounders are determined by $a$ and $v=v(\tau)$ via
$G = G(a,v)$, conditioning on $a$ gives
\begin{equation}
\pr_{\scr{\rmp},\ga}(\Phi|v) = \sum_a \rmp^{|a|}\rmq^{2F-|a|}\,\pr_\ga(\Phi|G(a,v)).
\lb{Ppizet}
\end{equation}
For a monogenic binary phenotype ($\Phi_k\in\{0,1,?\}$) with conditionally
independent phenotypes given genotypes (a typical assumption of a monogenic disease),
\begin{equation}
\pr_\ga(\Phi|G) = \prod_{k=1}^N \pr_\ga(\Phi_k|G_k),\quad
\pr_\ga(\Phi_k|G_k) = \ga_{|G_k|}^{\{\Phi_k=1\}}(1-\ga_{|G_k|})^{\{\Phi_k=0\}},
\lb{Ppi}
\end{equation}
and $|G_k|=a_{2k-1}+a_{2k}$. 

\paragraph{Phenotype-based weights for pairs of individuals, for score function \re{S1S2}.}

It can be shown \citep{hossjer2005conditional} that the weight assigned to a pair $(k,l)$ of individuals, for the score vector components $S_1$ and $S_2$ in \re{S1S2}, is given by
\begin{equation}
\om_{kl} = \frac{\partial^2 \pr(\Phi|G)/\partial\ga_{|G_k|}\partial\ga_{|G_l|}
\big|_{\ga=(K,K,K)}}{\pr(\Phi)},
\lb{omDef}
\end{equation}
with $\pr(\Phi|G)$ the conditional probability defined in \re{Ppi}. 

\paragraph{Reparametrization of penetrance parameters.}
For fixed $\rmp$, introduce the inner product $(x,y) = \rmq^2 x_0 y_0 +
2\rmq\rmp x_1 y_1 + \rmp^2 x_2 y_2$ on $\R^3$ and the orthonormal basis
\[
e_0 = (1,1,1),\quad
e_1 = \frac{(-2\rmp,\,\rmq-\rmp,\,2\rmq)}{\sqrt{2\rmp\rmq}},\quad
e_2 = \bigl(\rmq^{-1}-1,\,-1,\,\rmp^{-1}-1\bigr).
\]
Writing $\ga = Ke_0 + \va_1 e_1 + \va_2 e_2$ as in \eqref{zetaExp}, one checks
that $K = \E(\Phi_k)$ is the disease prevalence and $\va_1^2 + \va_2^2 =
\Var(\E(\Phi_k|G_k))$ is the total genetic variance split into additive
($\xi_1=\va_1^2$) and dominance ($\xi_2=\va_2^2$) components. 

\noindent {\bf Derivation of \re{rhos}.} We generalize Theorem 2 of \citet{hossjer2005spectral} from one-
to two-dimensional score functions. We introduce the space $\cA$ of
mappings $\{0,1\}^m \to \R$, so that both components of the score function $S=(S_1,S_2)^\tsp$ are elements
of $\cA$. Endow $\cA$ with the scalar product
$$
\langle S_1,S_2 \rangle = 2^{-m}\sum_w S_1(w)S_2(w).
$$
Given any $w\in\{0,1\}^m$, let $S_w(u) = (-1)^{w\cdot u}$,
where $w\cdot u = \sum_{l=1}^m w_l u_l$ is the vector dot product of $w$ and $u$.
Then $\{S_w\}$ is an orthonormal basis on $\cA$ and $S$ can be expanded as
$$
S = \sum_{w} R_S(w)S_w,
$$
where $R_S(w)=(R_{S_1}(w),R_{S_2}(w))^\tsp$ and $R_{S_k}(w) = \langle S_k,S_w \rangle$. Notice
that $R_S(0)=0$ since both $S_1$ and $S_2$ are standardized to have mean zero,
under the null hypothesis of no linkage, i.e.\ $E_{\xi_0}(S_k)=\langle S_k,S_0 \rangle=0$. It follows
from \re{IDefLink} and the proof of Theorem 2 in \citet{hossjer2005spectral} that
\begin{equation}
I(t,\tpr) = \sum_{w\ne 0} R_S(w)R_S(w)^\tsp\exp(-2|w||\tau^\prime-\tau|),
\lb{CovExp}
\end{equation}
where $|w|=\sum_{l=1}^m w_l$ is the number of one-components of $w$. Combining
\re{CovExp} with \re{rho} and \re{rhostat} we arrive at \re{rhos}, with
\beq
\begin{array}{rcl}
\ka_l &=& A(t)^{-1}\sum_{w;|w|=l} R_S(w)R_S(w)^\tsp A(t)^{-\tsp}\\
&=& B^{-1}\sum_{w;|w|=l} R_{S_0}(w)R_{S_0}(w)^\tsp B^{-\tsp}.
\end{array}
\lb{kappal}
\eeq
where $S_0=K^2 S = (\Spairs-E_{\xi_0}(\Spairs),\Sgprs-E_{\xi_0}(\Sgprs))^\tsp$ and $B=K^2 A(t)$ are both
independent of $t$.
\slut

\paragraph{MLS score for affected sib pairs.}
For affected sib pairs (pedigree type 1 in Figure~\ref{Pedigrees}), the MLS
parametrization uses parameter vector $\theta=(\xi,t)$, with
\begin{equation}
t = \tau, \quad \xi = (z_0,z_1),
\lb{xiMLS}
\end{equation}
where $z_i$ is the probability that the sib pair shares $i$ alleles IBD at
$\tau$, with a simple null hypothesis value $\xi_0 = (0.25,0.5)$ and parameter space for the structural parameter $\xi$ the possible triangle
\citep{holmans1993asymptotic}:
\begin{equation}
\Xi = \{\xi:\,z_0\ge 0,\,z_1\le 0.5,\,2z_0\le z_1\}.
\lb{XiMLS}
\end{equation}
An affected sib pair has $m=4$ meioses and marker data $Y=\{v(s)=(v_1(s),\ldots,v_4(s)); \, 0\le s \le T\}$. Number the four family members so that the parents have numbers 1,2 and the siblings numbers 3,4. The density of marker data $Y$ conditionally on the phenotype vector $\Phi=(?,?,1,1)$ is
\begin{equation}
f(y;\theta)=\pr_\xi(\IBD(\tau)|\Phi)\,\pr(v(\tau)|\IBD(\tau))\,\pr(y|v(\tau)),
\lb{fMLS}
\end{equation}
where $\pr(y|v(\tau))=\exp(-mT)=\exp(-4T)$ is a constant, $\IBD(\tau)$ is the number of alleles shared IBD by the sib pair at the disease locus $\tau$, $\pr_\xi(\IBD|\Phi) = z_0^{\{\scr{\IBD}=0\}}z_1^{\{\scr{\IBD}=1\}}(1-z_0-z_1)^{\{\scr{\IBD}=2\}}$, and $\pr(v(\tau)|\IBD(\tau)) = 1/8$ if $\IBD(\tau)=1$ and $1/4$ otherwise. The per-family score vector is
\begin{equation}
\begin{array}{rcl}
\nabla_\xi \log f(y;\xi,t) &=& \left(4\cdot\mathbf{1}_{\{\IBD(\tau)=0\}} - 4\cdot\mathbf{1}_{\{\IBD(\tau)=2\}},\;
2\cdot\mathbf{1}_{\{\IBD(\tau)=0\}} - 4\cdot\mathbf{1}_{\{\IBD(\tau)=2\}}\right)^\tsp\\
&=& S(v(\tau)),
\end{array}
\lb{psiMLS}
\end{equation}
where $S(v(\tau))$ is as in \eqref{Saff} for pedigree type 1, so that the
information matrices agree:
\begin{equation}
I(t) = \left(\begin{array}{cc} 8 & 4\\ 4 & 6 \end{array}\right).
\lb{I0MLS}
\end{equation}
From \eqref{XiMLS}, the tangent cone at $\xi_0$ of $\Xi - \xi_0$ is
$C = \{(c_1,c_2):\,c_2\le 0,\,2c_1\le c_2\}$, which in matrix form \re{Ct2} corresponds to $C=C_V$, with
\begin{equation}
V = \left(\begin{array}{cc} -2 & 1\\ 0 & -1\end{array}\right).
\lb{VMLS}
\end{equation}
Inserting \eqref{I0MLS} and \eqref{VMLS} into $\De(t) = A(t)^\tsp C = \De$
and the definition of $w_2$ in \re{w2DefLink}, and writing $\De = C_U$ with rows of $U$
normalized to unit length, one obtains
$$
U = \left(\begin{array}{cc} -0.5774 & 0.8165\\ 0 & -1.0000\end{array}\right),
\quad w_2 = 0.0980,
$$
in agreement with the affected sib pair entry of Table~\ref{Tab1}.

\newpage

\begin{figure}[htbp]
\begin{center}
\centerline{\resizebox{120mm}{!}{\includegraphics{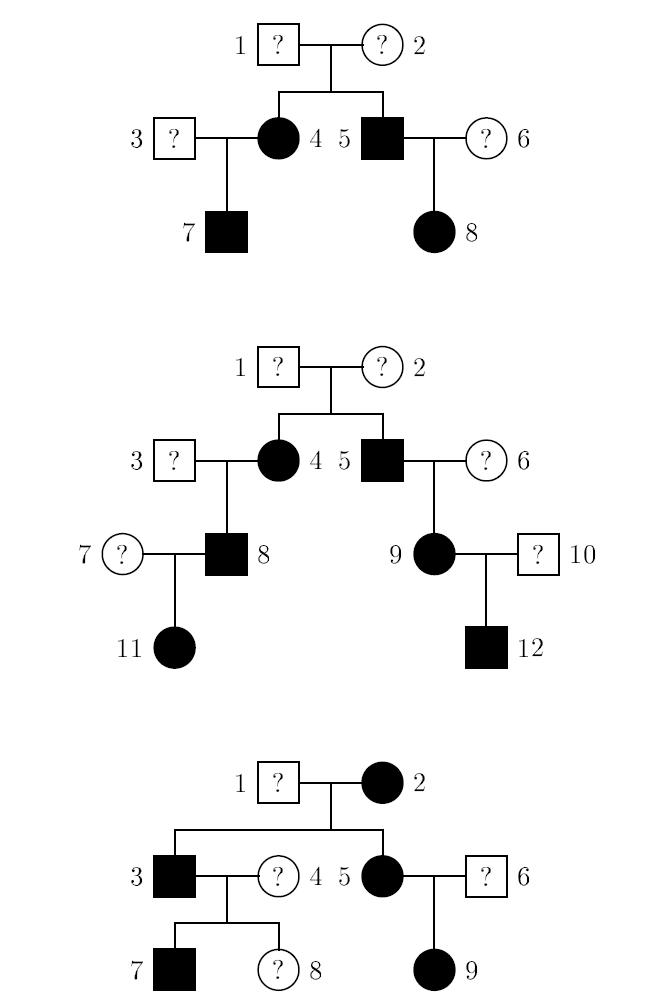}}}
\caption{Pedigree types $(\cP_j,\Phi_j)$ used in Tables \ref{Tab2}-\ref{Tab1}. For $j=1,2,3,4$, 
$\cP_j$ consists of two parents with unknown phenotypes and $k+1$ affected offspring.
(In particular, $(\cP_1,\Phi_1)$ is an affected sib pair.) $\cP_5$ (upper), $\cP_6$ (middle) and
$\cP_7$ (lower) are shown above with individual numbers. 
Males and females correspond to squares and circles, affected individuals have black and unaffected ones have
white symbols. Individuals with unknown phenotypes have question marks.}
\lb{Pedigrees}
\end{center}
\end{figure}

\newpage

\begin{table}
\caption{Matrices $\ka_l=(\ka_{l,uv})_{u,v=1}^2$ in \eqref{rhos} for the pedigree types
$(\cP_j,\Phi_j)$ of Figure~\ref{Pedigrees}. The upper triangular square root $A(t)^\tsp$ of $I(t)$ is used in \re{kappal}.}
\lb{Tab2}
\begin{center}
\begin{tabular}{|c|c|c|c|}
\hline
$j$ & $\ka_1$ & $\ka_2$ & $\ka_3$\\
\hline
\hline
1-4 & $\left(\begin{array}{cc} 0 & 0 \\ 0 & 0 \end{array}\right)$ &
$\left(\begin{array}{cc} 1 & 0 \\ 0 & 0 \end{array}\right)$ &
$\left(\begin{array}{cc} 0 & 0 \\ 0 & 0 \end{array}\right)$\\
\hline
5 &
$\left(\begin{array}{cc} 0 & 0 \\ 0 & 0 \end{array}\right)$ &
$\left(\begin{array}{cc} 0.8137 & 0.1976\\ 0.1976 & 0.0821\end{array}\right)$ &
$\left(\begin{array}{cc} 0.1765 & -0.1872\\-0.1872 & 0.1985\end{array}\right)$ \\
\hline
6 &
$\left(\begin{array}{cc} 0.1914 & -0.1375\\-0.1375 & 0.0988\end{array}\right)$ &
$\left(\begin{array}{cc} 0.5455 & 0.3265\\0.3265 & 0.2601\end{array}\right)$ &
$\left(\begin{array}{cc} 0.2161 & -0.1552\\-0.1552 & 0.1115\end{array}\right)$ \\
\hline
7 &
$\left(\begin{array}{cc} 0.1356 & -0.1246\\-0.1246 & 0.1144\end{array}\right)$ &
$\left(\begin{array}{cc} 0.7034 & 0.2725\\0.2725 & 0.1352\end{array}\right)$ &
$\left(\begin{array}{cc} 0.1525 & -0.1401\\-0.1401 & 0.1287\end{array}\right)$ \\
\hline
\hline
$j$ & $\ka_4$ & $\ka_5$ & $\ka_6$ \\
\hline
1-4 &
$\left(\begin{array}{cc} 0 & 0 \\ 0 & 1 \end{array}\right)$ &
$\left(\begin{array}{cc} 0 & 0 \\ 0 & 0 \end{array}\right)$ &
$\left(\begin{array}{cc} 0 & 0 \\ 0 & 0 \end{array}\right)$ \\
\hline
5 & $\left(\begin{array}{cc} 0.0098 & -0.0104\\-0.0104 & 0.7194\end{array}\right)$ &
$\left(\begin{array}{cc} 0 & 0 \\ 0 & 0 \end{array}\right)$ &
$\left(\begin{array}{cc} 0 & 0 \\ 0 & 0 \end{array}\right)$\\
\hline
6 &
$\left(\begin{array}{cc} 0.0425 & -0.0306\\-0.0306 & 0.5273\end{array}\right)$ &
$\left(\begin{array}{cc} 0.0043 & -0.0031\\-0.0031 & 0.0022\end{array}\right)$ &
$\left(\begin{array}{cc} 0.0002 & -0.0002\\ -0.0002 & 0.0001\end{array}\right)$ \\
\hline
7 &
$\left(\begin{array}{cc} 0.0085 & -0.0078\\-0.0078 & 0.6217\end{array}\right)$ &
$\left(\begin{array}{cc} 0 & 0 \\ 0 & 0 \end{array}\right)$ &
$\left(\begin{array}{cc} 0 & 0 \\ 0 & 0 \end{array}\right)$ \\
\hline
\end{tabular}
\end{center}
\end{table}

\newpage

\begin{center}
\begin{figure}[htbp]
\centerline{%
\begin{overpic}[width=150mm]{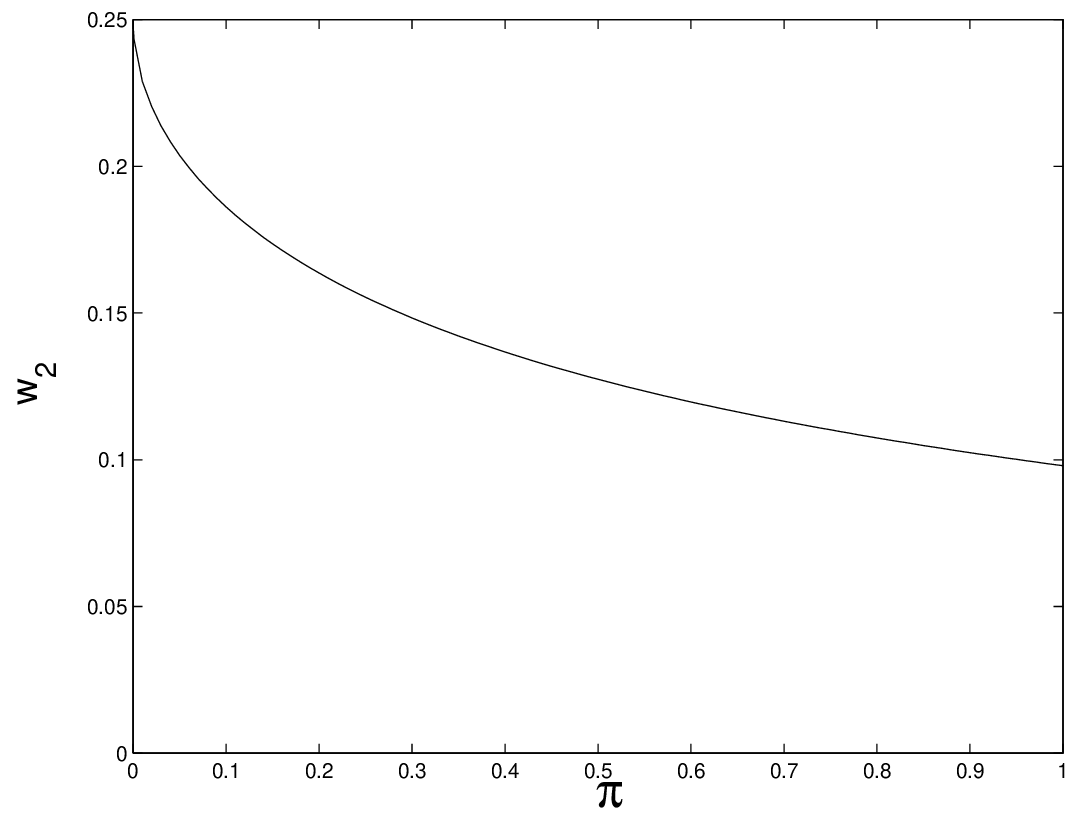}
  \put(55,0.6){\textcolor{white}{\rule{8mm}{5mm}}}
  \put(57.5,1.6){\makebox(0,0){\small Proportion}}
  \put(1,31){\textcolor{white}{\rule{8mm}{22mm}}}
  \put(5,40){\rotatebox{90}{\makebox(0,0){\small Weight}}}
\end{overpic}}
\caption{Plot of $\chi^2_2$-weight $w_2$ as function of the proportion $\beta$ of
affected sib pairs for a mixture of affected sib and first cousin pairs.}
\lb{Fig2}
\end{figure}
\end{center}

\newpage

\begin{table}
\caption{Values of Fisher information matrix $I(t)$, boundary region $\Delta=C_U$ and $\chi^2_2$-weight $w_2$ (see \re{w2DefLink}) for the pedigree types of Figure \ref{Pedigrees}. The upper triangular square root $A(t)^\tsp$ of $I(t)$ is used for calculating $U(t)=A(t)^{-\tsp}$. After normalizing the two rows of $U(t)$ to have unit length, the resulting matrix $U$ does not depend on $t$.}
\begin{center}
\begin{tabular}{|c|c|c|c|}
\hline
Pedigree type & $I(t)$ & $U$ & $w_2$ \\
\hline
\hline
$(\cP_1,\Phi_1)$ & $K^{-4}\cdot\left( \begin{array}{cc} 0.1250 & 0.1250\\ 0.1250 & 0.1875 \end{array}\right)$ 
& $\left( \begin{array}{cc} 0.5774 & -0.8165\\ 0 & 1.0000 \end{array} \right)$ & 0.0980 \\
\hline
$(\cP_2,\Phi_2)$ & $K^{-4}\cdot\left( \begin{array}{cc} 0.3750 & 0.3750\\0.3750 & 0.5625\end{array}\right)$
& $\left( \begin{array}{cc} 0.5774 & -0.8165\\ 0 & 1.0000 \end{array} \right)$
& 0.0980\\
\hline
$(\cP_3,\Phi_3)$ & $K^{-4}\cdot\left( \begin{array}{cc}0.7500 & 0.7500\\0.7500 & 1.1250 \end{array} \right)$
& $\left( \begin{array}{cc} 0.5774 & -0.8165\\ 0 & 1.0000 \end{array} \right)$
& 0.0980\\
\hline
$(\cP_4,\Phi_4)$ & $K^{-4}\cdot\left( \begin{array}{cc} 1.2500 & 1.2500\\1.2500 & 1.8750\end{array} \right)$
& $\left( \begin{array}{cc} 0.5774 & -0.8165\\ 0 & 1.0000 \end{array} \right)$
& 0.0980\\
\hline
$(\cP_5,\Phi_5)$ & $K^{-4}\cdot\left( \begin{array}{cc} 0.7969 & 0.2813\\ 0.2813 & 0.1875 \end{array} \right)$
& $\left( \begin{array}{cc} 0.6860 & -0.7276 \\0 & 1.0000\end{array} \right)$
& 0.1203\\
\hline
$(\cP_6,\Phi_6)$ & $K^{-4}\cdot\left( \begin{array}{cc} 2.2959 & 0.3828\\0.3828 & 0.1875\end{array}\right)$
& $\left( \begin{array}{cc} 0.8121 & -0.5835\\0 & 1.0000\end{array} \right)$
& 0.1508\\
\hline
$(\cP_7,\Phi_7)$ & $K^{-4}\cdot\left( \begin{array}{cc} 0.9219 & 0.2813\\0.2813 & 0.1875\end{array} \right)$
& $\left( \begin{array}{cc} 0.7365 & -0.6765\\0 & 1.0000\end{array} \right)$
& 0.1318\\
\hline
\end{tabular}
\end{center}
\lb{Tab1}
\end{table}
\end{appendix}

\end{document}